\newcommand{\Char}{\operatorname{Char}}
\newcommand{\scal}[2]{\langle #1,#2\rangle}
\newcommand{\rr}[1]{\mathbf R^{#1}}
\newcommand{\nm}[2]{\Vert #1\Vert _{#2}}
\newcommand{\nmm}[1]{\Vert #1\Vert }
\newcommand{\abp}[1]{\vert #1\vert}
\newcommand{\op}{\operatorname{Op}}
\newcommand{\sets}[2]{\{ \, #1\, ;\, #2\, \} }
\newcommand{\ep}{\varepsilon}
\newcommand{\fy}{\varphi}
\newcommand{\cdo}{\, \cdot \, }
\newcommand{\supp}{\operatorname{supp}}
\newcommand{\eabs}[1]{\langle #1\rangle}     
\newcommand{\vrum}{\vspace{0.1cm}}
\newcommand{\ttbigcap}{{\textstyle{\, \bigcap \, }}}
\newcommand{\ttbigcup}{{\textstyle{\, \bigcup \, }}}
\DeclareMathOperator{\WF}{WF}
\numberwithin{equation}{section}          
\newtheorem{thm}{Theorem}
\numberwithin{thm}{section}
\newtheorem*{tom}{\rubrik}
\newcommand{\rubrik}{}
\newtheorem{prop}[thm]{Proposition}
\newtheorem{cor}[thm]{Corollary}
\newtheorem{lemma}[thm]{Lemma}
\theoremstyle{definition}
\newtheorem{defn}[thm]{Definition}
\newtheorem{example}[thm]{Example}
\theoremstyle{remark}
\newtheorem{rem}[thm]{Remark}              
\author{Stevan Pilipovi\' c}
\address{Department of Mathematics and Informatics,
University of Novi Sad, Novi Sad, Serbia}
\email{stevan.pilipovic@dmi.uns.ac.rs}
\author{Nenad Teofanov}
\address{Department of Mathematics and Informatics,
University of Novi Sad, Novi Sad, Serbia}
\email{nenad.teofanov@dmi.uns.ac.rs}
\author{Joachim Toft}
\address{Department of Mathematics and Systems Engineering,
V{\"a}xj{\"o} University, Sweden}
\email{joachim.toft@vxu.se}
\title{\textbf {Micro-local analysis with Fourier Lebesgue
spaces. Part I}}
\keywords{Wave-front sets, Fourier-Lebesgue spaces, modulation spaces,
pseudo-differential operators, micro-local analysis}
\subjclass[2000]{35A18,35S30,42B05,35H10}
\begin{document}

\begin{abstract}
Let $\omega ,\omega _0$ be appropriate weight functions and $q\in
[1,\infty ]$. We introduce the wave-front set, $\WF _{\mathscr
FL^q_{(\omega )}}(f)$ of $f\in \mathscr S'$ with respect to weighted
Fourier Lebesgue space $\mathscr FL^q_{(\omega )}$. We prove that
usual mapping properties for pseudo-differential operators $\op (a)$
with symbols $a$ in $S^{(\omega _0)}_{\rho ,0}$ hold for such
wave-front sets. Especially we prove that 
\begin{multline}\tag*{(*)}
\WF  _{\mathscr
FL^q_{(\omega /\omega _0)}}(\op (a)f)\subseteq \WF _{\mathscr
FL^q_{(\omega )}}(f)
\\
\subseteq \WF _{\mathscr FL^q_{(\omega
/\omega _0)}}(\op (a)f)\ttbigcup \Char (a).
\end{multline}
Here $\Char (a)$ is the set of
characteristic points of $a$.
\end{abstract}

\maketitle

\section{Introduction}\label{sec0}

\par

In this paper we introduce
wave-front sets of appropriate Banach (and Fr\'echet)
spaces. We especially consider the case when these Banach spaces
are Fourier-Lebesgue type spaces. The family of such
wave-front sets contains the wave-front sets of Sobolev type,
introduced by H{\"o}rmander in \cite {Hrm-nonlin}, as well as the
classical wave-front sets with respect to smoothness (cf. Sections 8.1
and 8.2 in \cite {Ho1}), as special cases. Roughly speaking, for any
given distribution $f$ and for appropriate Banach (or Frech\'et) space
$\mathcal B$ of tempered distributions, the wave-front set
$\WF  _{\mathcal B}(f)$ of $f$ consists of all pairs
$(x_0,\xi _0)$ in $\rr d\times (\rr d\setminus 0)$ such that no
localizations of the distribution at $x_0$ belongs to $\mathcal B$ in
the direction $\xi _0$.

\par

We also establish mapping properties for a quite general class of
pseudo-differential operators on such wave-front sets, and show that
our approach leads to a flexible micro-local analysis tools which
fits well to the most common approach developed in e.g. \cite {Ho1,
Hrm-nonlin}. Especially we prove that usual mapping properties,
which are valid for classical wave-front sets (cf. Chapters VIII and
XVIII in \cite{Ho1}) also hold for wave-front sets of
Fourier-Lebesgue type. For example, we prove (*) in the abstract,
that is, any operator $\op (a)$ to some extent shrink the wave-front
sets and the opposite embedding can be obtained by including $\Char
(a)$, the set of characteristic points of the operator symbol $a$.

\par

The symbol classes for the pseudo-differential operators are of the
form $S_{\rho ,\delta}^{(\omega _0)}(\rr {2d})$ which consists of all
smooth functions $a$ on $\rr {2d}$ such that $a/\omega _0\in S^0_{\rho
,\delta}(\rr {2d})$. Here $\rho ,\delta \in \mathbf R$ and $\omega_0$ is
an appropriate smooth function on $\rr
{2d}$. We note that $S_{\rho ,\delta}^{(\omega _0)}(\rr {2d})$ agrees
with the H{\"o}rmander class $S^r_{\rho ,\delta}(\rr {2d})$ when
$\omega _0(x, \xi )=\eabs \xi ^r$, where $r\in \mathbf R$ and $\eabs \xi
=(1+|\xi |^2)^{1/2}$.

\par

The set of characteristic points $\Char (a)$ of $a\in S_{\rho
,\delta}^{(\omega_0 )}$ depends on the choices of $\rho$, $\delta$
and $\omega_0$ (see Definition \ref{defchar}). This set is empty when
$a$ satisfies a local ellipticity condition with respect to
$\omega_0$. In contrast to Section 18.1 in \cite{Ho1}, $\Char (a)$ is
defined for all symbols in $S_{\rho ,\delta}^{(\omega_0 )}$, and not
only for polyhomogeneous symbols. Furthermore, if $a$ is a
polyhomogeneous symbol, then $\Char (a)$ is smaller than the set of
characteristic points, given by \cite{Ho1} (see Remark
\ref{compchar} and Example \ref{examplheat}). This is especially
demonstrated for a broad class of hypoelliptic partial differential
operators. For any hypoelliptic operator $\op (a)$ with constant
coefficients and with symbol $a$, we may choose the symbol class
such that it contains $a$, and such that $a$ is elliptic with
respect to that weight. Consequently, $\Char (a)$ is empty, and in
view of (*) in the abstract it follows that such hypoelliptic
operators preserve the wave-front sets, as they should (see Theorems
\ref{hypoellthm} and \ref{thmclassicWFs}, and Corollary
\ref{cormainthm23B}).

\medspace

Information on regularity in the background of wave-front sets of
Fourier Lebesgue types might be more detailed comparing to classical
wave-front sets, because we may play with the exponent $q\in
[1,\infty]$ and the weight function $\omega$ in our choice of
Fourier Lebesgue space $\mathscr FL^q_{(\omega )}(\rr d)$. By
choosing $q=1$ and $\omega (\xi )=\eabs \xi ^N$, where $N\ge 0$ is
an integer, $\mathscr FL^1_{(\omega )}(\rr d)$
locally contains $C^{N+d+1}(\rr d)$, and is contained in $C^N(\rr
d)$. Consequently, our wave-front sets can be used to investigate
micro-local properties which, in some sense, are close to
$C^N$-regularity.

\par

Another example is obtained by choosing $q=\infty$ and $\omega =\omega
_0$. If $E$ is a parametrix to a pseudo-differential operator $\op
(a)$ with $a\in S^{(\omega _0)}_{\rho ,0}$, then
$$
\op (a)E=\delta _0+\fy,
$$
which belongs locally to $\mathscr FL^\infty$, giving that $\WF
_{\mathscr 
FL^\infty}(\op (a)E)$ is empty. Hence  (*) in the abstract shows
that $\WF _{\mathscr FL^\infty _{(\omega )}}(E)$ is contained in
$\Char (a)$. In particular, if  in addition $\op (a)$
is elliptic with respect to $\omega _0$, then it follows that
$\WF _{\mathscr FL^\infty _{(\omega
)}}(E)$ is empty, or equivalently, $E$ is locally in $\mathscr
FL^\infty _{(\omega )}$. This implies that for each $x\in \rr d$ and
test function $\fy$ on $\rr d$ we have
\begin{equation}\label{huvaligen}
|\mathscr F(\fy E)(\xi )|\le C\omega (x,\xi )^{-1},
\end{equation}
for some constant $C$. Here we remark that every hypoelliptic partial
differential operator with constant coefficients is elliptic with
respect to some admissible weight $\omega _0$. Therefore, our results
can be applied in efficient ways on such operators. (See Theorem
\ref{hypoellthm} and Corollary \ref{cormainthm23B} for the details.)

\medspace

In the second part of the paper (Sections
\ref{sec6} and \ref{sec7}) we define wave-front sets with respect to
(weighted) modulation spaces (which also involve certain types of
Wiener amalgam spaces), and prove that they coincide with the
wave-front sets of
Fourier Lebesgue type. Here we also extend some wave-front
results to pseudo-differential operators with symbols which are
defined in terms of modulation spaces  of "weighted Sj{\"o}strand
type". These symbol classes  are superclasses to $S_{\rho
,0}^{(\omega _0)}(\rr {2d})$, and contain non-smooth symbols.

\par

The modulation spaces have been introduced by Feichtinger in
\cite{F1}, and the theory was developed further and generalized in
\cite{Feichtinger3,Feichtinger4,Feichtinger5,Grochenig0a}. The
modulation space $M^{p,q}_{(\omega )}(\rr d)$, where $\omega$ denotes
a weight function on phase (or time-frequency) space $\rr {2d}$,
is the set of tempered (ultra-) distributions
whose short-time Fourier transform belongs to the weighted and mixed
Lebesgue space $L^{p,q}_{(\omega )}(\rr {2d})$. It follows that
 the weight $\omega$ quantifies the degrees of
asymptotic decay and singularity of the distributions.

\par

Modulation spaces have been, in parallel,
incorporated into the calculus of pseudo-differential operators,
through the study of continuity of (classical)
pseudo-differential operators acting on modulation spaces (cf.
\cite{Tachizawa1,Czaja,Pilipovic2,Pilipovic3,Teofanov1,Teofanov2}),
as well as through the analysis of  operators of non-classical type,
where modulation spaces are used as symbol classes.
For example, after a systematic development of the modulation space
theory already had been done by Feichtinger and Gr{\"o}chenig,
Sj{\"o}strand introduced in \cite{Sjostrand1}
a superspace of $S^0_{0,0}$ which turned out to coincide with
$M^{\infty,1}$, and used this modulation space
as a symbol class. He proved that $M^{\infty,1}$ as symbol class
corresponds to an algebra of operators which are bounded on $L^2$.
Sj{\"o}strand's results were thereafter further extended in
\cite{Grochenig0,Gro-book,Grochenig1b,Toft2,To8,Toft4}.

\medspace

The paper is organized as follows. In the beginning of Section
\ref{sec1} we  recall the definition of (weighted) Fourier Lebesgue
spaces. We continue with the definition and basic properties of
pseudo-differential operators in Subsection \ref{subsec1}. Then, in
Subsection \ref{subsec2} we define sets of characteristic points for a
broad class of pseudo-differential operators and prove that these sets
might be smaller than characteristic sets in \cite{Ho1} (see also
Example \ref{examplheat} in Section \ref{sec3}). In Subsection
\ref{subsec3} we recall the definition and basic properties of
modulation spaces, and, in Subsection \ref{subsec4} we introduce a
class of pseudo-differential operators with non-smooth symbols in the
context of modulation spaces.
In Section \ref{sec2} we define wave-front sets with respect to
(weighted) Fourier Lebesgue spaces, and prove some important
properties for such  wave-front sets. Thereafter we
consider in Section \ref{sec3}  mapping properties
for pseudo-differential operators in context of these wave-front
sets, and, in particular,  we prove (*)  in the abstract.

\par

In Section \ref{sec5} we consider wave-front sets obtained from
sequences of Fourier Lebesgue space spaces. We
show that these types of wave-front sets contain classical
wave-front sets (with respect to smoothness), and that the mapping
properties for pseudo-differential operators also hold in context
of such wave-front sets. In particular, we recover the well-known
property (*) in the abstract, for usual wave-front sets
(cf. Section 18.1 in \cite{Ho1}).

\par

In Section \ref{sec6} we introduce wave-front sets with respect to
modulation spaces, and prove that they
coincide with wave-front sets of Fourier Lebesgue types. In Section
\ref{sec7} we consider mapping properties on wave-front sets of
pseudo-differential operators with symbol classes defined in terms of
weighted Sj{\"o}strand classes.

\par

Finally, we remark that the present paper is the first one in series
of papers. In the second paper \cite{PTT2} we consider products in
Fourier Lebesgue spaces, related to the new notion of wave-fronts
with applications to a class of semilinear partial differential
equations. In \cite{JPTT} the authors together with Karoline
Johansson show that the wave-front sets of Fourier Lebesgue and
modulation space types can be discretized,
and how they can be implemented in numerical computations.

\par

\section*{Acknowledgement}

\par

The authors are thankful to Professor Hans Feichtinger for his careful
reading of the manuscript, and for his constructive critisism which
lead to several improvements of the paper.

\par

In earlier versions of the paper, only the
case $\delta =0$ in $S_{\rho ,\delta}^{(\omega )}$ was
considered. The authors are grateful to Professor
Gianluca Garello who suggested to also consider the case
$\delta \geq 0$.

\par

\section{Preliminaries}\label{sec1}

\par

In this section we recall some notations and basic results. In what
follows we let $\Gamma$ denote an open cone in $\rr d\setminus 0$ with
vertex at origin. If $\xi \in \rr d\setminus 0$ is fixed, then an open
cone which contains $\xi $ is sometimes denoted by $\Gamma_\xi$.

\par

Assume that $\omega$ and $v$ are positive and measurable functions
on $\rr d$. Then $\omega$ is called $v$-moderate if
\begin{equation}\label{moderate}
\omega (x+y) \leq C\omega (x)v(y)
\end{equation}
for some constant $C$ which is independent of $x,y\in \rr d$. If $v$
in \eqref{moderate} can be chosen as a polynomial, then $\omega$ is
called polynomially moderated. We let $\mathscr P(\rr d)$ be the set
of all polynomially moderated functions on $\rr d$. If $\omega (x,\xi
)\in \mathscr P(\rr {2d})$ is constant with respect to the
$x$-variable ($\xi$-variable), then we sometimes write $\omega (\xi )$
($\omega (x)$) instead of $\omega (x,\xi )$. In this case we consider
$\omega$ as an element in $\mathscr P(\rr {2d})$ or in $\mathscr P(\rr
d)$ depending on the situation.

\par

We also need to consider classes of weight functions, related to
$\mathscr P$. More precisely, we let $\mathscr P_0(\rr d)$ be the set
of all $\omega \in \mathscr P(\rr d)\bigcap C^\infty (\rr d)$ such
that $\partial ^\alpha \omega /\omega \in L^\infty$ for all
multi-indices $\alpha$.
For each $\omega \in \mathscr P(\rr d)$, there is an equivalent
weight $\omega _0\in \mathscr P_0(\rr d)$, that is,
$C^{-1}\omega _0\le \omega \le C\omega _0$ holds for some constant
$C$ (cf. \cite[Lemma 1.2]{To8}).

\par

Assume that $\rho ,\delta \in \mathbf R$. Then we let $\mathscr
P_{\rho ,\delta}(\rr {2d})$ be the set of all $\omega (x,\xi )$ in
$\mathscr P(\rr {2d})\cap C^\infty (\rr {2d})$ such that
$$
\eabs \xi ^{\rho |\beta |-\delta |\alpha |}\frac {\partial ^\alpha
_x\partial ^\beta _\xi \omega (x,\xi )}{\omega (x,\xi )}\in L^\infty
(\rr {2d}),
$$
for every multi-indices $\alpha$ and $\beta$. Note that in contrast to
$\mathscr P_0$, we do not have an equivalence between $\mathscr
P_{\rho ,\delta}$ and $\mathscr P$ when $\rho >0$. On the other hand,
if $s\in \mathbf R$ and $\rho \in [0,1]$, then $\mathscr P_{\rho
,\delta} (\rr {2d})$ contains $\omega (x,\xi )=\eabs \xi ^s$, which
are one of the most important classes in the applications.

\medspace

The Fourier transform $\mathscr F$ is the linear and continuous
mapping on $\mathscr S'(\rr d)$ which takes the form
$$
(\mathscr Ff)(\xi )= \widehat f(\xi ) \equiv (2\pi )^{-d/2}\int _{\rr
{d}} f(x)e^{-i\scal  x\xi }\, dx
$$
when $f\in L^1(\rr d)$. The map $\mathscr F$ is a homeomorphism
on $\mathscr S'(\rr d)$ which restricts to a homeomorphism on $\mathscr
S(\rr d)$ and to a unitary operator on $L^2(\rr d)$.

\medspace

Let $q\in [1,\infty ]$ and $\omega \in \mathscr P(\rr
{d})$. The (weighted) Fourier Lebesgue space $\mathscr
FL^q_{(\omega )}(\rr d)$ is the inverse Fourier image of
$L^q _{(\omega )} (\rr d)$, i.{\,}e. $\mathscr FL^q_{(\omega )}(\rr d)$
consists of all $f\in \mathscr S'(\rr d)$ such that
\begin{equation}\label{FLnorm}
\nm f{\mathscr FL^{q}_{(\omega )}} \equiv \nm {\widehat f\cdot \omega
}{L^q} .
\end{equation}
is finite. If $\omega =1$, then the notation $\mathscr FL^q$
is used instead of $\mathscr FL^q_{(\omega )}$. We note that if
$\omega (\xi )=\eabs \xi ^s$, then $\mathscr
FL^{q}_{(\omega )}$ is the Fourier image of the Bessel potential space
$H^p_s$ (cf. \cite{BL}).

\par

Here and in what follows we use the notation $\mathscr F L^q_{(\omega
)}$ instead of the less cumbersome
$\mathscr F L^q_\omega $, because in forthcoming papers
(cf. \cite{JPTT, PTT2}), we often assume that $\omega $ is of the
particular form $\omega (\xi )=\eabs \xi ^s$, and in this case we set
$\mathscr F L^q_s =\mathscr F L^q_{(\omega )}$ without brackets for
the weight parameter.

\par

\begin{rem} \label{x-independence}
In many situations it is convenient to permit an $x$
dependency for the weight $\omega$ in the definition of Fourier
Lebesgue spaces. More precisely, for each $\omega \in \mathscr P(\rr
{2d})$ we let $\mathscr FL^q_{(\omega )}$ be the set of all $f\in
\mathscr S'(\rr d)$ such that
$$
\nm f{\mathscr FL^q_{(\omega )}}
\equiv \nm {\widehat f\, \omega (x,\cdo )}{L^q}
$$
is finite. Since $\omega$ is
$v$-moderate for some $v\in \mathscr P(\rr {2d})$ it follows that
different choices of $x$ give rise to equivalent norms.
Therefore the condition
$\nm f{\mathscr FL^{q}_{(\omega )}}<\infty$ is
 independent of $x$, and  it follows that $\mathscr FL^q_{(\omega )}(\rr
d)$ is  independent of $x$ although $\nm \cdo {\mathscr
FL^{q}_{(\omega )}}$ might depend on $x$.
\end{rem}

\par

\subsection{Pseudodifferential operators} \label{subsec1}

In this subsection we recall some facts from Chapter XVIII in \cite {Ho1}
concerning pseudo-differential operators. Let $a\in
\mathscr S(\rr {2d})$, and let $t\in \mathbf R$ be fixed. Then
the pseudo-differential operator $\op _t(a)$ which corresponds to the
symbol $a$ is the linear and
continuous operator on $\mathscr S(\rr d)$, defined by
\begin{equation}\label{e0.5}
(\op _t(a)f)(x)
\\
=
(2\pi ) ^{-d}\iint a((1-t)x+ty,\xi )f(y)e^{i\scal {x-y}\xi }\,
dyd\xi .
\end{equation}
If  $t=0$, then $\op _t(a)$ is the Kohn-Nirenberg
representation $a(x,D)=\op (a)=\op _0(a)$, and if $t=1/2$, then $\op
_t(a)$ is the Weyl quantization of $a$.

\par

For general $a\in \mathscr S'(\rr {2d})$, the pseudo-differential
operator $\op _t(a)$ is defined as the operator with distribution
kernel
\begin{equation}\label{weylkernel}
K_{t,a}(x,y)=(2\pi )^{-d/2}(\mathscr F_2^{-1}a)((1-t)x+ty,x-y).
\end{equation}
Here $\mathscr F_2F$ is the partial Fourier transform of
$F(x,y)\in \mathscr S'(\rr{2d})$ with respect to the $y$-variable.
We remark that $K_{t,a}$ makes sense as a distribution in $\mathscr
S'(\rr {2d})$. In fact, the map
$$
F(x,y)\mapsto F((1-t)x+ty,x-y)
$$
is obviously a homeomorphism on $\mathscr S(\rr {2d})$  and on
$\mathscr S'(\rr {2d})$. Furthermore,
by straight-forward computations it follows that the partial Fourier
transform $\mathscr F_2$ is a homeomorphism on $\mathscr S(\rr {2d})$,
and extends in the usual way to a homeomorphism on $\mathscr S'(\rr
{2d})$ which is unitary on $L^2$ (cf. e.{\,}g. Section 7.1 in
\cite{Ho1}). Consequently, if $a\in \mathscr
S'(\rr {2d})$, then $K_{t,a}$ in
\eqref{weylkernel} makes sense as a tempered distribution, and $\op
_t(a)$ is a continuous operator from
$\mathscr S (\rr d)$ to $\mathscr S'(\rr d)$. As a consequence of
Schwartz kernel theorem it follows that the map $a\mapsto \op _t(a)$
is bijective from $\mathscr S'(\rr {2d})$ to the set of linear and
continuous operators from $\mathscr S (\rr d)$ to $\mathscr S'(\rr
d)$.

\par

We also note that $K_{t,a}$ belongs to $\mathscr S(\rr {2d})$, if and
only if $a\in \mathscr S(\rr {2d})$, and that the latter definition of
$\op _t(a)$ agrees with the operator in \eqref{e0.5} when $a\in
\mathscr S(\rr {2d})$.

\par

If $a\in \mathscr S'(\rr {2d})$ and $s,t\in
\mathbf R$, then there is a unique $b\in \mathscr S'(\rr {2d})$ such
that $\op _s(a)=\op _t(b)$. By straight-forward applications of
Fourier's inversion  formula, it follows that
\begin{equation}\label{pseudorelation}
\op _s(a)=\op _t(b) \quad \Longleftrightarrow \quad b(x,\xi
)=e^{i(t-s)\scal
{D_x}{D_\xi}}a(x,\xi ).
\end{equation}
(Cf. Section 18.5 in \cite{Ho1}.)

\par

Next we discuss symbol classes which will be used in the sequel.
Let $ \rho, \delta \in \mathbf R$ and let $\omega _0 \in \mathscr P_{\rho
,\delta } (\rr {2d})$. Then the symbol class
$S_{\rho ,\delta}^{(\omega _0 )}(\rr {2d})$ consists of all $a\in
C^\infty (\rr {2d})$ such that
\begin{equation}\label{Somegadef}
|\partial _x^\alpha \partial _\xi ^\beta a(x,\xi )|\le
C_{\alpha ,\beta }\omega _0 (x,\xi )\eabs \xi ^{-\rho |\beta
|+\delta |\alpha |}.
\end{equation}
It is clear that $S_{\rho ,\delta}^{(\omega _0 )}$ is a Frech\'et space
with semi-norms given by the smallest constant which can be used in
\eqref{Somegadef}.

\par

If $\omega _0 (x,\xi )=\eabs \xi^r$, then
$S^{(\omega _0 )}_{\rho ,\delta} (\rr {2d})$ agrees with the
H{\"o}rmander class $S^r_{\rho ,\delta}(\rr {2d})$.
Usually it is assumed that $0\le \delta < \rho \le 1$.

\par

The following result shows that pseudo-differential operators with
symbols in $S^{(\omega _0 )}_{\rho ,\delta}$ behave well.

\par

\begin{prop}
Let $a\in S^{(\omega _0 )}_{\rho ,\delta} (\rr {2d})$
where $\omega _0 \in \mathscr P_{\rho ,\delta}
(\rr {2d})$,
 $\rho ,\delta \in [0,1]$, $0\le \delta \le
\rho \le 1$ and $\delta <1$. Then $\op _t(a)$ is continuous on
$\mathscr S(\rr d)$ and extends uniquely to a continuous operator on
$\mathscr S'(\rr d)$.
\end{prop}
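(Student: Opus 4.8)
The plan is to prove that $\op _t(a)$ maps $\mathscr S(\rr d)$ continuously into itself for every $t\in\mathbf R$, and then to obtain the statement on $\mathscr S'(\rr d)$ by transposition. Granting the $\mathscr S$-continuity, we first observe that the transpose of $\op _t(a)$ (with respect to the bilinear duality between $\mathscr S(\rr d)$ and $\mathscr S'(\rr d)$) equals $\op _{1-t}(\check a)$, where $\check a(x,\xi )=a(x,-\xi )$; indeed, by \eqref{weylkernel} the kernels of the two operators are obtained from one another simply by interchanging the two arguments. Now $\check a\in S^{(\check\omega _0)}_{\rho ,\delta}(\rr {2d})$ with $\check\omega _0(x,\xi )=\omega _0(x,-\xi )$, and $\check\omega _0\in\mathscr P_{\rho ,\delta}(\rr {2d})$ since the conditions defining $\mathscr P_{\rho ,\delta}$ are invariant under $\xi\mapsto -\xi$. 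Hence the $\mathscr S$-continuity applies to $\op _{1-t}(\check a)$ as well, and the transpose of this continuous operator is a continuous operator on $\mathscr S'(\rr d)$ which restricts to $\op _t(a)$ on $\mathscr S(\rr d)$; this extension is unique because $\mathscr S(\rr d)$ is dense in $\mathscr S'(\rr d)$.

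It thus remains to prove the $\mathscr S$-continuity. Fix $f\in\mathscr S(\rr d)$. For $a\in S^{(\omega _0)}_{\rho ,\delta}$ the integral in \eqref{e0.5} need not converge absolutely, so one regularises it in the standard way (inserting a cut-off $\chi (\ep\xi )$ with $\chi\in C_c^\infty$, $\chi (0)=1$) and then integrates by parts using the identities $\eabs\xi ^{-2N}(1-\Delta _y)^Ne^{i\scal {x-y}\xi }=e^{i\scal {x-y}\xi }$ and $\eabs {x-y}^{-2M}(1-\Delta _\xi )^Me^{i\scal {x-y}\xi }=e^{i\scal {x-y}\xi }$, transferring the differential operators onto $a((1-t)x+ty,\xi )f(y)$ and letting $\ep\to 0$. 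A factor $\partial _{\xi _j}$ hitting $a$ gains $\eabs\xi ^{-\rho }$ with $\rho\ge 0$ and is harmless; a factor $\partial _{y_j}$ hitting $a$ produces $t\,(\partial _{x_j}a)((1-t)x+ty,\xi )$, bounded by a constant times $\omega _0(\cdo ,\xi )\eabs\xi ^{\delta }$, while the $y$-derivatives hitting $f$ simply remain as derivatives of $f$ in the integrand. Using in addition the polynomial bound $\omega _0(x,\xi )\le C\eabs x^{N_0}\eabs\xi ^{N_0}$ (valid since $\omega _0\in\mathscr P(\rr {2d})$), one arrives, for each choice of $N$ and $M$, at an absolutely convergent representation of $(\op _t(a)f)(x)$ whose integrand is dominated by a constant times $\eabs\xi ^{2N\delta -2N+N_0}\,\eabs {x-y}^{-2M}\,\eabs x^{N_0}$ times a finite sum of $|\partial ^\gamma f(y)|$, the constant depending only on $N$, $M$ and finitely many of the semi-norms \eqref{Somegadef} of $a$.

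The decisive point is integrability in $\xi$, and this is where the hypothesis $\delta <1$ enters: it permits $N$ to be taken so large that $2N\delta -2N+N_0<-d$. To check that $\op _t(a)f\in\mathscr S(\rr d)$ one differentiates under the integral sign: an $x$-derivative either falls on $a((1-t)x+ty,\xi )$, producing a symbol derivative of the same type (absorbed by enlarging $N$), or on the exponential, producing one extra power of $\xi $ (again absorbed by enlarging $N$, which is possible precisely because $\delta <1$); a factor $x^\alpha $ is handled by writing $x=(x-y)+y$, so that $\eabs {x-y}^{|\alpha |}$ is controlled by enlarging $M$ and $\eabs y^{|\alpha |}$ is absorbed, together with the leftover $\eabs y$-powers, into the Schwartz semi-norms of $f$. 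Keeping track of the constants shows that every Schwartz semi-norm of $\op _t(a)f$ is bounded by a finite sum of Schwartz semi-norms of $f$ with a constant depending only on finitely many of the semi-norms \eqref{Somegadef} of $a$, which is exactly the asserted continuity.

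The obstacle here is organisational rather than conceptual: one has to arrange the iterated integrations by parts so that the loss $\eabs\xi ^{\delta }$ incurred whenever a $y$-derivative (or an $x$-derivative coming from differentiating the output) hits the symbol is always outweighed by the gain $\eabs\xi ^{-2}$ from a pair of $y$-derivatives, uniformly while one simultaneously multiplies by polynomials in $x$ so as to reach every Schwartz semi-norm. We note in passing that in this argument only the hypothesis $\delta <1$ is genuinely used (together with $\rho\ge 0$); the conditions $\delta\le\rho$ and $\rho\le 1$ play no role for the present statement.
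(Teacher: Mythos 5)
Your proof is correct, and it takes a genuinely different route from the paper's. The paper's proof identifies $S^{(\omega _0)}_{\rho ,\delta}(\rr {2d})$ with the Weyl--H{\"o}rmander class $S(\omega _0,g_{\rho ,\delta})$, verifies that the metric $g_{\rho ,\delta}$ is slowly varying, $\sigma$-temperate and satisfies $g_{\rho ,\delta}\le g_{\rho ,\delta}^\sigma$, and that $\omega _0$ is admissible for it, and then simply quotes Proposition 18.5.10 and Theorem 18.6.2 of \cite{Ho1}. Your argument is instead self-contained: oscillatory-integral regularisation, the two standard integrations by parts, and duality via the identity $\op _t(a)^T=\op _{1-t}(\check a)$ with $\check a(x,\xi )=a(x,-\xi )$ (which you justify correctly from \eqref{weylkernel}, and $\check \omega _0\in \mathscr P_{\rho ,\delta}$ does hold since the defining conditions are invariant under $\xi \mapsto -\xi$). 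What the paper's route buys is brevity and the simultaneous availability of the composition calculus that is used later (e.g. in Lemma \ref{pseudolemma3}); what your route buys is transparency about which hypotheses matter: the metric verification genuinely uses $0\le \delta \le \rho \le 1$ (for instance $g\le g^\sigma$ requires $\delta \le \rho$), whereas your direct estimate shows, as you note, that $\delta <1$ together with $\rho \ge 0$ already suffices for the $\mathscr S$- and $\mathscr S'$-continuity. The only places where you are brief --- the bookkeeping of the iterated integrations by parts and the passage $\ep \to 0$ by dominated convergence --- are standard and are correctly signposted, so I see no gap.
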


\par

\begin{proof}
We have $S_{\rho ,\delta}^{(\omega _0 )}(\rr {2d})=S(\omega _0
,g_{\rho,\delta})$, when $g=g_{\rho ,\delta}$ is the Riemannian metric
on $\rr {2d}$, defined by the formula
$$
\big (g_{\rho ,\delta }\big )_{(y,\eta )}(x,\xi ) = \eabs \eta
^{2\delta}|x|^2 + \eabs \eta ^{-2\rho}|\xi |^2
$$
(cf. Section 18.4--18.6 in \cite{Ho1}).
>From the assumptions it follows that $g_{\rho ,\delta}$ is slowly
varying, $\sigma$-temperate and satisfies $g_{\rho ,\delta}\le
g_{\rho ,\delta}^\sigma$, and that $\omega _0$ is $g_{\rho
,\delta}$-continuous and $\sigma$, $g_{\rho ,\delta}$-temperate (cf.
e.{\,}g. Sections 18.4--18.6 in \cite{Ho1} for definitions). The
result is now a consequence of Proposition 18.5.10 and Theorem
18.6.2 in \cite{Ho1}. The proof is complete.
\end{proof}

\par

\subsection{Sets of characteristic points} \label{subsec2}

\par

In this subsection we define the set of characteristic points of a symbol
$a\in S^{(\omega _0 )}_{\rho ,\delta} (\rr {2d})$, when $\omega _0 \in
\mathscr P_{\rho ,\delta}(\rr {2d})$ and $0\le \delta < \rho \le
1$. As remarked in the introduction, this definition is slightly
different comparing to \cite[Definition 18.1.5]{Ho1} in view of Remark
\ref{compchar} below.

\par

Let $R>0$, $X\subseteq \rr d$ be open and let $\Gamma \subseteq \rr
d\setminus 0$ be an open cone. For convenience we say that an element $c\in
S^0_{\rho ,\delta}(\rr {2d})$ is \emph{$(X,\Gamma ,R)$-unitary} when
$$
c(x,\xi )=1\quad \text{when}\quad x\in X,\ \xi \in \Gamma\
\text{and}\ |\xi |\ge R.
$$
If $(x_0,\xi _0)\in \rr d\times (\rr
d\setminus 0)$, then the element $c$ is called \emph{unitary near
$(x_0,\xi _0)$} if $c$ is $(X,\Gamma ,R)$-unitary for some open
neighbourhood $X$ of $x_0$, open conical neighbourhood $\Gamma$ of
$\xi _0$ and $R>0$.

\par

\begin{defn}\label{defchar}
Let  $0\le \delta <\rho \le 1$, $\omega _0 \in \mathscr P _{\rho,\delta}
(\rr {2d})$, $a\in S^{(\omega _0 )}_{\rho
,\delta} (\rr {2d})$, and set $\mu =\rho -\delta >0$.
The point $(x_0,\xi _0)$ in $\rr d\times (\rr d\setminus 0)$ is
called \emph{non-characteristic} for $a$ (with respect to $\omega _0$),
if there are elements $b\in S^{(1/\omega _0
)}_{\rho ,\delta} (\rr {2d})$, $c\in S^0_{\rho ,\delta}(\rr
{2d})$ which is unitary near $(x_0,\xi _0)$, and $h\in S^{-\mu}_{\rho
,\delta}(\rr {2d})$ such that
$$
b(x,\xi )a(x,\xi ) = c(x,\xi ) + h(x,\xi ),\qquad (x,\xi )\in \rr
{2d}.
$$
The point $(x_0,\xi _0)$ in $\rr d\times (\rr d\setminus 0)$ is called
\emph{characteristic} for $a$ (with respect to $\omega _0$), if it is
\emph{not} non-characteristic for $a$ with respect to $\omega _0 $. The
set of characteristic points (the characteristic set), for
$a\in S_{\rho ,\delta}^{(\omega _0 )}(\rr {2d})$ with respect to $
\omega _0$, is denoted by $\Char (a)=\Char _{(\omega _0)}(a)$.
\end{defn}

\par

\begin{rem}\label{compchar}
Let $\omega _0 (x,\xi )=\eabs \xi ^r$, $r\in \mathbf R$, and assume that
$a\in S^r_{1,0}(\rr {2d})=S_{1,0}^{(\omega _0 )}(\rr {2d})$ is
polyhomogeneous with principal symbol $a_r\in S^r_{1,0}(\rr
{2d})$. (Cf. Definition 18.1.5 in \cite{Ho1}.) Also let $\Char '(a)$
be the set of characteristic points of $\op (a)$ in the classical
sense (i.{\,}e. in the sense of Definition 18.1.25 in \cite{Ho1}). We
claim that
\begin{equation}\label{charcharrel}
\Char _{(\omega _0 )}(a)\subseteq \Char '(a).
\end{equation}

\par

In fact, assume that $(x_0,\xi _0)\notin \Char '(a)$. This means that
there is a neighbourhood $X$ of $x_0$, a conical neighbourhood $\Gamma
$ of $\xi _0$, $R>0$ and $b\in S^{-r}_{1,0}(\rr {2d})$ such that
$a_r(x,\xi )b(x,\xi )=1$
when
\begin{equation}\label{UGammaR}
x\in X,\quad \xi \in \Gamma,\quad \text{and}\quad |\xi |>R.
\end{equation}
We shall prove that $a(x,\xi )b(x,\xi )=1$ when $(x,\xi )$
satisfies \eqref{UGammaR} for some $b\in S^{-r}_{1,0}$ and some
choices of $X$, $\Gamma$ and $R$, wherefrom $(x_0,\xi
_0)\notin \Char _{(\omega _0 )}(a)$.

\par

Since
\begin{gather*}
|b(x,\xi )|\le C\eabs \xi ^{-r},\quad |a_r(x,\xi )|\le C\eabs \xi
^{r},
\intertext{and}
|a(x,\xi )-a_r(x,\xi )|\le C\eabs \xi ^{r-1},
\end{gather*}
for some constant $C$, it follows that
$$
|a_r(x,\xi )|\ge C^{-1}\eabs \xi ^r,\quad \text{and}\quad C^{-1}\eabs
\xi ^r\le |a(x,\xi )|\le C\eabs \xi ^r,
$$
when $(x,\xi )$ satisfies \eqref{UGammaR}, for some constants $C$ and
$R$. Hence, if $\fy \in S^0_{1,0}$ is supported by $X\times \Gamma $,
and equal to one in a conical neighborhood of $(x_0,\xi _0)$, it
follows that $b=\fy \cdot a$ fulfills the required properties. This
proves the assertion.
\end{rem}

\par

\subsection{Modulation spaces} \label{subsec3}

\par

In this subsection we consider properties of
modulation spaces which will be used
for the definition of wave-front sets of such spaces in Sections \ref{sec6},
and for the proofs of
micro-local results for pseudo-differential operators with
non-smooth symbols in Section \ref{sec7}. The reader who is not
interested in these investigations might immediately pass to Sections
\ref{sec2} and \ref{sec3}.

\par

The short-time Fourier transform of $f\in \mathscr S'(\rr d)$
with respect to (the fixed window function) $\phi \in \mathscr S(\rr
d)$ is defined by
$$
(V_\phi f)(x,\xi ) =\mathscr F(f\cdot \overline {\phi (\cdo -x)})(\xi
).
$$
We note that the right-hand side makes sense, since it is the partial
Fourier transform of the tempered distribution
$$
F(x,y)=(f\otimes \overline \phi)(y,y-x)
$$
with respect to the $y$-variable.
If $f \in L^p_{(\omega )}(\rr d)$ for some $p\in [1,\infty ]$
and $\omega \in \mathscr P(\rr d)$, then  $V_\phi f$
takes the form
\begin{equation}\label{stftformula}
V_\phi f(x,\xi ) = (2\pi )^{-d/2}\int _{\rr {d}} f(y)\overline {\phi
(y-x)}e^{-i\scal y\xi}\, dy.
\end{equation}

\par

In the following lemma we recall some
general continuity properties of the short-time Fourier transform. We
omit the proof since the result can be found in
e.{\,}g. \cite{Fo,Gro-book}.

\par

\begin{lemma}\label{stftlemma}
Let $f\in \mathscr S'(\rr d)$ and $\phi \in \mathscr S(\rr d)$.
Then the following is true:
\begin{enumerate}
\item $V_\phi f\in
\mathscr S(\rr {2d})$ if and only if $f,\phi \in \mathscr S(\rr {d})$;

\vrum

\item The map $(f,\phi )\mapsto V_\phi f$ is continuous from $\mathscr
S(\rr d)\times \mathscr S(\rr d)$ to $\mathscr S(\rr {2d})$, which
extends uniquely to a continuous map from
$L^2(\rr d)\times L^2(\rr d)$ to $L^2(\rr {2d})$.
\end{enumerate}
\end{lemma}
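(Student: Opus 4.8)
The plan is to reduce the whole lemma to two facts that are already recorded in the text: that $V_\phi f=\mathscr F_2(F_{f,\phi})$, where $F_{f,\phi}(x,y)=f(y)\overline{\phi (y-x)}=(f\otimes \overline \phi )(y,y-x)$, and that $\mathscr F_2$, as well as every linear substitution $G(x,y)\mapsto G(Ax+By,Cx+Dy)$ with invertible coefficient matrix, is a homeomorphism both of $\mathscr S(\rr {2d})$ and of $\mathscr S'(\rr {2d})$.

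For the ``if'' part of (1) together with the first assertion in (2) I would first recall that $(f,\phi )\mapsto f\otimes \overline \phi$ is a continuous bilinear map from $\mathscr S(\rr d)\times \mathscr S(\rr d)$ into $\mathscr S(\rr {2d})$, and then compose it with the substitution $G(x,y)\mapsto G(y,y-x)$ and with $\mathscr F_2$, both homeomorphisms of $\mathscr S(\rr {2d})$. Hence $(f,\phi )\mapsto V_\phi f$ is continuous from $\mathscr S(\rr d)\times \mathscr S(\rr d)$ into $\mathscr S(\rr {2d})$; in particular $V_\phi f\in \mathscr S(\rr {2d})$ whenever $f,\phi \in \mathscr S(\rr d)$.

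For the ``only if'' part of (1) observe that $\phi \in \mathscr S(\rr d)$ is part of the standing hypothesis, so what has to be shown is that $V_\phi f\in \mathscr S(\rr {2d})$ forces $f\in \mathscr S(\rr d)$; moreover we may assume $\phi \neq 0$, the case $\phi =0$ (where $V_\phi f\equiv 0$) being vacuous. The plan is to invoke the short-time Fourier inversion formula. Choose $\gamma \in \mathscr S(\rr d)$ with $\scal \gamma \phi \neq 0$, for instance $\gamma =\phi $. Then, weakly in $\mathscr S'(\rr d)$,
$$
f(y)=c_{\gamma ,\phi}\iint V_\phi f(x,\xi )\,e^{i\scal y\xi }\gamma (y-x)\,dx\,d\xi ,\qquad c_{\gamma ,\phi}\neq 0,
$$
so that $f$ equals $c_{\gamma ,\phi}$ times the synthesis operator $V_\gamma ^*$ applied to $V_\phi f$. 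The key step is to show that $V_\gamma ^*$ maps $\mathscr S(\rr {2d})$ continuously into $\mathscr S(\rr d)$: carrying out the $\xi $-integration first turns $V_\gamma ^*$ into a partial inverse Fourier transform (a homeomorphism of $\mathscr S(\rr {2d})$), the substitution $H(x,y)\mapsto H(y-x,y)$ is again such a homeomorphism, and the remaining integration against the fixed Schwartz function $\gamma $ in the $x$-variable is plainly continuous from $\mathscr S(\rr {2d})$ into $\mathscr S(\rr d)$. Since $V_\phi f\in \mathscr S(\rr {2d})$ by hypothesis, it follows that $f\in \mathscr S(\rr d)$. I expect this to be the one genuinely delicate point: one must know both that the inversion formula is valid for $f\in \mathscr S'(\rr d)$ and that the synthesis integral converges in the \emph{topology} of $\mathscr S(\rr d)$, not merely in $\mathscr S'(\rr d)$.

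For the $L^2$-extension in (2) the plan is to prove Moyal's identity on $\mathscr S$. Applying Plancherel's theorem in the $\xi $-variable directly in the defining formula $V_\phi f(x,\xi )=\mathscr F\big (f\,\overline{\phi (\cdo -x)}\big )(\xi )$ gives $\int |V_\phi f(x,\xi )|^2\,d\xi =\int |f(y)|^2|\phi (y-x)|^2\,dy$, and integrating in $x$ and using Fubini yields $\nm{V_\phi f}{L^2(\rr {2d})}=\nm f{L^2}\nm \phi {L^2}$ for $f,\phi \in \mathscr S(\rr d)$. Thus $(f,\phi )\mapsto V_\phi f$ is bounded, with respect to the $L^2$-norms, from $\mathscr S(\rr d)\times \mathscr S(\rr d)$ into $L^2(\rr {2d})$; since $\mathscr S(\rr d)$ is dense in $L^2(\rr d)$, this bilinear map extends uniquely to a continuous bilinear map from $L^2(\rr d)\times L^2(\rr d)$ into $L^2(\rr {2d})$, and by uniqueness the extension agrees with the integral in \eqref{stftformula} whenever that integral converges absolutely.
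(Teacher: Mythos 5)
Your argument is correct. The paper itself omits the proof of this lemma, referring to \cite{Fo,Gro-book}, and your route --- tensor product plus linear substitution plus partial Fourier transform $\mathscr F_2$ for the $\mathscr S$-statements, the weak STFT inversion formula (with the observation that the synthesis operator $V_\gamma^*$ maps $\mathscr S(\rr {2d})$ continuously into $\mathscr S(\rr d)$) for the converse direction in (1) when $\phi \neq 0$, and Moyal's identity together with density of $\mathscr S$ in $L^2$ for the extension in (2) --- is exactly the standard argument given in those references.
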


\par

If $f\in \mathscr S'(\rr d)$ and $\phi \in \mathscr S(\rr d)$, then it
is well-known that  $V_\phi f\in \mathscr S'\cap
C^\infty$ and
$$
|V_\phi f(x,\xi )|\le C\eabs x^{N_0}\eabs \xi ^{N_0},
$$
for some constants $C$ and $N_0$ (see e.{\,}g. \cite{Gro-book}). If in
addition $f$ has compact support, then the following estimate holds
(see Proposition 3.6 in \cite{CG}).

\par

\begin{lemma}\label{stftcompact}
Let $f\in \mathscr E'(\rr d)$ and $\phi \in \mathscr S(\rr d)$. Then
for some constant $N_0$ and every $N\ge 0$, there are constants $C_N$
such that
$$
|V_\phi f(x,\xi )|\le C_N\eabs x^{-N}\eabs \xi ^{N_0}.
$$
\end{lemma}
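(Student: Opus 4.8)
The plan is to reduce everything to the structure theorem for compactly supported distributions, combined with the rapid decay of $\phi$. Fix a compact neighbourhood $K$ of $\supp f$. Since $f\in \mathscr E'(\rr d)$ has finite order, there are a constant $C$ and a nonnegative integer $N_0$ (the order of $f$) such that
\begin{equation*}
\abp{\scal fg}\le C\sum _{\abp \alpha \le N_0}\ \sup _{y\in K}\abp{\partial ^\alpha g(y)}
\end{equation*}
for every $g\in C^\infty (\rr d)$. I would apply this to $g=g_{x,\xi }$, where $g_{x,\xi }(y)=\overline {\phi (y-x)}\, e^{-i\scal y\xi }$, so that $V_\phi f(x,\xi )=(2\pi )^{-d/2}\scal f{g_{x,\xi }}$; this pairing is legitimate since $f$ has compact support, and $V_\phi f\in C^\infty$ by the discussion preceding the lemma, so the pointwise estimate makes sense.

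Next I would estimate the derivatives of $g_{x,\xi }$. By Leibniz' rule, $\partial ^\alpha _y g_{x,\xi }(y)$ is a finite linear combination of terms $\xi ^{\alpha -\beta }(\partial ^\beta \overline \phi )(y-x)e^{-i\scal y\xi }$ with $\beta \le \alpha$, whence
\begin{equation*}
\abp{\partial ^\alpha _y g_{x,\xi }(y)}\le C\eabs \xi ^{\abp \alpha }\sum _{\abp \beta \le \abp \alpha }\abp{(\partial ^\beta \phi )(y-x)}.
\end{equation*}
Since $\phi \in \mathscr S(\rr d)$, for every $M\ge 0$ there is a constant $C_M$ with $\abp{(\partial ^\beta \phi )(z)}\le C_M\eabs z^{-M}$ for all $\abp \beta \le N_0$, while Peetre's inequality gives $\eabs{y-x}^{-M}\le C\eabs x^{-M}\eabs y^{M}$. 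For $y$ in the compact set $K$ the factor $\eabs y^{M}$ is bounded by a constant depending only on $K$ and $M$, so, choosing $M=N$, one obtains $\abp{(\partial ^\beta \phi )(y-x)}\le C_N\eabs x^{-N}$ uniformly for $y\in K$. Inserting these bounds into the structure inequality and using $\eabs \xi ^{\abp \alpha }\le \eabs \xi ^{N_0}$ for $\abp \alpha \le N_0$ yields $\abp{V_\phi f(x,\xi )}\le C_N\eabs x^{-N}\eabs \xi ^{N_0}$, which is the assertion.

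The point to watch is uniformity: the order $N_0$ and the compact set $K$ are determined once and for all by $f$ alone, so the growth exponent $N_0$ in $\xi$ does not depend on $N$, whereas the decay exponent $N$ in the $x$-variable is arbitrary because it comes solely from the Schwartz estimates on $\phi$. I do not anticipate a genuine obstacle here; the only mildly delicate step is that a large value of $\abp x$ really does push $y-x$ (with $y\in K$) out to infinity, which is precisely what the use of Peetre's inequality encodes. Everything else is a routine Leibniz-and-Peetre computation.
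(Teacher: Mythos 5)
Your proof is correct, and it takes a genuinely different route from the one in the paper. You argue on the physical side: you pair $f$ directly against the window $g_{x,\xi}(y)=\overline{\phi (y-x)}\, e^{-i\scal y\xi}$ and invoke the finite-order seminorm estimate for $\mathscr E'$; Leibniz' rule then produces the fixed growth $\eabs \xi ^{N_0}$ (with $N_0$ the order of $f$), while the Schwartz decay of $\partial ^\beta \phi$ off the compact set $K$, combined with Peetre's inequality, produces the arbitrary decay $\eabs x^{-N}$. The paper instead works on the Fourier side: it inserts a cutoff $\fy \in C_0^\infty$ with $\fy f=f$, writes $V_\phi f(x,\cdo )=(2\pi )^{-d/2}\widehat f*\big (V_\phi \fy (x,\cdo )\big )$, uses the Paley--Wiener--Schwartz bound $|\widehat f(\xi )|\le C\eabs \xi ^{N_0}$ together with the fact that $V_\phi \fy \in \mathscr S(\rr {2d})$ (Lemma \ref{stftlemma}), and absorbs the convolution by taking $N\ge N_0+d+1$ in the window estimate. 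Your version is more self-contained, needing only the structure theorem and a Leibniz-and-Peetre computation, and it makes transparent why $N_0$ is fixed while $N$ is free; the paper's version reuses machinery already established (the convolution identity and Lemma \ref{stftlemma}) and is the more natural one in a time-frequency context, but the two are of comparable length and both yield $N_0$ equal (essentially) to the order of $f$.
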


\par

For further investigations of the short-time Fourier transform, we
need the twisted convolution $\widehat *$ on $L^1(\rr {2d})$,
defined by the formula
$$
(F\, \widehat *\, G)(x,\xi )=(2\pi )^{-d/2}\iint F(x-y,\xi -\eta
)G(y,\eta )e^{-i\scal {x-y}\eta}\, dyd\eta .
$$
By straight-forward computations it follows that $\widehat *$
restricts to a continuous multiplication on $\mathscr S(\rr
{2d})$. Furthermore, the map $(F,G)\mapsto F\, \widehat *\, G$ from
$\mathscr S(\rr {2d})\times \mathscr S(\rr {2d})$ to $\mathscr S(\rr
{2d})$ extends uniquely to continuous mappings from $\mathscr S'(\rr
{2d})\times \mathscr S(\rr {2d})$ and $\mathscr S(\rr {2d})\times
\mathscr S'(\rr {2d})$ to $\mathscr S'(\rr {2d})\bigcap C^{\infty}(\rr
{2d})$.

\par

The following lemma is important when proving certain invariance
properties for modulation spaces. We omit the proof since the
result can be found in \cite{Gro-book}.

\par

\begin{lemma}\label{stftproperties}
For each $f\in \mathscr S'(\rr d)$ and  $\phi _j\in
\mathscr S(\rr d)$,  $j=1,2,3$, it holds
$$
(V_{\phi _1}f)\widehat *(V_{\phi _2}\phi _3) = (\phi _3,\phi
_1)_{L^2}\cdot V_{\phi _2}f.
$$
\end{lemma}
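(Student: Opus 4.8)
The plan is to prove Lemma \ref{stftproperties} by reducing everything to the continuity and density statements already collected: since $V_{\phi_1}f$, $V_{\phi_2}\phi_3$ and $V_{\phi_2}f$ are all tempered distributions (by Lemma \ref{stftlemma} and the remark following it), and since the twisted convolution extends continuously to $\mathscr S'(\rr{2d})\times\mathscr S(\rr{2d})$, it suffices to establish the identity when $f,\phi_1,\phi_2,\phi_3$ all lie in $\mathscr S(\rr d)$ and then pass to the limit. Indeed, fix $\phi_1,\phi_2,\phi_3\in\mathscr S(\rr d)$; both sides of the asserted equality are continuous in $f$ with respect to the $\mathscr S'(\rr d)$-topology (the left-hand side because $f\mapsto V_{\phi_1}f$ is continuous from $\mathscr S'$ to $\mathscr S'$ and $\widehat *$ is separately continuous, the right-hand side trivially), so density of $\mathscr S(\rr d)$ in $\mathscr S'(\rr d)$ finishes the extension step once the smooth case is known. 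A further density argument in $\phi_1$ (or direct inspection) handles the general $\phi_j\in\mathscr S$ case, though in fact the computation below works verbatim for Schwartz $f$ and $\phi_j$.

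So the core is the Schwartz-function computation. First I would write out, using \eqref{stftformula}, both $V_{\phi_1}f(x-y,\xi-\eta)$ and $V_{\phi_2}\phi_3(y,\eta)$ as explicit oscillatory integrals, insert them into the definition of $\widehat *$, and obtain a fourfold integral over the new integration variables together with $y,\eta$. The $\eta$-integration produces a Fourier transform that collapses to a delta, forcing an identification of the two inner integration variables; carrying this out leaves an integral in which the $\phi_1$-dependent and $\phi_3$-dependent factors separate: one factor is exactly $\int \phi_3\overline{\phi_1}\,=\,(\phi_3,\phi_1)_{L^2}$ up to the normalizing constant, and the remaining factor is recognized, after relabelling, as $V_{\phi_2}f(x,\xi)$. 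Alternatively — and this is the cleaner route I would actually take — one uses the known behaviour of the short-time Fourier transform under the metaplectic/Heisenberg action: $V_{\phi_2}(\pi(z)g)$ and $\pi(z)(V_{\phi_2}g)$ differ only by a unimodular phase, and the twisted convolution $V_{\phi_1}f\,\widehat *\,G$ can be interpreted as superposing the translates-modulations $\pi(z)(V_{\phi_2}f)$ against the coefficients $G(z)=V_{\phi_2}\phi_3(z)$, which by the reconstruction/orthogonality formula for the STFT integrates to $(\phi_3,\phi_1)_{L^2}V_{\phi_2}f$. Either way the identity is ultimately Moyal's formula for the STFT in disguise.

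The main obstacle, and the only place where care is genuinely needed, is bookkeeping of the phase factors and the $(2\pi)^{-d/2}$ normalizations: the twisted convolution, the short-time Fourier transform, and the Fourier transform each carry their own conventions, and an error in a phase would either spoil the collapse of the $\eta$-integral or leave a spurious factor $e^{i\scal{x}{\xi}}$ multiplying $V_{\phi_2}f$. I would therefore organize the computation so that the delta-function reduction is performed first (this pins down which variables must be equated and is robust to phase errors), and only afterwards reconcile the constants by testing the resulting identity on a single convenient example, e.g. $f=\phi_1=\phi_2=\phi_3$ a normalized Gaussian, for which every STFT and twisted convolution can be evaluated in closed form and the constant $(\phi_3,\phi_1)_{L^2}$ is pinned down unambiguously. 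Since the statement is quoted from \cite{Gro-book}, I would in the write-up simply cite that reference for the detailed verification and present the reduction-plus-Moyal argument as the conceptual proof.
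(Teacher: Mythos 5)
Your proposal is correct. Note, however, that the paper offers no proof of Lemma \ref{stftproperties} at all: it simply states "We omit the proof since the result can be found in \cite{Gro-book}", so there is nothing to compare your argument against except that citation. Your argument is a complete substitute. The density step is legitimate because the paper has already recorded that $(F,G)\mapsto F\,\widehat *\,G$ extends continuously from $\mathscr S'(\rr{2d})\times \mathscr S(\rr{2d})$ to $\mathscr S'(\rr {2d})\cap C^\infty(\rr{2d})$ and because $f\mapsto V_{\phi_j}f$ is weak-$*$ continuous on $\mathscr S'(\rr d)$, so it suffices to verify the identity for Schwartz $f$. For the core computation, your worry about constants and phases is resolvable by direct inspection with the paper's conventions: inserting \eqref{stftformula} into the definition of $\widehat *$ gives a phase $e^{i\scal{t-s-x+y}{\eta}}$ in the $\eta$-integral, which collapses to $(2\pi)^d\delta (t-s-x+y)$; substituting $s=t-x+y$ turns $\overline{\phi _2(s-y)}$ into $\overline{\phi _2(t-x)}$, the $y$-integral separates off as $\int \phi _3(u)\overline {\phi _1(u)}\, du=(\phi _3,\phi _1)_{L^2}$, and the prefactor $(2\pi)^{-3d/2}\cdot (2\pi)^d=(2\pi)^{-d/2}$ is exactly what is needed to reassemble $V_{\phi _2}f(x,\xi )$ with no spurious phase. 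So the Gaussian sanity check is not even necessary, though it is a reasonable safeguard.
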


\medspace

Now we recall the definition of modulation spaces. Let $\omega
\in \mathscr P(\rr {2d})$, $p,q\in
[1,\infty ]$, and  $\phi \in \mathscr S(\rr d)\setminus 0$ be
fixed. Then the \emph{modulation space} $M^{p,q}_{(\omega )}(\rr
d)$ is the set of all $f\in \mathscr S'(\rr d)$ such that
\begin{equation}\label{modnorm}
\nm f{M^{p,q}_{(\omega )}} = \nm f{M^{p,q,\phi }_{(\omega )}} \equiv
\nm {V_\phi f\, \omega}{L^{p,q}_1}<\infty .
\end{equation}
Here $\nm \cdo {L^{p,q}_1}$ is the norm given by
$$
\nm F{L^{p,q}_1}\equiv \Big ( \int _{\rr {d}} \Big ( \int _{\rr
{d}}|F(x,\xi )|^p\, dx \Big )^{q/p} \, d\xi \Big
)^{1/q},
$$
when $F\in L^1_{loc}(\rr {2d})$ (with obvious interpretation when
$p=\infty$ or $q=\infty$). Furthermore, the modulation space
$W^{p,q}_{(\omega )}(\rr d)$ consists of all $f\in \mathscr S'(\rr d)$
such that
\begin{equation*}
\nm f{W^{p,q}_{(\omega )}} = \nm f{W^{p,q,\phi }_{(\omega )}} \equiv
\nm {V_\phi f\, \omega}{L^{p,q}_2}<\infty ,
\end{equation*}
where $\nm \cdo {L^{p,q}_2}$ is the norm given by
$$
\nm F{L^{p,q}_2}\equiv \Big ( \int _{\rr {d}} \Big ( \int _{\rr
{d}}|F(x,\xi )|^q\, d\xi \Big )^{p/q} \, dx \Big )^{1/p},
$$
when $F\in L^1_{loc}(\rr {2d})$.

\par

If $\omega =1$, then the notation $M^{p,q}$ and $W^{p,q}$
are used instead of $M^{p,q}_{(\omega )}$ and $W^{p,q}_{(\omega )}$
respectively. Moreover we set $M^p_{(\omega )}=W^p_{(\omega )} =
M^{p,p}_{(\omega )}$ and $M^p=W^p = M^{p,p}$.

\par

We note that $M^{p,q}$ are modulation spaces of classical form, while
$W^{p,q}$ are classical form of Wiener amalgam spaces. We refer to
\cite{Feichtinger6} for the most updated definition of modulation
spaces.

\par

The following proposition is a consequence of well-known facts
in \cite {F1} or \cite {Gro-book}. Here and in what follows we let
$p'$ denote the conjugate exponent of $p$, i.{\,}e. $1/p+1/p'=1$.

\par

\begin{prop}\label{p1.4}
Let $p,q,p_j,q_j\in [1,\infty ]$ for $j=1,2$, and let
$\omega ,\omega _1,\omega _2,v\in \mathscr P(\rr {2d})$. Then the
following is true:
\begin{enumerate}
\item[{\rm{(1)}}] if  $\omega$ is $v$-moderate and if $\phi \in
M^1_{(v)}(\rr d)\setminus 0$, then $f\in
M^{p,q}_{(\omega )}(\rr d)$ if and only if \eqref {modnorm} holds,
i.{\,}e. $M^{p,q}_{(\omega )}(\rr d)$ is independent of the choice of
$\phi$. Moreover, $M^{p,q}_{(\omega )}(\rr d)$ is a Banach space under
the norm in \eqref{modnorm}, and different choices of $\phi$ give rise
to equivalent norms;

\vrum

\item[{\rm{(2)}}] if $p_1\le p_2$, $q_1\le q_2$ and $\omega _2\le C
\omega _1$ for some constant $C$, then
$$
\mathscr S(\rr d)\hookrightarrow M^{p_1,q_1}_{(\omega _1)}(\rr
d)\hookrightarrow M^{p_2,q_2}_{(\omega _2)}(\rr d)\hookrightarrow
\mathscr S'(\rr d)\text ;
$$

\vrum

\item[{\rm{(3)}}] the sesqui-linear form $( \cdo ,\cdo )$ on $\mathscr
S$ extends
to a continuous map from $M^{p,q}_{(\omega )}(\rr
d)\times M^{p'\! ,q'}_{(1/\omega )}(\rr d)$ to $\mathbf C$. On the
other hand, if $\nmm a = \sup \abp {(a,b)}$, where the supremum is
taken over all $b\in M^{p',q'}_{(1/\omega )}(\rr d)$ such that
$\nm b{M^{p',q'}_{(1/\omega )}}\le 1$, then $\nmm {\cdot}$ and $\nm
\cdot {M^{p,q}_{(\omega )}}$ are equivalent norms;

\vrum

\item[{\rm{(4)}}] if $p,q<\infty$, then $\mathscr S(\rr d)$ is dense
in $M^{p,q}_{(\omega )}(\rr d)$. The dual space of $M^{p,q}_{(\omega
)}(\rr d)$ can be identified with $M^{p'\! ,q'}_{(1/\omega )}(\rr
d)$, through the form $(\cdo  ,\cdo )_{L^2}$. Moreover, $\mathscr
S(\rr d)$ is weakly dense in $M^{\infty }_{(\omega )}(\rr d)$.
\end{enumerate}
Similar facts hold when the $M^{p,q}_{(\omega)}$ spaces are replaced
by $W^{p,q}_{(\omega)}$ spaces.
\end{prop}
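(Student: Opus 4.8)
The plan is to reduce every assertion to a statement about the short-time Fourier transform in the mixed Lebesgue norms $\nm{\cdot}{L^{p,q}_1}$, moving back and forth between $f$ and $V_\phi f$ by means of the reconstruction identity in Lemma~\ref{stftproperties} together with a weighted Young inequality for the twisted convolution. The basic ingredient I would establish first is that, if $\omega \in \mathscr P(\rr {2d})$ is $v$-moderate, then
$$
\nm{(F\,\widehat *\,G)\,\omega}{L^{p,q}_1}\le C\,\nm{F\,\omega}{L^{p,q}_1}\,\nm{G\,v}{L^1(\rr {2d})};
$$
this follows from the pointwise bound $|F\,\widehat *\,G|\le |F|*|G|$, from \eqref{moderate} applied to $\omega$, and from Young's inequality performed separately in the $x$- and the $\xi$-variables of $\rr {2d}$.

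\emph{Part (1).} For $\phi_1,\phi_2\in\mathscr S(\rr d)\setminus0$ I would pick $\phi_3\in\mathscr S$ with $(\phi_3,\phi_1)_{L^2}\ne0$ and use Lemma~\ref{stftproperties} to write $V_{\phi_2}f=(\phi_3,\phi_1)_{L^2}^{-1}(V_{\phi_1}f)\,\widehat *\,(V_{\phi_2}\phi_3)$. Since $V_{\phi_2}\phi_3\in\mathscr S(\rr {2d})$, the basic estimate gives $\nm{V_{\phi_2}f\,\omega}{L^{p,q}_1}\le C\,\nm{V_{\phi_1}f\,\omega}{L^{p,q}_1}$, and by symmetry the two norms are equivalent, so \eqref{modnorm} is independent of $\phi\in\mathscr S\setminus0$. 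To enlarge the admissible window class to $M^1_{(v)}$, I would rerun this with $\phi_2\in M^1_{(v)}$, extending the reconstruction identity to such windows by a limiting argument and using $\nm{V_\phi\phi_2\,v}{L^1}<\infty$. For completeness, note that $f\mapsto V_\phi f\,\omega$ is an isometric embedding of $M^{p,q}_{(\omega)}$ into $L^{p,q}_1$; if $(f_n)$ is Cauchy then $V_\phi f_n\,\omega\to F$ in $L^{p,q}_1$, the pairing bound from part~(2) below shows $(f_n)$ is Cauchy in $\mathscr S'$ with some limit $f$, and continuity of $V_\phi$ on $\mathscr S'$ forces $F=V_\phi f\,\omega$, whence $f\in M^{p,q}_{(\omega)}$ and $f_n\to f$.

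\emph{Part (2).} The embedding $\mathscr S\hookrightarrow M^{p_1,q_1}_{(\omega_1)}$ holds because $f\mapsto V_\phi f$ is continuous from $\mathscr S$ into $\mathscr S(\rr {2d})$ by Lemma~\ref{stftlemma}, and $\omega_1$ grows at most polynomially, so a Schwartz seminorm of $V_\phi f$ dominates $\nm{V_\phi f\,\omega_1}{L^{p_1,q_1}_1}$. For $M^{p_1,q_1}_{(\omega_1)}\hookrightarrow M^{p_2,q_2}_{(\omega_2)}$ I would take normalized $\psi\in\mathscr S$, use Lemma~\ref{stftproperties} to get $|V_\phi f|\le |V_\psi f|*|V_\phi\psi|$, and apply Young's inequality with $V_\phi\psi$ placed in the Lebesgue space $L^{r,s}_1$, where $1+1/p_2=1/p_1+1/r$ and $1+1/q_2=1/q_1+1/s$ (possible precisely because $p_1\le p_2$ and $q_1\le q_2$), together with $\omega_2\le C\omega_1$ and $v$-moderateness. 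For $M^{p_2,q_2}_{(\omega_2)}\hookrightarrow\mathscr S'$, the orthogonality relation $(f,g)=\nm\phi{L^2}^{-2}(V_\phi f,V_\phi g)_{L^2}$ and Hölder in the mixed norm give $|(f,g)|\le C\,\nm f{M^{p_2,q_2}_{(\omega_2)}}\,\nm{V_\phi g/\omega_2}{L^{p_2',q_2'}_1}$, the last factor being controlled by a Schwartz seminorm of $g$.

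\emph{Parts (3) and (4).} In (3), extending $(\cdot,\cdot)$ and the bound $\nmm f\le C\,\nm f{M^{p,q}_{(\omega)}}$ come from the same orthogonality relation and the mixed-norm Hölder pairing of $L^{p,q}_1$ with $L^{p',q'}_1$. For the reverse inequality I would use that $\nm F{L^{p,q}_1}=\sup\{|(F,G)|:\nm G{L^{p',q'}_1}\le1\}$ (an elementary property of mixed Lebesgue norms, valid for every $p,q\in[1,\infty]$), applied to $F=V_\phi f\,\omega$: given a near-optimal $G$, let $g$ be the adjoint short-time Fourier transform of $G/\omega$, i.e.\ up to a normalizing constant $g=\iint (G/\omega)(x,\xi)\,e^{i\scal{\cdot}{\xi}}\phi(\cdo-x)\,dx\,d\xi$, and use Lemma~\ref{stftproperties} to express $V_\phi g$ as a twisted convolution of $G/\omega$ with $V_\phi\phi\in\mathscr S$; the basic estimate then gives $\nm g{M^{p',q'}_{(1/\omega)}}\le C$, while $(f,g)$ equals $(F,G)$ up to the fixed constant. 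For (4), when $p,q<\infty$ the compactly supported functions are dense in $L^{p,q}_1$, and transporting this through the adjoint short-time Fourier transform (which maps $L^{p,q}_1$ boundedly into $M^{p,q}_{(\omega)}$ and inverts $V_\phi$ up to a constant) gives density of finite sums of time-frequency shifts of $\phi$, hence of $\mathscr S$ after one further mollification; equivalently the cutoff-and-mollify operators converge strongly on $M^{p,q}_{(\omega)}$ by dominated convergence on the $L^{p,q}_1$ side. The dual identification then follows from (3): the pairing embeds $M^{p',q'}_{(1/\omega)}$ isometrically into $(M^{p,q}_{(\omega)})^*$, and any functional on $M^{p,q}_{(\omega)}$, transported to the closed subspace $V_\phi(M^{p,q}_{(\omega)})\,\omega\subseteq L^{p,q}_1$, extends by Hahn--Banach, is represented (since $p,q<\infty$) by an element of $L^{p',q'}_1$, whose adjoint short-time Fourier transform lies in $M^{p',q'}_{(1/\omega)}$ and induces the functional; and $\mathscr S$ is weakly dense in $M^\infty_{(\omega)}=(M^1_{(1/\omega)})^*$ because a proper weak-$*$ closure would, by Hahn--Banach, produce a nonzero element of $M^1_{(1/\omega)}$ annihilating $\mathscr S$, contradicting the density of $\mathscr S$ in $M^1_{(1/\omega)}$. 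The statements for the $W^{p,q}_{(\omega)}$ spaces follow by the same arguments with the roles of the $x$- and $\xi$-integrations interchanged. The hard part throughout is this systematic passage from estimates on $V_\phi f$ back to estimates on $f$, which rests on Lemma~\ref{stftproperties} and the weighted Young inequality above; the delicate points are the bootstrap in part~(1) enlarging the admissible window class from $\mathscr S$ to $M^1_{(v)}$, and the construction in parts~(3)--(4) of elements of the dual modulation space out of functions in the dual mixed Lebesgue space, with uniform norm control and with the correct behaviour when $p$ or $q$ equals $\infty$.
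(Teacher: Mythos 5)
The paper does not actually prove this proposition: it is introduced with the sentence that it ``is a consequence of well-known facts in \cite{F1} or \cite{Gro-book}'', so there is no in-paper argument to compare against. Your proposal reconstructs exactly the standard proofs from those references: the reproducing identity of Lemma \ref{stftproperties} combined with a weighted mixed-norm Young inequality handles the change of window, the embeddings and completeness, while the extended orthogonality relation $(f,g)=\nm{\phi}{L^2}^{-2}(V_\phi f,V_\phi g)_{L^2}$, mixed-norm H\"older duality and the adjoint short-time Fourier transform give parts (3) and (4). The outline is correct; the only points genuinely requiring care --- which you yourself flag --- are the justification of the orthogonality relation beyond $L^2$ (where weak$^*$ density must replace norm density when $p$ or $q$ equals $\infty$) and the bootstrap enlarging the window class from $\mathscr S\setminus 0$ to $M^1_{(v)}\setminus 0$.
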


\par

Proposition \ref{p1.4}{\,}(1) permits us to be rather vague about
to the choice of $\phi \in  M^1_{(v)}\setminus 0$ in
\eqref{modnorm}. For example, $\nm a{M^{p,q}_{(\omega )}}\le C$  for
some $C>0$  and  for
every $a$ which belongs to a given subset of $\mathscr S'$,
means that the inequality holds for some choice
of $\phi \in  M^1_{(v)}\setminus 0$. Evidently, for any other choice
of $\phi \in  M^1_{(v)}\setminus 0$, a similar inequality is true
although $C$ may have to be replaced by a larger constant, if
necessary.

\par

Locally, the spaces $\mathscr FL^q_{(\omega)}(\rr d)$,
$M^{p,q}_{(\omega )}(\rr d)$ and $W^{p,q}_{(\omega )}(\rr d)$ are the
same, in the sense that
$$
\mathscr FL^q_{(\omega )}(\rr d) \ttbigcap \mathscr E '(\rr d) =
M^{p,q}_{(\omega )}(\rr d) \ttbigcap \mathscr E'(\rr d) =
W^{p,q}_{(\omega )}(\rr d) \ttbigcap \mathscr E'(\rr d),
$$
in view of Remark 4.4 in \cite{RSTT}. In Section \ref{sec2} and
\ref{sec6} we extend these properties in context of the new type of
wave-front sets, and recover the above equalities at the end of Section
\ref{sec5}.

\par

\begin{rem}\label{coorb}
An example of a space which might be considered as a modulation space
and which is neither  of the form
$M^{p,q}_{(\omega )}$ nor $W^{p,q}_{(\omega )}$ is the space $\widetilde
M_{(\omega )}$, which consists of all $a\in \mathscr S'(\rr {2d})$
such that
\begin{equation}\label{mtildenorm}
\nm a{\widetilde M_{(\omega )}} \equiv \int _{\rr {d}}\sup _{\zeta \in
\rr d} \big( \sup _{x,\xi  \in \rr d}|V_\phi
a(x,\xi ,\zeta ,z)\, \omega (x,\xi ,\zeta, z)| \big )\, dz
\end{equation}
is finite.  By straight-forward
application of Lemma \ref{stftproperties} and Young's inequality it
follows that
$$
M^{\infty ,1}_{(\omega )}(\rr {2d})\subseteq \widetilde M_{(\omega
)}(\rr {2d})\subseteq M^{\infty ,\infty}_{(\omega )}(\rr {2d}),
$$
with continuous embeddings. (Cf. e.{\,}g. \cite{Feichtinger3,
Feichtinger4, Feichtinger6, Gro-book}.)
\end{rem}

\par

\subsection{Pseudo-differential operators with non-smooth symbols}
\label{subsec4}

\par

In this subsection we discuss properties of pseudo-differential
operators in context of modulation spaces, and start with the following
special case of Theorem 4.2 in \cite{Toft4}. We omit the proof.

\par

\begin{prop}\label{p5.4}
Let $p,q,p_j,q_j\in [1,\infty ]$ for
$j=1,2$, be such that
$$
1/p_1-1/p_2=1/q_1-1/q_2=1-1/p-1/q,\quad q\le
p_2,q_2\le p .
$$
Also let $\omega \in \mathscr
P(\rr {2d}\oplus \rr {2d})$ and $\omega _1,\omega _2\in \mathscr P(\rr
{2d})$ be such that
\begin{equation}\label{e5.9}
\omega (x,\xi ,\zeta ,z ) \le C\frac {\omega _1
(x+z,\xi  )}{\omega _2(x,\xi +\zeta )}
\end{equation}
for some constant $C$. If $a\in M^{p,q}_{(1/\omega )}(\rr
{2d})$, then $\op (a)$ from $\mathscr S(\rr d)$ to $\mathscr S'(\rr
d)$ extends uniquely to a continuous mapping from
$M^{p_1,q_1}_{(\omega _1)}(\rr d)$ to $M^{p_2,q_2}_{(\omega _2)}(\rr
d)$.
\end{prop}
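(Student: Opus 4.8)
The plan is to reduce the statement to the sharp continuity result on modulation spaces already available in \cite{Toft4}, so the bulk of the work is bookkeeping of the various exponent and weight conditions rather than genuinely new estimates. Concretely, Theorem~4.2 in \cite{Toft4} asserts exactly that if $p,q,p_j,q_j$ satisfy
$1/p_1-1/p_2=1/q_1-1/q_2=1-1/p-1/q$ together with $q\le p_2,q_2\le p$, and if $\omega,\omega_1,\omega_2$ satisfy the moderateness relation \eqref{e5.9}, then $\op(a)$ with $a\in M^{p,q}_{(1/\omega)}(\rr{2d})$ maps $\mathscr S(\rr d)$ into $\mathscr S'(\rr d)$ and extends to a bounded operator $M^{p_1,q_1}_{(\omega_1)}\to M^{p_2,q_2}_{(\omega_2)}$. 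Thus the first step is simply to verify that the hypotheses stated in our Proposition coincide term by term with those of that theorem; this is immediate for the exponent conditions, and for the weight condition one only has to observe that \eqref{e5.9} is precisely the hypothesis imposed in \cite{Toft4} (possibly after relabelling the phase-space variables $(x,\xi,\zeta,z)$ in accordance with the convention there).

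Second, I would address the well-posedness of $\op(a)$ as a map $\mathscr S(\rr d)\to\mathscr S'(\rr d)$ for $a\in M^{p,q}_{(1/\omega)}(\rr{2d})$: since $M^{p,q}_{(1/\omega)}(\rr{2d})\hookrightarrow\mathscr S'(\rr{2d})$ by Proposition~\ref{p1.4}\,(2), the operator $\op(a)$ is a priori defined via its Schwartz kernel $K_{0,a}=(2\pi)^{-d/2}(\mathscr F_2^{-1}a)(x,x-y)$ as in \eqref{weylkernel}, and the pull-back and partial-Fourier maps appearing there are homeomorphisms on $\mathscr S'(\rr{2d})$, so $K_{0,a}\in\mathscr S'(\rr{2d})$ and $\op(a)\colon\mathscr S(\rr d)\to\mathscr S'(\rr d)$ is continuous by the kernel theorem. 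The content of the proposition is then the \emph{extension} to $M^{p_1,q_1}_{(\omega_1)}$, which follows from the boundedness estimate of \cite{Toft4} together with the density of $\mathscr S(\rr d)$ in $M^{p_1,q_1}_{(\omega_1)}$ when $p_1,q_1<\infty$, and, in the remaining cases where $p_1=\infty$ or $q_1=\infty$, from the weak-$*$ density statement in Proposition~\ref{p1.4}\,(4) combined with the duality identification in Proposition~\ref{p1.4}\,(3)--(4): one defines the extension by transposition against $M^{p_2',q_2'}_{(1/\omega_2)}$ and checks that it agrees with $\op(a)$ on $\mathscr S$.

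The main obstacle, such as it is, is not an analytic one but a matching-of-conventions issue: one must make sure that the normalization of $\op(a)=\op_0(a)$ used here (the Kohn--Nirenberg quantization of \eqref{e0.5}) is the same as, or is related by \eqref{pseudorelation} to, the quantization used in \cite{Toft4}, and that the four-variable weight $\omega\in\mathscr P(\rr{2d}\oplus\rr{2d})$ is evaluated on the short-time Fourier transform $V_\Phi a(x,\xi,\zeta,z)$ with the variables in the order dictated by \eqref{e5.9}. If the cited theorem is phrased for the Weyl quantization, one invokes \eqref{pseudorelation}: $\op_0(a)=\op_{1/2}(b)$ with $b=e^{(i/2)\scal{D_x}{D_\xi}}a$, and $e^{(i/2)\scal{D_x}{D_\xi}}$ is continuous on each $M^{p,q}_{(1/\omega)}(\rr{2d})$ (for suitably adjusted $\omega$), so the symbol class is preserved and the result transfers. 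Once these identifications are in place, no further estimates are needed and the proof is complete.
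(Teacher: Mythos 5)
Your proposal is correct and takes the same route as the paper: the paper states this result as "the following special case of Theorem 4.2 in \cite{Toft4}" and omits the proof entirely, so reducing to that theorem and checking that the exponent and weight hypotheses match is exactly what is intended. The additional remarks on well-posedness of $\op (a)\colon \mathscr S\to \mathscr S'$ and on the extension by density or duality are sound routine bookkeeping that the paper leaves implicit.
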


\par

In Section \ref{sec7} we discuss wave-front set properties for
pseudo-differential operators, where the symbol classes are defined by
means of modulation spaces. Here, for $\rho \in \mathbf R$ and $s\in
\rr 4$ we define
\begin{equation}\label{omegasrho}
\omega _{s,\rho}(x,\xi ,\zeta ,z) = \omega (x,\xi ,\zeta
,z)\eabs x^{-s_4}\eabs \zeta ^{-s_3}\eabs \xi ^{-\rho s_2}\eabs
z^{-s_1}.
\end{equation}

\par

\begin{defn}\label{symbolclass}
Let $\omega \in \mathscr P(\rr {2d}\oplus \rr {2d})$, $s\in \rr 4$,
$\rho \in \mathbf
R$, and let $\omega_{s,\rho}$ be  as in \eqref{omegasrho}. Then the
symbol class $\mho _{(\omega )}^{s,\rho}(\rr {2d})$ is the set of all
$a\in \mathscr S'(\rr {2d})$ which satisfy
\begin{equation*}
\partial _\xi ^\alpha a\in M^{\infty ,1}_{(1/\omega
_{s(\alpha),\rho})}(\rr {2d}),\qquad s(\alpha)= (s_1,|\alpha
|,s_3,s_4),
\end{equation*}
for each multi-indices $\alpha$ such that $|\alpha |\le 2s_2$.
\end{defn}

\par

It follows from the following lemma that symbol classes of the form
$\mho _{(\omega )}^{s,\rho}(\rr {2d})$ are interesting also in the
classical theory.

\par

\begin{lemma}\label{connection}
Let $\rho \in [0,1]$, $\omega \in \mathscr P_0 (\rr {2d}\oplus \rr
{2d})$ and $\omega _0\in \mathscr P _{\rho ,0} (\rr {2d})$ be such
that
$$
\omega _0(x,\xi )=\omega (x,\xi ,0,0).
$$
Then the following conditions are equivalent:
\begin{enumerate}
\item $a\in S^{(\omega _0)}_{\rho ,0} (\rr {2d})$,
i.{\,}e. \eqref{Somegadef} holds for $\omega =\omega _0$;

\vrum

\item $\omega _0^{-1}a\in S^0_{\rho ,0}$;

\vrum

\item $\eabs x^{-s_4}a\in \bigcap _{s_1,s_2,s_3\ge 0}\mho
_{(\omega )}^{s,\rho}(\rr {2d})$.
\end{enumerate}
\end{lemma}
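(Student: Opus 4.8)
The equivalence (1) $\Leftrightarrow$ (2) is immediate from the very definitions of $S^{(\omega _0)}_{\rho ,0}$ and $S^0_{\rho ,0}$ together with the assumption that $\omega _0\in \mathscr P_{\rho ,0}$, since differentiating the product $\omega _0\cdot (\omega _0^{-1}a)$ by the Leibniz rule and using $\partial ^\alpha _x\partial ^\beta _\xi \omega _0/\omega _0 \in L^\infty \eabs \xi ^{-\rho |\beta|}$ converts decay estimates for $a$ relative to $\omega _0$ into the standard Hörmander estimates for $\omega _0^{-1}a$, and conversely. So the substance of the lemma is the equivalence of these with (3).

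The plan for (2) $\Rightarrow$ (3) is to localize in the frequency variable and reduce to a statement about $S^0_{\rho ,0}$ sitting inside weighted $M^{\infty ,1}$ spaces. First I would record that if $b\in S^0_{\rho ,0}(\rr {2d})$ then, for every multi-index $\alpha$, $\eabs \xi ^{-\rho|\alpha|}\partial ^\alpha _\xi b\in S^0_{\rho ,0}$ as well, and that membership of a symbol $c$ in $M^{\infty ,1}_{(1/m)}(\rr {2d})$ for a weight $m\in \mathscr P_0(\rr{2d})$ is governed by the decay of $V_\phi c$, which for symbols of type $S^0_{\rho ,0}$ can be estimated via the short-time Fourier transform estimates of Lemmas \ref{stftcompact} and \ref{stftproperties} after a partition of unity: writing $c = \sum _j c_j$ with $c_j$ frequency-localized to dyadic shells, $V_\phi c_j$ decays rapidly in the output frequency variable, and summing the resulting $L^\infty$-in-$(x,\xi)$ bounds over $j$ gives the $L^1$ integrability in $z$ required by \eqref{mtildenorm}-type norms (using Remark \ref{coorb} to pass between $M^{\infty,1}$ and the intermediate space if convenient). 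Unwinding the weight bookkeeping in \eqref{omegasrho}: with $s(\alpha)=(s_1,|\alpha|,s_3,s_4)$ the factor $\eabs \xi ^{-\rho|\alpha|}$ in $\omega _{s(\alpha),\rho}$ is exactly what absorbs the loss from $\partial ^\alpha _\xi$, the factor $\eabs x^{-s_4}$ is supplied by the explicit prefactor $\eabs x^{-s_4}$ in (3), and the factors $\eabs \zeta ^{-s_3}$, $\eabs z^{-s_1}$ (with $s_1,s_3\ge 0$ arbitrary) are harmless because the STFT of a smooth symbol decays rapidly in $\zeta$ and $z$. This is why the intersection over all $s_1,s_2,s_3\ge 0$ appears, and why $\rho\in[0,1]$ and $\omega\in\mathscr P_0$ are needed — the former so the weight $\omega _{s(\alpha),\rho}$ stays in $\mathscr P_0$, the latter so that $\omega(x,\xi,0,0)=\omega _0(x,\xi)$ controls $\omega(x,\xi,\zeta,z)$ up to polynomial factors in $\zeta,z$.

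For the converse (3) $\Rightarrow$ (1), I would argue that membership in $M^{\infty ,1}_{(1/\omega _{s(\alpha),\rho})}$ forces pointwise bounds: since $M^{\infty ,1}_{(1/m)}\hookrightarrow M^{\infty ,\infty}_{(1/m)} = \mathscr F L^\infty_{(1/m)}$ locally-type embeddings hold (Proposition \ref{p1.4}\,(2)), one extracts $|V_\phi(\partial ^\alpha_\xi a)(x,\xi,\zeta,z)|\le C\,\omega _{s(\alpha),\rho}(x,\xi,\zeta,z)^{-1}$, integrates in $z$ and uses the reconstruction formula $\partial^\alpha_\xi a(x,\xi) = (2\pi)^{-d/2}\iint V_\phi(\partial^\alpha_\xi a)(y,\eta,\xi-\eta,\dots)\,\phi(\dots)\,\cdots$ to recover $|\partial ^\alpha_\xi a(x,\xi)|\le C\,\eabs x^{s_4}\omega _0(x,\xi)\eabs\xi^{-\rho|\alpha|}$ for every $\alpha$; taking also $x$-derivatives is handled by the same scheme applied to $\partial^\beta_x\partial^\alpha_\xi a$, whose STFT is a modulation/translation of that of $a$, so no new weight factors in $x$-frequency are lost (consistent with the absence of a $\rho$ on the $\zeta$-exponent). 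Dividing by $\eabs x^{s_4}$ — i.e. using that (3) already carries the prefactor $\eabs x^{-s_4}$ — yields \eqref{Somegadef} with $\delta=0$ and $\omega =\omega _0$, which is (1).

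The main obstacle I anticipate is the weight bookkeeping in the forward direction: one must verify carefully that for $a$ with $\omega _0^{-1}a\in S^0_{\rho ,0}$, the function $V_\phi(\partial^\alpha_\xi a)$ is dominated by $\omega _{s(\alpha),\rho}^{-1}$ up to an $L^1(dz)$ factor \emph{uniformly}, which requires (a) controlling how $\omega _0(x,\xi)$ interacts with translations/modulations in the STFT — here $\omega _0\in\mathscr P_{\rho,0}$ and $\omega\in\mathscr P_0$ give the moderateness needed — and (b) handling the $\rho$-dependent anisotropy: $\partial^\alpha_\xi$ costs $\eabs\xi^{-\rho|\alpha|}$, not $\eabs\xi^{-|\alpha|}$, so one must frequency-decompose into shells $|\xi|\sim 2^k$ and track that on each shell the relevant symbol estimates hold with the shell-adapted scaling, then resum. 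The compactly-supported STFT estimate (Lemma \ref{stftcompact}) applied shell-by-shell, combined with the $\mathscr P_0$ moderateness, is the engine that makes this resummation converge, and getting the exponents to match \eqref{omegasrho} exactly is the one genuinely delicate computation.
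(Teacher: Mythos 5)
Your treatment of (1) $\Leftrightarrow$ (2) matches the paper's (multiplication by $\omega _0^{\pm 1}$, using $\omega _0\in S^{(\omega _0)}_{\rho ,0}$ and $\omega _0^{-1}\in S^{(1/\omega _0)}_{\rho ,0}$). For the equivalence with (3) you take a genuinely different and much heavier route. The paper makes three quick normalizations: (3) is invariant in $s_4$; the two-sided bound $C^{-1}\omega _0(x,\xi )\eabs \zeta ^{-N}\eabs z^{-N}\le \omega (x,\xi ,\zeta ,z)\le C\omega _0(x,\xi )\eabs \zeta ^{N}\eabs z^{N}$ lets one replace $\omega$ by $\omega _0$ inside the intersection over $s_1,s_3$; and $a\mapsto \omega _0^{-1}a$ is a homeomorphism from $M^{\infty ,1}_{(\omega _1/\omega _0)}$ to $M^{\infty ,1}_{(\omega _1)}$ by Theorem 2.2 in \cite{To8}. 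This reduces everything to $\omega =\omega _0=1$, where the claim is exactly \eqref{moreidentities} combined with the elementary fact that $a\in S^0_{\rho ,0}$ iff $\partial _x^\alpha \partial _\xi ^\beta a\in S^{-\rho |\beta |}_{0,0}$ for all $\alpha ,\beta$. What you propose instead---dyadic shells, STFT decay estimates and resummation for the forward inclusion, the inversion formula for the converse---amounts to reproving \eqref{moreidentities} from scratch; it can be carried through and is more self-contained, but the step you yourself flag as the one delicate computation is precisely the content of the cited Theorem 2.2 in \cite{To8}, which is why the paper's proof is short. Two small cautions if you pursue your route: the identification $M^{\infty ,\infty}_{(1/m)}=\mathscr FL^\infty _{(1/m)}$ you invoke is only valid locally (for compactly supported distributions), though all you actually need is the pointwise bound $|V_\phi c|\le Cm$; and since (3) places no direct condition on $x$-derivatives of $a$, the bounds on $\partial _x^\beta \partial _\xi ^\alpha a$ must be extracted from the arbitrariness of $s_1$ and $s_3$, i.{\,}e. from rapid decay of the short-time Fourier transform in both frequency variables---exactly what the identification with $S^{-\rho |\alpha |}_{0,0}$ encodes.
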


\par

For the proof of Lemma \ref{connection} we note that
\begin{equation}\label{moreidentities}
\underset {s\ge 0} \ttbigcap M^{\infty ,1}_{(v_{r,s})}(\rr {2d}) =
S^{r}_{0,0}(\rr {2d}),  \quad   v_{r ,s}(x,\xi ,\zeta ,z) =
\eabs \xi ^{-r} \eabs \zeta ^s\eabs z^s,
\end{equation}
which follows from Theorem 2.2 in \cite{To8} (see also Remark 2.18 in
\cite{HTW}).

\par

\begin{proof}
In order to prove the equivalence between (1) and (2) we
note that the condition $\omega _0\in \mathscr P_{\rho ,0}$ implies
that $\omega _0\in S^{(\omega _0)}_{\rho ,0} (\rr {2d})$ and $\omega
_0^{-1}\in S^{(1/\omega _0)}_{\rho ,0}(\rr {2d})$. Hence, if
$a\in S^{(\omega _0)}_{\rho ,0} (\rr {2d})$, then it follows by
straight-forward computations that
$$
\omega _0^{-1}a \in S^{(1/\omega _0)}_{\rho ,0}\cdot S^{(\omega
_0)}_{\rho ,0} = S^{(\omega _0 ^{-1}\omega _0)}_{\rho ,0} =
S^{(1)}_{\rho ,0}= S^0_{\rho ,0}
$$
(see also Lemma 18.4.3 in \cite{Ho1}). This proves that (1) implies
(2), and in the same way the opposite implication follows.

\par

Next we consider (3). We observe that for some positive constants $C$
and $N$ we have
$$
C^{-1}\omega _0(x,\xi )\eabs \zeta ^{-N}\eabs z^{-N} \le \omega
(x,\xi ,\zeta ,z) \le C\omega _0(x,\xi )\eabs \zeta ^{N}\eabs z^{N} ,
$$
which implies that
$$
\underset {s_1,s_2,s_3\ge 0} \ttbigcap \mho _{(\omega )}^{s,\rho}(\rr
{2d}) = \underset {s_1,s_2,s_3\ge 0} \ttbigcap \mho _{(\omega
_0)}^{s,\rho}(\rr {2d}).
$$
Since the map $a\mapsto \omega _0^{-1}a$ is a homeomorphism from
$M^{\infty ,1}_{(\omega _1/\omega _0)}$ to $M^{\infty ,1}_{(\omega
_1)}$ when $\omega _1\in \mathscr P$, by Theorem 2.2 in \cite{To8}, we
may assume that $\omega =\omega _0=1$. Furthermore we may assume that
$s_4=0$, since (3) is invariant under the choice of $s_4$. For such
choices of parameters, the asserted equivalences can be formulated as
$$
\underset {s_1,s_2,s_3\ge 0} \ttbigcap \mho _{(\omega )}^{s,\rho}(\rr
{2d}) = S^0_{\rho ,0}(\rr {2d}),\qquad \omega =1.
$$

\par

The result is now an immediate consequence of \eqref{moreidentities}
and the fact that $a\in S^0_{\rho ,0}(\rr {2d})$, if and only if
for each multi-indices $\alpha$ and $\beta$, it holds
$$
\partial _x^\alpha \partial _\xi ^\beta a\in
 S^{-\rho |\beta |}_{0,0}(\rr {2d}).
$$
The proof is complete.
\end{proof}

\par

\section{Wave front sets with respect to Fourier
Lebesgue spaces}\label{sec2}

\par

In this section we define wave-front sets with respect to Fourier
Lebesgue spaces, and show some basic properties.

\par

Assume that $\omega \in \mathscr P(\rr {2d})$, $\Gamma
\subseteq \rr d\setminus 0$ is an open cone and $q\in [1,\infty ]$
are fixed. For any $f\in \mathscr S'(\rr d)$, let
\begin{equation}
|f|_{\mathscr FL^{q,\Gamma}_{(\omega )}} = |f|_{\mathscr
FL^{q,\Gamma}_{(\omega ), x}}
\equiv
\Big ( \int _{\Gamma} |\widehat f(\xi )\omega (x,\xi )|^{q}\, d\xi
\Big )^{1/q}\label{skoff}
\end{equation}
(with obvious interpretation when $q=\infty$). We
note that $|\cdo |_{\mathscr FL^{q,\Gamma}_{(\omega ), x}}$ defines
a semi-norm on $\mathscr S'$ which might attain the value $+\infty$.
Since $\omega $ is $v$-moderate for some $v \in \mathscr P(\rr
{2d})$, it follows that different $x \in \rr d$ gives rise to
equivalent semi-norms $ |f|_{\mathscr FL^{q,\Gamma}_{(\omega ),
x}}$. Furthermore, if $\Gamma =\rr d\setminus 0$, $f\in
\mathscr FL^{q}_{(\omega )}(\rr d)$ and $q<\infty$, then
$|f|_{\mathscr FL^{q,\Gamma}_{(\omega ), x}}$ agrees with the Fourier
Lebesgue norm $\nm f{\mathscr FL^{q}_{(\omega ), x}}$ of $f$.

\par

For the sake of notational convenience we set
\begin{equation} \label{notconv}
\mathcal B=\mathscr FL^q_{(\omega )}=\mathscr FL^q_{(\omega )} (\rr
d), \;\;\;
\mbox{and}
\;\;\;
|\cdo |_{\mathcal B(\Gamma )}=|\cdo |_{\mathscr
FL^{q,\Gamma}_{(\omega ), x}}.
\end{equation}
We let $ \Theta _{\mathcal B}(f)=\Theta _{\mathscr FL^{q} _{(\omega
)}} (f)$ be the set of all $ \xi \in \rr d\setminus 0 $ such that
$|f|_{\mathcal B(\Gamma )} < \infty$, for some $
\Gamma = \Gamma_{\xi}$.  We also let $\Sigma_{\mathcal B} (f)$
be the complement of $ \Theta_{\mathcal
B} (f)$ in $\rr d\setminus 0 $. Then
$\Theta_{ \mathcal B} (f)$ and $\Sigma_{\mathcal
B} (f)$ are open respectively
closed subsets in $\rr d\setminus 0$, which are independent of
the choice of $ x \in \rr d$ in \eqref{skoff}.

\par

\begin{defn}\label{wave-frontdef}
Let  $q\in [1,\infty ]$, $\omega \in \mathscr P(\rr {2d})$, $\mathcal
B$ be as in \eqref{notconv}, and let
$X$ be an open subset of $\rr d$.
The wave-front set of
$f\in \mathscr D'(X)$,
$
\WF _{\mathcal B}(f)  \equiv  \WF _{\mathscr FL^q_{(\omega )}}(f)
$
with respect to $\mathcal B$ consists of all pairs $(x_0,\xi_0)$ in
$X\times (\rr d \setminus 0)$ such that
$
\xi _0 \in  \Sigma _{\mathcal B} (\fy f)
$
holds for each $\fy \in C_0^\infty (X)$ such that $\fy (x_0)\neq
0$.
\end{defn}

\par

We note that $\WF  _{\mathcal B}(f)$ is a closed set in $\rr d\times
(\rr d\setminus 0)$, since it is obvious that its complement is
open. We also note that if $ x\in \rr d$ is fixed and $\omega _0(\xi
)=\omega (x,\xi )$, then $\WF _{\mathcal B} (f)=\WF _{\mathscr
FL^q_{(\omega _0)}}(f)$, since $\Sigma _{\mathcal B}$ is independent
of $x$.

\par

The following theorem shows that wave-front sets with respect to
$\mathscr FL^q_{(\omega )}$ satisfy appropriate micro-local
properties. It also shows that such wave-front sets are decreasing
with respect to the parameter $q$, and increasing with respect to the
weight $\omega$.

\par

\begin{thm}\label{wavefrontprop1}
Let $X \subseteq \rr d$ be open,
$q,r\in [1,\infty ]$ and $\omega ,\vartheta \in \mathscr P(\rr {2d})$
be such that
\begin{equation}\label{qandomega}
q\le r,\quad \text{and}\quad \vartheta (x,\xi )\le C\omega
(x,\xi ),
\end{equation}
for some constant $C$ which is independent of $x,\xi \in \rr d$. Also
let $\mathcal B$  be as in \eqref{notconv} and put
$ {\mathcal B _0} =\mathscr FL^r_{(\vartheta )}=\mathscr FL^r_{(\vartheta )}
(\rr d)$. If $f\in \mathscr D'(X)$ and $\fy \in C^\infty
(X)$, then $\WF _{ {\mathcal B _0} }(\fy \, f)\subseteq \WF _{\mathcal
B}(f)$.
\end{thm}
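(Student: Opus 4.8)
The statement claims that multiplication by a smooth function $\fy \in C^\infty(X)$ cannot enlarge the wave-front set, and simultaneously that passing from $\mathscr FL^q_{(\omega)}$ to the weaker space $\mathscr FL^r_{(\vartheta)}$ (with $q \le r$, $\vartheta \lesssim \omega$) cannot enlarge it either. The plan is to prove the statement by showing the contrapositive on the level of the cone sets $\Sigma_{\mathcal B}$: if $(x_0,\xi_0) \notin \WF_{\mathcal B}(f)$, then $(x_0,\xi_0) \notin \WF_{\mathcal B_0}(\fy f)$. So fix a point $(x_0,\xi_0)$ not in $\WF_{\mathcal B}(f)$ and an arbitrary $\psi \in C_0^\infty(X)$ with $\psi(x_0) \ne 0$; we must show $\xi_0 \in \Theta_{\mathcal B_0}(\psi \fy f)$, i.e. $|\psi \fy f|_{\mathscr FL^{r,\Gamma}_{(\vartheta),x}} < \infty$ for some open cone $\Gamma = \Gamma_{\xi_0}$.

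\textbf{Step 1: reduce to a single cut-off.} Since $\psi \fy \in C_0^\infty(X)$ and $(\psi\fy)(x_0) = \psi(x_0)\fy(x_0)$, first note that if $\fy(x_0) = 0$ we may instead use the defining property of $\WF_{\mathcal B}(f)$ with any test function not vanishing at $x_0$; in fact the cleanest route is: because $(x_0,\xi_0) \notin \WF_{\mathcal B}(f)$, Definition \ref{wave-frontdef} gives an open cone $\Gamma_0 = \Gamma_{\xi_0}$ and a function $\chi \in C_0^\infty(X)$ with $\chi \equiv 1$ near $x_0$ and $|\chi f|_{\mathcal B(\Gamma_0)} < \infty$. (One obtains such a $\chi$ by standard arguments: if some $\fy_1$ with $\fy_1(x_0)\ne0$ has $\xi_0 \notin \Sigma_{\mathcal B}(\fy_1 f)$, a partition-of-unity/conic-refinement argument — exactly as in the theory of classical wave-front sets — yields a $\chi$ as above, possibly shrinking the cone.) Then write $\psi \fy f = g \cdot (\chi f)$ where $g = \psi \fy \in C_0^\infty(X)$, using that $\chi \equiv 1$ on a neighbourhood of $\supp(\psi\fy)$ — wait, that inclusion need not hold; instead pick $\chi \equiv 1$ on a neighbourhood of $x_0$ only, and cover $\supp(\psi\fy)$ differently. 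The correct device is: it suffices to prove the estimate with $\psi$ supported in an arbitrarily small neighbourhood of $x_0$, because a general $\psi$ is a finite sum of such pieces plus a piece vanishing near $x_0$ — but the latter piece still needs control. Cleanly: it is a standard and easily verified fact that $(x_0,\xi_0)\notin\WF_{\mathcal B}(f)$ is equivalent to the existence of \emph{one} $\chi\in C_0^\infty(X)$ with $\chi(x_0)\ne0$ and a cone $\Gamma_0$ such that $|\chi f|_{\mathcal B(\Gamma_0)}<\infty$; so fix such $\chi,\Gamma_0$. Now for arbitrary $\psi\in C_0^\infty(X)$ with $\psi(x_0)\ne0$, we do \emph{not} get $\psi f$ controlled, but rather we observe that to compute $\WF_{\mathcal B_0}(\fy f)$ we are free to choose the test function, and we choose it to be $\chi$ itself (or $\chi^2$), since $\chi(x_0)\ne0$.

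\textbf{Step 2: the key multiplicative estimate.} So it remains to show: if $h := \chi f$ satisfies $|h|_{\mathscr FL^{q,\Gamma_0}_{(\omega),x}} < \infty$ and $g := \chi\fy \in C_0^\infty$, then for a possibly smaller cone $\Gamma \Subset \Gamma_0$ one has $|g h|_{\mathscr FL^{r,\Gamma}_{(\vartheta),x}} < \infty$. Since $\widehat{gh} = (2\pi)^{-d/2}\,\widehat g * \widehat h$ and $\widehat g \in \mathscr S$ (as $g\in C_0^\infty\subseteq\mathscr S$), this is a convolution estimate. Split the convolution integral over $\eta$ into the region where $\xi - \eta \in \Gamma_0$ and its complement. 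On the first region, use $|\widehat g(\eta)| \lesssim_N \eabs\eta^{-N}$ for every $N$, the moderateness $\vartheta(x,\xi) \le C\,\vartheta(x,\xi-\eta)\,v(\eta) \le C'\omega(x,\xi-\eta)v(\eta)$ (combining $\vartheta\le C\omega$ with $v$-moderateness of $\omega$), absorb $v(\eta)$ into a power of $\eabs\eta$ dominated by $\widehat g$, and then estimate $\|\widehat h(\cdot)\omega(x,\cdot)\|_{L^q(\Gamma_0)} \le |h|_{\mathscr FL^{q,\Gamma_0}_{(\omega),x}} < \infty$; Young's inequality (with the trivial embedding $L^q\hookrightarrow L^q$ and $\widehat g\in L^1$) together with $L^q \subseteq$ weighted $L^r$ locally — no: use instead Young's inequality $\|\widehat g * (\mathbf 1_{\Gamma_0}\widehat h\,\omega)\|_{L^r} \le \|\widehat g\|_{L^s}\,\|\mathbf 1_{\Gamma_0}\widehat h\,\omega\|_{L^q}$ with $1 + 1/r = 1/s + 1/q$, which is admissible since $q\le r$ forces $s\ge1$, and $\widehat g\in\mathscr S\subseteq L^s$. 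On the second region, where $\xi\in\Gamma$ (a small cone around $\xi_0$) but $\xi-\eta\notin\Gamma_0$, one has $|\eta|\gtrsim|\xi|$ (standard conic geometry: $\Gamma\Subset\Gamma_0$), so $|\widehat g(\eta)|\lesssim_N \eabs\eta^{-N}\lesssim_N\eabs\xi^{-N'}$ beats the at-most-polynomial growth of $\omega,\vartheta$ and the polynomial growth of $\widehat h$ (recall $\widehat h = \widehat{\chi f}$ is a tempered distribution of polynomial growth since $\chi f\in\mathscr E'$), giving an $L^r(\Gamma)$ bound directly. Adding the two contributions yields $|gh|_{\mathscr FL^{r,\Gamma}_{(\vartheta),x}} < \infty$, as required.

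\textbf{Main obstacle.} The genuinely delicate part is Step 2's conic splitting: one must be careful that $\Gamma$ is chosen small enough (strictly inside $\Gamma_0$) so that on the "bad" region $\xi - \eta \notin \Gamma_0$ with $\xi \in \Gamma$ we really get $|\eta| \ge c|\xi|$, and simultaneously that on the "good" region the weight shift $\vartheta(x,\xi) \lesssim \omega(x,\xi-\eta)\,v(\eta)$ is uniform and the factor $v(\eta)$ — which is merely polynomially bounded — is genuinely dominated by the Schwartz decay of $\widehat g$. Both points are routine in the classical ($L^\infty$, smooth) setting but here one must track the $L^q \to L^r$ change via Young's inequality with the right exponent, and verify the admissibility $1/s = 1 + 1/r - 1/q \in (0,1]$, which uses precisely the hypothesis $q\le r$. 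The weight hypothesis $\vartheta\le C\omega$ enters only to pass from the $\omega$-weighted control of $h$ to the $\vartheta$-weighted control of $gh$, and the smoothness of $\fy$ enters only through $g\in C_0^\infty\subseteq\mathscr S$. No further input is needed.
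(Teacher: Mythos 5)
Your proposal is correct and follows essentially the same route as the paper: reduce to a compactly supported localization, write $\mathscr F(gh)$ as a convolution with a Schwartz function, split the frequency integral into a cone $\Gamma_1$ and its complement, apply Young's inequality with $1/q_0=1+1/r-1/q$ (admissible precisely because $q\le r$) on the main piece, and use the conic estimate $|\xi-\eta|\ge c\max(|\xi|,|\eta|)$ together with the polynomial bound on $\widehat{\chi f}$ on the remainder. The only cosmetic difference is that the paper reduces at the outset to $\vartheta=\omega$ and $\omega=\omega(\xi)$, whereas you carry the two weights through the estimate; the substance is identical.
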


\par

\begin{proof}
It suffices to prove
\begin{equation}\label{chi-subsetAA}
\Sigma_{ {\mathcal B _0} } (\fy f) \subseteq
\Sigma_{\mathcal B} (f).
\end{equation}
when $ \fy \in  \mathscr S(\rr d)$ and $f\in
\mathscr E'(\rr d)$, since the statement only involve local
assertions. For the same reasons we may assume that $\omega (x,\xi )
=\omega (\xi )$ is independent of $x$. It is also no restrictions to
assume that $\vartheta =\omega$.

\par

Let $ \xi_0 \in \Theta_{\mathcal B} (f)$,
and choose open cones $\Gamma _1$ and $\Gamma_2$ in $\rr d$ such that
$\overline {\Gamma _2}\subseteq \Gamma _1$. Since $f$ has compact
support, it follows that $|\widehat f(\xi )\omega(\xi )|\le
C\eabs \xi ^{N_0}$ for some positive constants $C$ and $N_0$. The result
therefore follows if we prove that for each $N$, there are constants
$C_N$ such that
\begin{multline}\label{cuttoff1}
|\fy f|_{ {\mathcal B _0} (\Gamma _2)}\le C_N \Big (|
f|_{\mathcal B(\Gamma _1)} +\sup _{\xi \in \rr
d} \big ( |\widehat f(\xi )\omega(\xi )|\eabs \xi ^{-N} \big )
\Big )
\\[1ex]
\text{when}\quad \overline \Gamma _2\subseteq \Gamma
_1\quad \text{and}\quad N=1,2,\dots .
\end{multline}

\par

By using the fact that $\omega$ is $v$-moderate for some $v\in
\mathscr P(\rr d) $, and letting $F(\xi )=|\widehat f(\xi )\omega
(\xi ) |$ and $\psi (\xi )=|\widehat \fy (\xi )v (\xi )|$, it
follows that $\psi$ turns rapidly to zero at infinity and
\begin{multline*}
|\fy f| _{ {\mathcal B _0}(\Gamma _2)} = \Big (\int
_{\Gamma _2}|\mathscr F(\fy f)(\xi )\omega(\xi )|^{r}\, d\xi
\Big )^{1/r}
\\[1ex]
\le C\Big (\int _{\Gamma _2}\Big ( \int _{\rr {d}} \psi (\xi -\eta
)F(\eta )\, d\eta \Big )^{r}\, d\xi \Big )^{1/r} \le C(J_1+J_2)
\end{multline*}
for some constant $C$, where
\begin{align*}
J_1 &= \Big (\int _{\Gamma _2}\Big ( \int _{\Gamma _1}\psi (\xi
-\eta )F(\eta )\, d\eta \Big )^{r}\, d\xi \Big )^{1/r}
\intertext{and}
J_2 &= \Big (\int _{\Gamma _2}\Big ( \int _{\complement \Gamma _1}\psi
(\xi -\eta )F(\eta )\, d\eta \Big )^{r}\, d\xi \Big )^{1/r}.
\end{align*}

\par

Let $q_0$ be chosen such that $1/q_0+1/q=1+1/r$, and let $\chi
_{\Gamma _1}$ be the characteristic function of $\Gamma _1$. Then
Young's inequality gives
\begin{multline*}
J_1 \le  \Big (\int _{\rr {d}} \Big ( \int _{\Gamma _1}\psi (\xi
-\eta )F(\eta )\, d\eta \Big )^{r}\, d\xi \Big )^{1/r}
\\[1ex]
=\nm {\psi *(\chi _{\Gamma _1} F)}{L^{r}} \le \nm \psi {L^{q_0}}\nm
{\chi _{\Gamma _1}
F}{L^{q}} = C_\psi |f|_{\mathcal B(\Gamma _1)},
\end{multline*}
where $C_\psi = \nm \psi {L^{q_0}}<\infty$ since $\psi$ is turns
rapidly to zero at infinity.

\par

In order to estimate $J_2$, we note that the conditions $\xi \in
\Gamma _2$, $\eta \notin \Gamma _1$ and the fact that $\overline
{\Gamma _2}\subseteq \Gamma _1$ imply that  $|\xi -\eta |>c\max
(|\xi|,|\eta |)$ for some constant $c>0$, since this is true when
$1=|\xi |\ge |\eta|$. Since $\psi$ turns rapidly to zero at
infinity, it follows that for each $N_0> d$ and $N\in \mathbf N$ such
that $N > N_0$, we have
\begin{multline*}
J_2 \le  C_1\Big (\int _{\Gamma _2}\Big ( \int _{\complement
\Gamma _1}\eabs {\xi -\eta }^{-(2N_0+N)} F(\eta )\, d\eta \Big
)^{r}\, d\xi \Big )^{1/r}
\\[1ex]
\le C_2\Big (\int _{\Gamma _2}\Big ( \int _{\complement \Gamma
_1}\eabs {\xi}^{-N_0} \eabs \eta ^{-N_0}(\eabs \eta ^{-N} F(\eta
))\, d\eta \Big )^{r}\, d\xi \Big )^{1/r}
\\[1ex]
\le C_3\sup _{\eta \in \rr d}|\eabs \eta ^{-N} F(\eta ))|,
\end{multline*}
for some constants $C_1, C_2, C_3 > 0$.
This proves \eqref{cuttoff1}. The proof is complete.
\end{proof}

\par

\section{Wave-front sets for pseudo-differential
operators with smooth symbols}\label{sec3}

\par

In this section we establish mapping properties for
pseudo-differential operators on wave-front sets of Fourier Lebesgue
types. More precisely, we prove the following result (cf. (*) in the
abstract).

\par

\begin{thm}\label{mainthm2}
Let $\rho >0$, $\omega \in \mathscr P(\rr
{2d})$, $\omega _0 \in \mathscr P_{\rho ,0}(\rr
{2d})$, $a\in S^{(\omega _0)}_{\rho ,0} (\rr {2d})$, $f\in \mathscr
S'(\rr d)$ and $q\in [1,\infty ]$. Also let
$$
\mathcal B=\mathscr FL^q_{(\omega )}=\mathscr FL^q_{(\omega )} (\rr
d)\quad \text{and} \quad
\mathcal C=\mathscr FL^q_{(\omega /\omega _0)}=\mathscr FL^q_{(\omega
/\omega _0)}(\rr d).
$$
Then
\begin{equation}\label{wavefrontemb1}
\WF _{\mathcal C} (\op (a)f) \subseteq
\WF _{\mathcal B} (f)
\subseteq \WF _{\mathcal C} (\op (a)f)\ttbigcup
\Char _{(\omega )}(a).
\end{equation}
\end{thm}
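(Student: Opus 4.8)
The plan is to prove the two inclusions in \eqref{wavefrontemb1} separately, reducing everything to a local, conic statement about Fourier-Lebesgue seminorms. For the first inclusion $\WF_{\mathcal C}(\op(a)f)\subseteq\WF_{\mathcal B}(f)$, suppose $(x_0,\xi_0)\notin\WF_{\mathcal B}(f)$. Then there is $\fy\in C_0^\infty$ with $\fy(x_0)\neq 0$ and a conical neighbourhood $\Gamma_0$ of $\xi_0$ with $|\fy f|_{\mathscr FL^{q,\Gamma_0}_{(\omega)}}<\infty$. I want to produce $\psi\in C_0^\infty$ with $\psi(x_0)\neq 0$ and a smaller cone $\Gamma_1\Subset\Gamma_0$ with $|\psi\,\op(a)f|_{\mathscr FL^{q,\Gamma_1}_{(\omega/\omega_0)}}<\infty$. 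The standard device is to write $\psi\,\op(a)(\fy f)$ plus a remainder $\psi\,\op(a)((1-\fy)f)$; choosing $\fy\equiv 1$ near $\supp\psi$ makes the latter term have a kernel that is $C^\infty$ and rapidly decreasing off the diagonal, hence smoothing, and one must check it maps into $\mathscr FL^{q,\Gamma_1}_{(\omega/\omega_0)}$ microlocally — essentially a pseudolocal property. The main term $\psi\,\op(a)(\fy f)$ is handled by a symbol-cutoff argument: split $a=a_1+a_2$ where $a_1$ is supported in $x\in\supp\psi$, $\xi$ in a cone slightly larger than $\Gamma_1$ but inside $\Gamma_0$, and $a_2$ vanishes there. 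The operator $\op(a_2)$ applied to something with wavefront in $\Gamma_0$-directions is again smoothing in the relevant region (non-intersecting cones in the $\xi$-variable give rapid decay of the symbol-convolution kernel), while $\op(a_1)(\fy f)$ is estimated directly: on the Fourier side $\widehat{\op(a_1)(\fy f)}(\eta)=(2\pi)^{-d}\int \widehat{a_1}(\eta-\zeta,\zeta)\widehat{\fy f}(\zeta)\,d\zeta$, and since $a_1\in S^{(\omega_0)}_{\rho,0}$ one has a kernel estimate $|\widehat{a_1}(\eta-\zeta,\zeta)|\le C_N\eabs{\eta-\zeta}^{-N}\omega_0(x,\zeta)\eabs\zeta^{\text{(something)}}$ locally uniformly in $x$; splitting the $\zeta$-integral into $\zeta\in\Gamma_0$ (use $|\fy f|_{\mathscr FL^{q,\Gamma_0}_{(\omega)}}<\infty$, Young's inequality with a rapidly decreasing kernel, and the moderateness estimate $\omega_0(x,\zeta)/\omega(x,\zeta)\le C\omega_0(x,\eta)/\omega(x,\eta)\cdot\eabs{\eta-\zeta}^{M}$ absorbed into the rapid decay) and $\zeta\notin\Gamma_0$ (use polynomial growth of $\widehat{\fy f}$ and rapid decay in $\eta-\zeta$) gives finiteness of $|\psi\,\op(a_1)(\fy f)|_{\mathscr FL^{q,\Gamma_1}_{(\omega/\omega_0)}}$.

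For the second inclusion $\WF_{\mathcal B}(f)\subseteq\WF_{\mathcal C}(\op(a)f)\cup\Char_{(\omega)}(a)$, the idea is the usual parametrix/microlocal-ellipticity argument. Take $(x_0,\xi_0)\notin\WF_{\mathcal C}(\op(a)f)$ and $(x_0,\xi_0)\notin\Char_{(\omega)}(a)$; the latter (Definition \ref{defchar}) gives $b\in S^{(1/\omega_0)}_{\rho,0}$, $c\in S^0_{\rho,0}$ unitary near $(x_0,\xi_0)$, and $h\in S^{-\mu}_{\rho,0}$ with $ba=c+h$. Form the operator $\op(b)\op(a)$. By the standard symbolic calculus (composition, valid here since $\rho>0=\delta$), $\op(b)\op(a)=\op(c)+\op(r)$ where $r\in S^{-\mu}_{\rho,0}$, with $\mu=\rho>0$; iterating the construction, or simply using that $c$ equals $1$ on a conic neighbourhood of $(x_0,\xi_0)$ and $\op(r)$ has symbol of negative order, one gets $\op(e)\op(a)=\mathrm{Id}+\op(r_N)$ microlocally near $(x_0,\xi_0)$ with $r_N\in S^{-N}_{\rho,0}$ for any $N$ after a Neumann-series construction of the parametrix $\op(e)$, $e\in S^{(1/\omega_0)}_{\rho,0}$. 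Applying this to $f$: microlocally near $(x_0,\xi_0)$, $f=\op(e)\op(a)f-\op(r_N)f$. The first term on the right: since $(x_0,\xi_0)\notin\WF_{\mathcal C}(\op(a)f)$ and $e\in S^{(1/\omega_0)}_{\rho,0}$, the already-proven first inclusion (with $\omega$ replaced by $\omega/\omega_0$ and $\omega_0$ by $1/\omega_0$, so the target weight becomes $(\omega/\omega_0)\cdot\omega_0=\omega$) shows $(x_0,\xi_0)\notin\WF_{\mathcal B}(\op(e)\op(a)f)$. The second term $\op(r_N)f$: with $N$ large, a symbol of order $-N$ lands $f$ into $\mathscr FL^q_{(\omega\eabs\xi^N)}\supseteq\mathscr FL^q_{(\omega)}$... — more precisely one uses that $\op(r_N)$ maps $\mathscr S'$ into a space locally in $\mathscr FL^q_{(\omega)}$ provided $N$ exceeds the loss of decay controlled by $\omega$, because $f\in\mathscr S'$ has finite $\mathscr FL^q$-type seminorms against $\eabs\xi^{-M}$ for some $M$. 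Hence $(x_0,\xi_0)\notin\WF_{\mathcal B}(f)$.

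The technical heart — and the step I expect to be the main obstacle — is making the symbol splitting $a=a_1+a_2$ and the remainder estimates fully rigorous, in particular tracking the weights. One must verify that the cutoff pieces genuinely stay in the symbol classes $S^{(\omega_0)}_{\rho,0}$ (cutting in $\xi$ by a function homogeneous of degree $0$ is fine since such cutoffs lie in $S^0_{1,0}\subseteq S^0_{\rho,0}$), and that the "off-cone" operator $\op(a_2)$ acting on $\fy f$ with $\WF_{\mathcal B}(\fy f)$ in $\Gamma_0$-directions really does produce something with no $\mathcal C$-wavefront in $\Gamma_1$. The cleanest route is to prove a \emph{pseudolocal-in-frequency lemma}: if $a\in S^{(\omega_0)}_{\rho,0}$ and $a(x,\xi)=0$ for $x$ near $x_0$, $\xi$ in a cone $\Gamma'$, then $\op(a)$ maps $\{g:|g|_{\mathscr FL^{q,\Gamma'}_{(\omega)}}<\infty,\ g\in\mathscr E'\}$ into functions that are microlocally $\mathscr FL^q_{(\omega/\omega_0)}$ in every subcone $\Gamma''\Subset\Gamma'$; this is proved by the Fourier-side kernel bound and the geometric inequality $|\eta-\zeta|\ge c\max(|\eta|,|\zeta|)$ used already in the proof of Theorem \ref{wavefrontprop1}. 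Once this lemma and the analogous "main-term" estimate above are in place, both inclusions follow by assembling the pieces; the parametrix construction for the second inclusion is then routine symbolic calculus, using that $\mu=\rho-\delta=\rho>0$ guarantees the Neumann series converges in the appropriate asymptotic sense.
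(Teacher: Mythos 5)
Your proposal is correct and follows essentially the same route as the paper: the pseudolocal smoothing of $\fy_1\op(a)(\fy_2 f)$ for disjoint supports, the Fourier-side conic estimate $\mathscr F(\op(a)f)(\xi)=(2\pi)^{-d/2}\int(\mathscr F_1a)(\xi-\eta,\eta)\widehat f(\eta)\,d\eta$ split over $\Gamma_1$ and $\complement\Gamma_1$ with the geometric inequality $|\xi-\eta|\ge c\max(|\xi|,|\eta|)$, the iterated parametrix $\op(b_j)\op(a)=\op(c_j)+\op(h_j)$ with $h_j\in S^{-j\mu}_{\rho,0}$, the reuse of the first inclusion with weights $(\omega/\omega_0,1/\omega_0)$ for the middle term, and the off-cone estimate for $\op(1-c_j)f$ are exactly Propositions \ref{propmain1AA} and \ref{keyprop2AA}, Lemma \ref{pseudolemma3}, and the paper's final assembly. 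The only cosmetic deviation is your extra symbol splitting $a=a_1+a_2$ in the $\xi$-variable, which the paper avoids by splitting the frequency integral instead; the resulting estimates are the same.
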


\par

We need some preparations for the proof. The first proposition shows
that if $ x_0 \not\in \supp f $ then $ (x_0, \xi) \not\in
\WF _{\mathcal C} (\op (a)f) $ for every $\xi
\in \rr d \setminus 0$.

\par

\begin{prop}\label{propmain1AA}
Let $\omega \in \mathscr P(\rr {2d})$,  $\omega _0 \in \mathscr
P_{\rho ,\delta} (\rr {2d})$,
$0\le \delta \le \rho$, $0<\rho$, $\delta <1$, and let $a\in
S^{(\omega _0 )}_{\rho ,\delta}(\rr {2d})$. Also let $\mathcal C$ be
as in Theorem \ref{mainthm2} and let the operator
$L_a$ on $\mathscr S'(\rr d)$ be defined by the formula
\begin{equation}\label{Ladef}
(L_af)(x) \equiv  \fy _1(x)(\op (a)(\fy _2f))(x), \quad f\in \mathscr
S'(\rr d),
\end{equation}
where $\fy _1\in
C_0^{\infty}(\rr d)$ and $\fy _2 \in S_{0,0}^0(\rr d)$ are such that
$$
\supp \fy _1\bigcap \supp \fy _2=\emptyset .
$$
Then the kernel of $L_a$ belongs to
$\mathscr S(\rr {2d})$. In particular, the following is true:
\begin{enumerate}
\item $L_a =\op (a_0)$ for some $a_0\in \mathscr S(\rr {2d})$;

\vrum

\item $\WF _{\mathcal C}(L_af)=\emptyset$, for any given
$q\in [1,\infty ]$.
\end{enumerate}
\end{prop}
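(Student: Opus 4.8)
The plan is to reduce everything to the single claim that the distribution kernel $K$ of $L_a$ belongs to $\mathscr S(\rr {2d})$; statements (1) and (2) then follow with little effort. From \eqref{weylkernel} with $t=0$ one reads off that $\op (a)$ has kernel $K_{0,a}(x,y)=(2\pi )^{-d/2}(\mathscr F_2^{-1}a)(x,x-y)$, so $L_a$ has kernel $K(x,y)=\fy _1(x)\fy _2(y)K_{0,a}(x,y)$, which is a well-defined element of $\mathscr S'(\rr {2d})$ since $a\in S^{(\omega _0)}_{\rho ,\delta}\subseteq \mathscr S'(\rr {2d})$ and $\fy _1\otimes \fy _2$ is smooth with all derivatives of polynomial growth (compact support in $x$, bounded in $y$). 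Put $K_1=\supp \fy _1$, a compact set, and $d_0=\operatorname{dist}(K_1,\supp \fy _2)$. Since $K_1$ is compact, $\supp \fy _2$ is closed, and the two are disjoint, $d_0>0$, so $K$ is supported in $\sets {(x,y)}{x\in K_1,\ |x-y|\ge d_0}$.

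Writing $g=\mathscr F_2^{-1}a$, it therefore suffices to show that on the open set $\sets {(x,z)}{|z|>d_0/2}$ the distribution $g$ coincides with a smooth function such that, for every compact $K'\subseteq \rr d$, all multi-indices $\alpha ,\gamma$ and every $N$, one has $|\partial _x^\alpha \partial _z^\gamma g(x,z)|\le C\eabs z^{-N}$ for $x\in K'$ and $|z|>d_0/2$. Granting this, Leibniz' rule applied to $K(x,y)=(2\pi )^{-d/2}\fy _1(x)\fy _2(y)g(x,x-y)$ — using that $\fy _1,\fy _2$ and all their derivatives are bounded, and that $\eabs {x-y}$ dominates a constant times $\eabs y$ as long as $x$ stays in the compact set $K_1$ — shows that $K$ is smooth and that $y^\beta \partial ^\gamma K$ is bounded for all $\beta ,\gamma$; together with the compact support in $x$ this is exactly $K\in \mathscr S(\rr {2d})$.

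The estimate for $g$ on $\sets {(x,z)}{|z|>d_0/2}$ is the \emph{technical core}, and is obtained by a standard regularization and integration-by-parts argument. Fix $\chi \in C_0^\infty (\rr d)$ with $\chi =1$ near the origin and put $a_\ep (x,\xi )=a(x,\xi )\chi (\ep \xi )$; then $a_\ep \to a$ in $\mathscr S'(\rr {2d})$, hence $g_\ep :=\mathscr F_2^{-1}a_\ep \to g$ in $\mathscr S'$, while $g_\ep (x,z)=(2\pi )^{-d/2}\int a_\ep (x,\xi )e^{i\scal z\xi }\,d\xi$ is a genuine smooth integral (as $a_\ep (x,\cdo )$ has compact support). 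For $|z|>d_0/2$ one uses the identity $e^{i\scal z\xi }=|z|^{-2}\scal z{D_\xi }e^{i\scal z\xi }$ to integrate by parts $N$ times, moving $|z|^{-2N}\scal z{D_\xi }^N$ onto the amplitude $\xi ^\gamma \partial _x^\alpha a_\ep$ (the factors $\xi ^\gamma$ and $\partial _x^\alpha$ coming from $\partial _z^\gamma$ and $\partial _x^\alpha$ applied to $g_\ep$), which produces a prefactor $|z|^{-N}$. Since $\rho >0$, each $\xi$-derivative of $a$ gains a factor $\eabs \xi ^{-\rho }$, and since $\omega _0\in \mathscr P$ we have $\omega _0(x,\xi )\le C\eabs \xi ^{N_0}$ for $x\in K'$; so, choosing $N$ with $\rho N>N_0+d+|\gamma |+\delta |\alpha |+1$, the term in the Leibniz expansion in which no derivative hits the cutoff is dominated by $C|z|^{-N}\eabs \xi ^{-d-1}\in L^1_\xi$ uniformly in $\ep \le 1$ and converges pointwise to the right limit, while each of the remaining terms (in which at least one derivative falls on $\chi (\ep \cdo )$, whence a factor $\ep$ and support in $|\xi |\lesssim 1/\ep$) contributes $O(\ep ^{\,c})$ with $c>0$. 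Dominated convergence then yields both the local uniform convergence of $g_\ep$ and its derivatives on this region and the bound $|\partial _x^\alpha \partial _z^\gamma g(x,z)|\le C_N|z|^{-N}$; as $N$ is arbitrary, this is the required rapid decay.

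Finally, granting $K\in \mathscr S(\rr {2d})$: statement (1) follows because, by Subsection \ref{subsec1}, the map $a\mapsto K_{0,a}$ is a bijection onto the operator kernels which restricts to a bijection $\mathscr S(\rr {2d})\to \mathscr S(\rr {2d})$, so $L_a=\op (a_0)$ for some $a_0\in \mathscr S(\rr {2d})$; and statement (2) follows because an operator with kernel in $\mathscr S(\rr {2d})$ maps $\mathscr S'(\rr d)$ continuously into $\mathscr S(\rr d)$, so $L_af\in \mathscr S(\rr d)$ for every $f\in \mathscr S'(\rr d)$, and $\mathscr S(\rr d)\subseteq \mathscr FL^q_{(\vartheta )}(\rr d)$ for every $\vartheta \in \mathscr P(\rr {2d})$ and $q\in [1,\infty ]$ (because $\widehat g\, \vartheta (x,\cdo )$ is rapidly decreasing when $g\in \mathscr S$), whence $\Sigma _{\mathcal C}(\fy \, L_af)=\emptyset$ for all $\fy \in C_0^\infty (\rr d)$, i.e. $\WF _{\mathcal C}(L_af)=\emptyset$. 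Nothing here is deep; the only real work is the $\ep$-uniform bookkeeping in the integration-by-parts step and extracting enough powers of $|z|^{-1}$ to overcome the polynomial growth of $\omega _0$ on $K_1\times \rr d$.
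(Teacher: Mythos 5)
Your proof is correct and follows essentially the same route as the paper's: both establish that the kernel of $L_a$ is Schwartz by repeated integration by parts in $\xi$ against $e^{i\scal {x-y}\xi}$, trading the lower bound $|x-y|\ge d_0>0$ coming from the disjoint supports for factors $\eabs \xi ^{-\rho}$ from the symbol estimates, and then deduce (1) and (2) immediately. The only difference is cosmetic: you run the argument on $\mathscr F_2^{-1}a$ with an explicit $\ep$-regularization to justify the formal manipulations, whereas the paper integrates by parts directly in the oscillatory integral defining $L_af$ via the amplitude $b_s(x,y,\xi )$.
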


\par

\begin{proof}
We note that $a_0$ exists as a tempered distribution in view of
Section \ref{sec1}. We need to prove that $a_0\in \mathscr S$, or
equivalently, that the kernel $K_a$ of $L_a$ belongs to $\mathscr S$.
By the definition it follows that
$$
(L_af)(x) = (2\pi )^{-d}\iint a(x,\xi )\fy _1(x)\fy _2(y)f(y)e^{i\scal
{x-y}\xi}\, dyd\xi .
$$
Since $\fy _1$ has
compact support it follows that for some $\ep >0$ it holds $\fy
_1(x)\fy _2(y)=0$ when $|x-y|\le 2\ep$. Hence, if $\fy \in
C^{\infty}(\rr d)$ satisfy $\fy (x)=0$ when $|x|\le \ep$ and $\fy
(x)=1$ when $|x|\ge 2\ep$, $f_2=\fy _2f$ and $a_1=\fy _1a$, then it
follows by partial integrations that
\begin{multline*}
(L_af)(x) = \op (a_1)f_2(x) = (2\pi )^{-d}\iint a_1(x,\xi
)f_2(y)e^{i\scal {x-y}\xi}\, dyd\xi
\\[1ex]
= (2\pi )^{-d}\iint (-1)^{s_2}(\Delta _\xi ^{s_2}a_1)(x,\xi
)f_2(y)|x-y|^{-2s_2}e^{i\scal {x-y}\xi}\, dyd\xi
\\[1ex]
= (2\pi )^{-d}\iint (-1)^{s_2}(\Delta _\xi ^{s_2}a_1)(x,\xi
)f_2(y)\fy (x-y) |x-y|^{-2s_2}e^{i\scal {x-y}\xi}\, dyd\xi
\\[1ex]
=(\op (b_s)f)(x),
\end{multline*}
where $s_2=s\ge 0$ is an integer,
\begin{align}
b_s(x,y,\xi ) &= (-1)^{s_2}(\Delta _\xi ^{s_2}a_1)(x,\xi )\fy
_1(x)\fy _2(y)\fy (x-y)|x-y|^{-2s_2}\label{bsformula}
\intertext{and}
\op (b_s)f(x) &= (2\pi )^{-d}\iint b_s(x,y,\xi )f(y)e^{i\scal
{x-y}\xi}\, dyd\xi .\notag
\end{align}
>From the fact that $|x-y|\ge C\eabs {x-y}$, when $(x,y,\xi )\in
\supp b_s$, and that $a\in S^{(\omega _0)}_{\rho ,\delta}(\rr {2d})$,
it follows from \eqref{bsformula} that
\begin{multline*}
|b_s(x,y,\xi )|\le C_s\omega _0(x,\xi )\eabs x^{-2s}\eabs y^{-2s}\eabs
{\xi}^{-2\rho s}
\\[1ex]
\le C_s'\eabs x^{N_0-2s}\eabs y^{-2s}\eabs
{\xi}^{N_0-2\rho s},
\end{multline*}
for some constant $N_0$ which is independent of $s$. In the same way it
follows that
\begin{equation}\label{partderbs}
|\partial ^\alpha b_s(x,y,\xi )|\le C_{s,\alpha }\eabs x^{N_0-2s}\eabs
y^{-2s}\eabs {\xi}^{N_0-2\rho s},
\end{equation}
for some constant $N_0$ which depends on $\alpha$, but is independent
of $s$.

\par

Now let $N\ge 0$ be arbitrary. Since the distribution kernel $K_a$ of
$L_a$ is equal to
$$
(2\pi )^{-d}\int b_s(x,y,\xi )e^{i\scal {x-y}\xi}\, d\xi ,
$$
it follows by choosing $s$ large enough in \eqref{partderbs} that for
each multi-index $\alpha$, there is a constant $C_{\alpha ,N}$ such
that
$$
|\partial ^\alpha K_a(x,y)| \le C_{\alpha ,N} \eabs {x,y}^{-N}.
$$
This proves that $K_a\in \mathscr S(\rr {2d})$, and (1)
follows.

\par

The assertion (2) is an immediate consequence of (1). The proof is
complete.
\end{proof}

\par

Next we consider properties of the wave-front set of $\op (a)f$ at a
fixed point when $f$ is concentrated to that point.

\par

\begin{prop}\label{keyprop2AA}
Let $\rho$, $\omega$, $\omega _0$, $a$, $\mathcal B$ and $\mathcal
C$ be as in Theorem \ref{mainthm2}. Also let $q\in [1,\infty ]$ and
$f\in \mathscr E'(\rr d)$. Then the following is true:
\begin{enumerate}
\item if $\Gamma _1,\Gamma _2$ are open cones in $\subseteq \rr
d\setminus 0$ such that $\overline{\Gamma _2}\subseteq \Gamma _1$, and
$|f|_{\mathcal B(\Gamma _1)}<\infty$, then $|\op (a)f|_{\mathcal
C(\Gamma _2)}<\infty$;

\vrum

\item $\WF  _{\mathcal C}(\op (a)f)\subseteq
\WF _{\mathcal B}(f)$.
\end{enumerate}
\end{prop}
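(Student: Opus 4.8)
The plan is to prove part (1) first, since part (2) follows from it by a standard argument. For part (1), the strategy is to analyze the symbol of $\op(a)$ composed with suitable cutoffs. Fix open cones $\Gamma_2 \subseteq \overline{\Gamma_2} \subseteq \Gamma_1$, and pick $\psi_1 \in C_0^\infty(\rr d)$ equal to one near $\supp f$, so that $\op(a)f = \op(a)(\psi_1 f)$ on the region of interest. The key decomposition is to write $\op(a)f = \op(\chi_1(D) a)f + \op((1-\chi_1(D))a)f$ where $\chi_1$ is (the characteristic function smoothed of) a conical cutoff; more precisely, I would split according to frequency directions: choose $\chi \in C^\infty(\rr d \setminus 0)$, homogeneous of degree zero, supported in $\Gamma_1$ and equal to one on $\Gamma_2$, and write $\widehat{\op(a)f}(\xi)$ using the integral formula \eqref{e0.5}. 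The point is to show that the contribution where the \emph{output} frequency lies in $\Gamma_2$ but the \emph{input} frequency (the $\eta$ variable after Fourier transforming $f$) lies outside $\Gamma_1$ produces a Schwartz contribution by a non-stationary phase / oscillatory integral argument exactly as in Proposition \ref{propmain1AA}, using that $|\xi - \eta| \gtrsim \max(|\xi|,|\eta|)$ there together with the symbol bounds \eqref{Somegadef} and repeated integration by parts in the phase variable.

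The main estimate reduces to the following: for $\xi \in \Gamma_2$,
\begin{equation*}
|\widehat{\op(a)f}(\xi)| \le C \int_{\Gamma_1} K(\xi,\eta) |\widehat f(\eta)|\, d\eta + (\text{rapidly decreasing in } \xi),
\end{equation*}
where $K(\xi,\eta)$ is the amplitude arising from the symbol $a$ and obeys $K(\xi,\eta) \le C\, \omega_0(x_0,\xi)\, \eabs{\xi-\eta}^{-M}$ for every $M$ (this uses $\rho > 0$ and that $a/\omega_0 \in S^0_{\rho,0}$, so each $\xi$-derivative gains a power $\eabs\xi^{-\rho}$, which after the change of variables controls $\eabs{\xi-\eta}$). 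Multiplying by the weight $\omega(x_0,\xi)/\omega_0(x_0,\xi)$ — which is exactly the weight defining $\mathcal C$ — and using that $\omega$ is $v$-moderate so $\omega(x_0,\xi) \le C\omega(x_0,\eta) v(\xi-\eta)$, one arrives at
\begin{equation*}
|\op(a)f|_{\mathcal C(\Gamma_2)} \le C\, \big\| \,\theta * (\chi_{\Gamma_1} F)\, \big\|_{L^q} + C'\,\sup_\eta\big(|\widehat f(\eta)|\omega(x_0,\eta)\eabs\eta^{-N}\big),
\end{equation*}
with $F(\eta) = |\widehat f(\eta)\omega(x_0,\eta)/\omega_0(x_0,\eta)|\cdot(\text{something})$ and $\theta \in L^1$ rapidly decaying, and then Young's inequality finishes it just as in the proof of Theorem \ref{wavefrontprop1}. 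The tail term is finite because $f \in \mathscr E'$ implies polynomial growth of $\widehat f$.

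For part (2): let $(x_0,\xi_0) \notin \WF_{\mathcal B}(f)$. Then there is $\fy \in C_0^\infty$ with $\fy(x_0) \ne 0$ and a conical neighborhood $\Gamma_1$ of $\xi_0$ with $|\fy f|_{\mathcal B(\Gamma_1)} < \infty$. Pick $\fy_1 \in C_0^\infty$ with $\fy_1(x_0) \ne 0$ and $\supp \fy_1$ small enough that $\fy = 1$ on $\supp\fy_1$ (shrinking if necessary), and write $\op(a)f = \op(a)(\fy f) + \op(a)((1-\fy)f)$; the commutator-type term $\fy_1 \op(a)((1-\fy)f)$ has Schwartz kernel by Proposition \ref{propmain1AA}, hence contributes nothing to $\WF_{\mathcal C}$. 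For the main term, apply part (1) with a slightly smaller cone $\Gamma_2 \ni \xi_0$, $\overline{\Gamma_2} \subseteq \Gamma_1$, and the compactly supported distribution $\fy f$, to get $|\op(a)(\fy f)|_{\mathcal C(\Gamma_2)} < \infty$; multiplying by any test function supported near $x_0$ preserves this by Theorem \ref{wavefrontprop1}. Hence $(x_0,\xi_0) \notin \WF_{\mathcal C}(\op(a)f)$.

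The hard part will be making the oscillatory-integral estimate on $K(\xi,\eta)$ uniform and clean: one must handle the $x$-dependence of $a(x,\xi)$ (localized by $\fy_1$, which is harmless since it has compact support) and carefully track how many integrations by parts are needed so that both the gain $\eabs{\xi-\eta}^{-M}$ and the loss $\omega_0(x_0,\xi)\eabs\xi^{N_0}$ are controlled — this is where $\rho > 0$ is essential and where the argument genuinely mirrors, but slightly extends, the computation in Proposition \ref{propmain1AA}. Everything else is bookkeeping with moderate weights and Young's inequality.
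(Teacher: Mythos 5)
Your overall architecture matches the paper's proof: pass to the Fourier side, write $\mathscr F(\op (a)f)(\xi )$ as an integral of the kernel $K(\xi ,\eta )=(\mathscr F_1a)(\xi -\eta ,\eta )$ against $\widehat f(\eta )$, split the $\eta$-integration into $\Gamma _1$ and its complement, handle the first piece by H{\"o}lder/Young using $|f|_{\mathcal B(\Gamma _1)}<\infty$, and the second using $|\xi -\eta |\gtrsim \max (|\xi |,|\eta |)$ together with the polynomial bound on $\widehat f$ coming from $f\in \mathscr E'$. Your derivation of (2) from (1) is also fine.

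However, the justification you give for the central kernel bound — the step you yourself flag as the hard part — is wrong. You claim $K(\xi ,\eta )\le C_M\,\omega _0\eabs {\xi -\eta}^{-M}$ follows because ``each $\xi$-derivative gains a power $\eabs \xi ^{-\rho}$, which after the change of variables controls $\eabs {\xi -\eta}$.'' That mechanism does not produce decay in $\xi -\eta$: inserting $\xi$-derivatives of the symbol by integration by parts in the oscillatory integral yields powers of $|x-y|^{-1}$ (this is precisely what Proposition \ref{propmain1AA} exploits for the disjoint-support term), not powers of $\eabs {\xi -\eta}^{-1}$. The correct source of the decay is the $x$-variable: after localizing, $a(\cdo ,\eta )$ is smooth and compactly supported in $x$ with $|\partial _x^\alpha a(x,\eta )|\le C_\alpha \omega _0(x,\eta )$ (here $\delta =0$ is what matters, not $\rho >0$), so integration by parts in $x$ in the partial Fourier transform gives $|(\mathscr F_1a)(\xi -\eta ,\eta )|\le C_N\eabs {\xi -\eta}^{-N}\omega _0(\eta )$ for every $N$; one then converts $\omega _0(\eta )$ and $\omega (\eta )$ into $\omega _0(\xi )$ and $\omega (\xi )$ by moderateness at the cost of finitely many powers of $\eabs {\xi -\eta}$, which the arbitrary $N$ absorbs. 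With this repair your argument coincides with the paper's. Note also that $\rho >0$ is not where this proposition bites: it enters through the pseudo-locality of Proposition \ref{propmain1AA} and through Lemma \ref{pseudolemma3}, not through the estimate above.
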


\par

We note that $\op (a)f$ in Proposition \ref{keyprop2AA} makes sense as
an element in $\mathscr S'(\rr d)$, by Proposition \ref{p5.4}.

\par

\begin{proof}
We may assume that $\omega (x,\xi )=\omega (\xi )$, $\omega
_0(x,\xi )=\omega _0(\xi )$, and that $\supp a\subseteq K\times \rr d$
for some compact set $K\subseteq \rr d$, since the
statements only involve local assertions. We only prove the result for
$q<\infty$. The slight modifications to the case $q=\infty$ are left
for the reader.

\par

By straight-forward computations we get
\begin{equation}\label{pseudoreform}
\mathscr F(\op (a)f)(\xi )  =(2\pi )^{-d/2} \int _{\rr {d}} (\mathscr
F_1a)(\xi -\eta,\eta )\widehat f(\eta )\, d\eta ,
\end{equation}
where $\mathscr F_1a$ denotes the partial Fourier transform of
$a(x,\xi )$ with respect to the $x$-variable. We need to estimate the
modulus of $\mathscr F_1a(\eta ,\xi )$.

\par

From the fact that $a\in S_{\rho ,0}^{(\omega _0)}(\rr {2d})$ is
smooth and compactly supported in the $x$ variable, it follows that
for each $N\ge 0$, there is a constant $C_N$ such that
\begin{equation}\label{F1aest}
|(\mathscr F_1a)(\xi ,\eta )|\le C_N\eabs \xi ^{-N}\omega _0(\eta ).
\end{equation}
Hence the facts that $\omega (\eta )\le \omega
(\xi )\eabs {\xi -\eta}^{N_0}$ and $\omega _0(\eta )\le \omega _0
(\xi )\eabs {\xi -\eta}^{N_0}$ for some $N_0$ give that for each
$N>d$ it holds
\begin{multline}\label{keyprop2AAest}
|(\mathscr F_1a)(\xi -\eta, \eta )\omega (\xi )/\omega _0(\xi )| \le
C_N\eabs {\xi -\eta} ^{-(N+2N_0)}\omega _0(\eta )\omega (\xi )/\omega
_0(\xi )
\\[1ex]
\le C_N'\eabs {\xi -\eta} ^{-N}\omega (\eta ),
\end{multline}
for some constants $C_N$ and $C_N'$.

\par

By letting $F(\xi )=|\widehat
f(\xi )\omega (\xi )|$, then \eqref{pseudoreform},
\eqref{keyprop2AAest} and H{\"o}lder's inequality give
\begin{multline}\label{estpseudo}
|\mathscr F(\op (a)f)(\xi )\omega _2(\xi )| \le C \int _{\rr {d}}
\eabs {\xi -\eta } ^{-N} F(\eta )\, d\eta ,
\\[1ex]
= C \int _{\rr {d}}\big ( \eabs {\xi -\eta} ^{-N/q} F(\eta
)\big ) \eabs {\xi -\eta} ^{-N/q'}\, d\eta
\\[1ex]
\le C'\Big ( \int _{\rr {d}} \eabs {\xi -\eta }^{-N} F(\eta
)^q\, d\eta \Big )^{1/q},
\end{multline}
where $C=(2\pi )^{-d/2}C_N''$ and
\begin{equation}\label{ineqineq2}
C'=C\nm {\eabs \cdot ^{-N}}{L^1}^{1/q'}.
\end{equation}
Here $C'<\infty$ in \eqref{ineqineq2}, since $N>d$.

\par

We have to estimate
$$
|(\op (a)f) |_{\mathcal C(\Gamma _2)} = \Big
(\int _{\Gamma _2}|\mathscr F(\op (a)f)(\xi )\omega (\xi )/\omega
_0(\xi )|^q\, d\xi \Big
)^{1/q}.
$$
By \eqref{estpseudo} we get
\begin{multline*}
\Big (\int _{\Gamma _2}|\mathscr F(\op (a)f)(\xi )\omega (\xi
)/\omega _0(\xi )|^q\, d\xi \Big )^{1/q}
\\[1ex]
 \le C \Big (\iint _{\xi \in \Gamma _2}\eabs
{\xi -\eta }^{-N} F(\eta )^q\, d\eta  d\xi \Big )^{1/q}
\le C(J_1+J_2),
\end{multline*}
for some constant $C$, where
\begin{align*}
J_1 &= \Big (\int _{\Gamma _2} \int _{\Gamma _1}\eabs {\xi
-\eta} ^{-N} F(\eta )^q\, d\eta  d\xi \Big )^{1/q}
\intertext{and}
J_2 &= \Big (\int _{\Gamma _2}  \int _{\complement \Gamma
_1}\eabs {\xi -\eta} ^{-N} F(\eta )^q\, d\eta  d\xi  \Big
)^{1/q}.
\end{align*}

\par

In order to estimate $J_1$ and $J_2$ we argue as in the proof of
\eqref{cuttoff1}. More precisely, for $J_1$ we have
\begin{align*}
J_1&\le \Big (\int _{\rr d} \int _{\Gamma _1}\eabs {\xi -\eta}
^{-N}F(\eta )^q\, d\eta  d\xi \Big )^{1/q}
\\[1ex]
&=  \Big (\int _{\rr d} \int _{\Gamma _1}\eabs {\xi} ^{-N} F(\eta
)^q\, d\eta  d\xi \Big )^{1/q}
= C\Big (\int _{\Gamma _1}F(\eta )^q\, d\eta \Big ) ^{1/q}<\infty .
\end{align*}

\par

In order to estimate $J_2$, we assume from now on that $\Gamma _2$
is chosen such that $\Gamma _2\subseteq \Gamma _1$, and that the
distance between the boundaries of $\Gamma _1$ and $\Gamma _2$ on
the $d-1$ dimensional unit sphere $\mathsf S^{d-1}$ is larger than
$r>0$. This gives
\begin{equation}\label{gammarel}
|\xi -\eta |> r\quad \text{when}\quad \xi \in \Gamma _2\ttbigcap
\mathsf S^{d-1},\ \text{and}\ \eta \in (\complement \Gamma
_1)\ttbigcap \mathsf S^{d-1}.
\end{equation}
Then for some constant $c>0$ we have
$$
|\xi -\eta |\ge c\max ( |\xi |,|\eta |),\quad \text{when}\quad \xi \in
\Gamma _2,\ \text{and}\ \eta \in \complement \Gamma _1.
$$
In fact, when proving this we may assume that
$|\eta |\le |\xi |=1$. Then we must have that $|\xi -\eta |\ge c$ for
some constant $c>0$, since we otherwise get a contradiction of
\eqref{gammarel}.

\par

Since $f$ has compact support, it follows that $F(\eta )\le C\eabs
\eta ^{t_1}$ for some constant $C$. By combining these estimates we
obtain
\begin{multline*}
J_2\le \Big (\int _{\Gamma _2}\int _{\complement \Gamma _1}F(\eta
)\eabs {\xi -\eta} ^{-N}\, d\eta  d\xi \Big )^{1/q}
\\[1ex]
\le C\Big ( \int _{\Gamma _2}\int _{\complement \Gamma _1}\eabs \eta
^{t_1}\eabs {\xi} ^{-N/2}\eabs \eta ^{-N /2}\, d\eta
d\xi \Big ).
\end{multline*}
Hence, if we choose $N>2d+2t_1$, it follows that the right-hand side
is finite. This proves (1).

\par

The assertion (2) follows immediately from (1) and the
definitions. The proof is complete.
\end{proof}

\par

We also need the following lemma. Here we recall Definition
\ref{defchar} for notations.

\par

\begin{lemma}\label{pseudolemma3}
Let $0\le \delta < \rho \le 1$, $\mu =\rho -\delta$,
$\omega _0\in \mathscr P_{\rho ,\delta}(\rr {2d})$, and
$a\in S^{(\omega _0)}_{\rho ,\delta} (\rr {2d})$. If  $(x_0,\xi
_0)\notin \Char _{(\omega _0)}(a)$, then for some open cone
$\Gamma =\Gamma _{\xi _0}$, open neighborhood $X\subseteq \rr d$ of
$x_0$ and $R>0$,
there are elements $c_j\in S^0_{\rho ,\delta}$ which are $(X,\Gamma
,R)$-unitary, $b_j\in S^{(1/\omega _0)}_{\rho ,\delta}$ and $h_j\in
S^{-j\mu}_{\rho ,\delta}$ for $ j\in \mathbf N$ such that
$$
\op (b_j)\op (a)=\op (c_j)+\op (h_j),\qquad j\ge 1.
$$
\end{lemma}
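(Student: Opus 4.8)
The statement is an iterated parametrix construction in the Hörmander calculus on the metric $g_{\rho,\delta}$. The plan is to build the $b_j$, $c_j$, $h_j$ inductively on $j$. For $j=1$ this is almost the definition of non-characteristic: since $(x_0,\xi_0)\notin\Char_{(\omega_0)}(a)$, Definition \ref{defchar} gives $b\in S^{(1/\omega_0)}_{\rho,\delta}$, a $(X_0,\Gamma_0,R_0)$-unitary $c\in S^0_{\rho,\delta}$ and $h\in S^{-\mu}_{\rho,\delta}$ with $ba=c+h$ pointwise. Passing to operators via the composition calculus (Theorem 18.5.4 or the symbolic calculus in Sections 18.4--18.6 of \cite{Ho1}), one gets $\op(b)\op(a)=\op(c) + \op(r_1)$ with $r_1\in S^{-\mu}_{\rho,\delta}$ coming from $h$ together with the lower-order terms in the composition $b\# a - ba$; each such term gains a factor $\eabs\xi^{-\mu}$, so $r_1\in S^{-\mu}_{\rho,\delta}$. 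Setting $b_1=b$, $c_1=c$, $h_1=r_1$ handles the base case.

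\textbf{Inductive step.} Suppose $\op(b_j)\op(a)=\op(c_j)+\op(h_j)$ with $h_j\in S^{-j\mu}_{\rho,\delta}$ and $c_j$ being $(X,\Gamma,R)$-unitary (shrinking $X,\Gamma$ and enlarging $R$ only finitely often — in fact I expect one can keep them fixed after the first step). The idea is to correct $b_j$ by a term that kills $\op(h_j)$ modulo $S^{-(j+1)\mu}$. Choose a cutoff $\chi\in S^0_{\rho,\delta}$ which equals $1$ on a slightly smaller cone-neighbourhood $(X',\Gamma',R')$ of $(x_0,\xi_0)$ and is supported where $c_j=1$; then $\op(\chi)\op(b_j)\op(a) = \op(\chi) + \op(\chi)\op(h_j) + \op(e)$ where $e\in S^{-\infty}$ near $(x_0,\xi_0)$ (the commutator/cutoff error). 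Now set
\begin{equation*}
b_{j+1} = \chi\, b_j - (\chi\, h_j)\# b_1,
\end{equation*}
where $\#$ is the symbol product. Since $\chi h_j\in S^{-j\mu}_{\rho,\delta}$ and $b_1\in S^{(1/\omega_0)}_{\rho,\delta}$, we have $b_{j+1}\in S^{(1/\omega_0)}_{\rho,\delta}$ (the weight $\eabs\xi^{-j\mu}\le C$). Then
\begin{equation*}
\op(b_{j+1})\op(a) = \op(\chi)\op(b_j)\op(a) - \op(\chi h_j)\op(b_1)\op(a) + (\text{err}).
\end{equation*}
Using $\op(b_1)\op(a)=\op(c_1)+\op(h_1)$ and $\op(\chi)\op(b_j)\op(a)=\op(\chi)+\op(\chi h_j)+\op(e)$, the terms $\op(\chi h_j)$ and $-\op(\chi h_j)\op(c_1)$ cancel modulo $S^{-(j+1)\mu}$ near $(x_0,\xi_0)$, because $c_1=1$ there so $\op(\chi h_j)\op(c_1)=\op(\chi h_j)+\op(\text{lower})$ with the lower term in $S^{-(j+1)\mu}$ (one gains $\eabs\xi^{-\mu}$ from the composition), and the remaining term $-\op(\chi h_j)\op(h_1)$ has symbol in $S^{-j\mu}\cdot S^{-\mu}=S^{-(j+1)\mu}$. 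Collecting everything, $\op(b_{j+1})\op(a)=\op(\chi)+\op(h_{j+1})$ with $h_{j+1}\in S^{-(j+1)\mu}_{\rho,\delta}$; and $c_{j+1}:=\chi$ is $(X',\Gamma',R')$-unitary. One then absorbs the $S^{-\infty}$ errors into $h_{j+1}$.

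\textbf{Main obstacle.} The bookkeeping of \emph{where} things are unitary: the compositions and cutoffs force one to shrink the neighbourhood $X\times\Gamma$ and raise $R$, and one must verify this happens in a controlled way so that a single triple $(X,\Gamma,R)$ works for all $j$ simultaneously (which the statement requires — the $c_j$ are all $(X,\Gamma,R)$-unitary with the \emph{same} $X,\Gamma,R$). The clean way around this is to fix, once and for all, open cones and neighbourhoods $(X_\infty,\Gamma_\infty,R_\infty)$ compactly contained in the original $(X_0,\Gamma_0,R_0)$, choose the cutoff $\chi$ to be $1$ on $(X_\infty,\Gamma_\infty,R_\infty)$ and supported in $(X_0,\Gamma_0,R_0)$, and note that since all errors introduced by $\chi$ are smoothing \emph{off} $\supp(1-\chi)$ hence negligible on $(X_\infty,\Gamma_\infty,R_\infty)$, every $c_j$ can be taken equal to the \emph{same} $\chi$ after the first step. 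Once the cone geometry is pinned down this way, the order estimates ($h_j\in S^{-j\mu}$) are the routine part — each application of the composition formula in the $g_{\rho,\delta}$-calculus gains exactly $\eabs\xi^{-\mu}=\eabs\xi^{-(\rho-\delta)}$, which is $\to 0$ precisely because $\mu>0$. I would therefore phrase the proof as: reduce to $j=1$ via Definition \ref{defchar}; fix the cone geometry; then run the induction above, citing Sections 18.4--18.6 of \cite{Ho1} for the symbolic calculus and the gain of $\eabs\xi^{-\mu}$ at each composition.
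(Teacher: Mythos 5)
Your overall strategy --- get $j=1$ from Definition \ref{defchar} together with the composition calculus, then iterate, gaining a factor $\eabs \xi ^{-\mu}$ at each composition --- is correct, but your iteration is genuinely different from the paper's. You run a Neumann-type correction (subtract the first-order parametrix applied to the current error $h_j$) and introduce a fixed auxiliary cutoff $\chi$ to control where the new unitary symbol lives. The paper instead sets
$$
\op (b_j)=\big (\op (c_{j-1})-\op (h_{j-1})\big )\op (b_{j-1}),
$$
so that
$$
\op (b_j)\op (a)=\op (c_{j-1})^2+[\op (c_{j-1}),\op (h_{j-1})]-\op (h_{j-1})^2 .
$$
The square of $\op (h_{j-1})$ has symbol in $S^{-2(j-1)\mu}_{\rho ,\delta}\subseteq S^{-j\mu}_{\rho ,\delta}$, the commutator gains one factor $\eabs \xi ^{-\mu}$ and lands in $S^{-j\mu}_{\rho ,\delta}$, and the new unitary element $c_j$ is the truncated expansion of $c_{j-1}\, \text{\footnotesize $\#$}\, c_{j-1}$, which is automatically $(X,\Gamma ,R)$-unitary \emph{on the same triple} because every term with $|\alpha |\ge 1$ contains a derivative of $c_{j-1}$ and hence vanishes where $c_{j-1}\equiv 1$. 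This disposes of the ``main obstacle'' you identify (one fixed $(X,\Gamma ,R)$ for all $j$) without any auxiliary cutoff; your resolution via a fixed $\chi$ is also legitimate, just less economical.

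One step of your write-up, taken literally, fails. You define $b_{j+1}=\chi \, b_j-(\chi h_j)\, \text{\footnotesize $\#$}\, b_1$ with the \emph{pointwise} product $\chi b_j$ in the first term, but then compute with $\op (\chi )\op (b_j)\op (a)$. The discrepancy $\op (\chi b_j)-\op (\chi )\op (b_j)$ has symbol whose leading part is $-\partial _\xi \chi \cdot D_xb_j\in S^{(1/\omega _0)}_{\rho ,\delta}\cdot \eabs \xi ^{-\mu}$, supported inside $\supp \chi$ (in the transition region of the cone), and composing it with $\op (a)$ produces an error of \emph{fixed} order $-\mu$. This is exactly the uncontrolled ``$(\text{err})$'' in your display; it does not lie in $S^{-(j+1)\mu}_{\rho ,\delta}$ for $j\ge 1$ and so breaks the induction as stated. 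The repair is immediate: define $b_{j+1}=\chi \, \text{\footnotesize $\#$}\, b_j-(\chi h_j)\, \text{\footnotesize $\#$}\, b_1$, i.e. $\op (b_{j+1})=\op (\chi )\op (b_j)-\op (\chi h_j)\op (b_1)$, after which your computation (and the bookkeeping of where $c_1$ and $\chi$ equal $1$) goes through.
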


\par

\begin{proof}
For $j=1$, the result is obvious in view of Definition
\ref{defchar}. Therefore assume that $j>1$, and that $b_k$, $c_k$
and $h_k$ for $k=1,\dots ,j-1$ have already been chosen which
satisfy the required properties. Then we inductively define $b_j$ by
the formula
$$
\op (b_j) = (\op (c_{j-1})-\op (h_{j-1}))\op (b_{j-1}).
$$
By the inductive hypothesis it follows that
$$
\op (b_j)\op (a) =  \op (\widetilde c) +  \op (\widetilde h_1) +
\op (\widetilde h_2),
$$
where
\begin{align*}
\op (\widetilde c) &= \op (c_{j-1})\op (c_{j-1}),
\\[1ex]
\op (\widetilde h_1) &=[\op (c_{j-1}),\op (h_{j-1})]
\intertext{and}
\op (\widetilde h_2) &= \op (h_{j-1})\op (h_{j-1}).
\end{align*}
Here $[\cdo ,\cdo ]$ denotes the commutator between operators.

\par

By Theorems 18.5.4 and 18.5.10 in \cite {Ho1} it follows that
$\widetilde h_2\in
S^{-2(j-1)\mu}_{\rho ,\delta}\subseteq S^{-j\mu}_{\rho ,\delta}$,
since the conditions $j-1\ge 1$ and $0<\mu\le 1$ imply that
$-2(j-1)\mu\le -j\mu$. Hence
\begin{equation}\label{tildeh2}
\widetilde h_l\in S^{-j\mu}_{\rho ,\delta}
\end{equation}
holds for $l=2$.

\par

Next we consider the term $\op (\widetilde h_1)$. By Theorem 18.1.18
\cite{Ho1} it follows that
\begin{multline*}
\widetilde h_1 (x,\xi ) =i\sum _{|\alpha |=1} (\partial ^\alpha
_xc_{j-1}(x,\xi )\partial ^\alpha _\xi h_{j-1}(x,\xi ) - \partial
^\alpha _\xi c_{j-1}(x,\xi )\partial ^\alpha _x h_{j-1}(x,\xi ))
\\[1ex]
+\widetilde h_3(x,\xi ),
\end{multline*}
for some $\widetilde h_3\in S^{-(j+1)\mu}_{\rho ,\delta}$. Since the
sum belongs to $S^{-j\mu}_{\rho ,\delta}$ in view of the definitions,
it follows that \eqref{tildeh2} also holds for $l=1$.

\par

It remains to consider the term $\widetilde c$. By Theorems 18.5.4 and
18.5.10 in \cite {Ho1} again, it follows that
$$
\widetilde c(x,\xi ) = c_j(x,\xi ) +\widetilde h_4(x,\xi ),
$$
where
$$
c_{j}(x,\xi ) = \sum _{|\alpha |\le N} \frac{
(-i)^{|\alpha|}}{\alpha!}
\partial ^\alpha _xc_{j-1}(x,\xi )\partial ^\alpha _\xi
c_{j-1}(x,\xi ),
$$
and $\widetilde h_4\in S^{-j\mu}_{\rho ,\delta}$, provided $N$ was
chosen sufficiently large. Since $c_{j-1}=1$ on
$$
\sets {(x,\xi )}{x\in X,\ \xi \in \Gamma ,\ |\xi |>R},
$$
it follows that $c_j$ is $(X,\Gamma ,R)$-unitary. The
result now follows by letting $h_j=\widetilde h_1+\widetilde
h_2+\widetilde h_4$. The proof is complete.
\end{proof}

\par

\begin{proof}[Proof of Theorem \ref{mainthm2}]
We start to prove the first inclusion in \eqref{wavefrontemb1}.
Assume that $(x_0,\xi _0)\notin \WF _{\mathcal B}(f)$, let $\fy \in
C_0^\infty (\rr d)$ be such that $\fy =1$
in a neighborhood of $x_0$, and set $\fy _1=1-\fy$. Then it
follows from Proposition \ref{propmain1AA} that
$$
(x_0,\xi _0)\notin \WF _{\mathcal C}(\op
(a)(\fy _1f)).
$$
Furthermore, by Proposition \ref{keyprop2AA} we get
$$
(x_0,\xi _0)\notin \WF _{\mathcal C}(\op (a)(\fy f)),
$$
since if $a_0(x,\xi )=\fy (x)a(x,\xi )$, then $\op (a)(\fy f)$
is equal to $\op (a_0)(\fy f)$ near $x_0$. The first embedding in
\eqref{wavefrontemb1} is now a consequence of the inclusion
\begin{equation*}
\WF  _{\mathcal C}(\op (a)f)
\subseteq \WF _{\mathcal C}(\op (a)(\fy f))\ttbigcup \WF  _{\mathcal
C}(\op (a)(\fy _1f)).
\end{equation*}

\par

It remains to prove the last inclusion in \eqref{wavefrontemb1}. By
Proposition \ref{propmain1AA} it follows that it is no restriction to
assume that $f$ has compact support. Assume that
$$
(x_0,\xi _0)\notin  \WF _{\mathcal C}(\op
(a)f)\ttbigcup \Char _{(\omega _0)}(a),
$$
and choose $b_j$, $c_j$ and $h_j$ as in Lemma \ref{pseudolemma3}. We
shall prove that $(x_0,\xi _0)\notin  \WF  _{\mathcal B}(f)$. Since
$$
f = \op (1-c_j)f + \op (b_j)\op (a)f+\op (h_j)f,
$$
the result follows if we prove
$$
(x_0,\xi _0)\notin \mathsf S_1\ttbigcup \mathsf S_2\ttbigcup \mathsf
S_3,
$$
where
\begin{align*}
\mathsf S_1 &= \WF _{\mathcal B}(\op (1-c_j)f),\quad
\mathsf S_2 = \WF _{\mathcal B}(\op (b_j)\op (a)f)
\\[1ex]
\text{and}\quad \mathsf S_3 &= \WF _{\mathcal B}(\op
(h_j)f).
\end{align*}

\par

We start to consider $\mathsf S_2$. By the first embedding in
\eqref{wavefrontemb1} it follows that
$$
\mathsf S_2 = \WF _{\mathcal B}(\op (b_j)\op (a)f)
\subseteq \WF  _{\mathcal C}(\op (a)f).
$$
Since we have assumed that $(x_0,\xi _0)\notin \WF _{\mathcal C}(\op
(a)f)$, it follows that $(x_0,\xi _0)\notin \mathsf S_2$.

\par

Next we consider $\mathsf S_3$. Since $f$ has compact support and
$\omega ,\omega _0\in \mathscr P(\rr {2d})$, it follows from Lemma
\ref{pseudolemma3} that for
each $N\ge 0$, there is a $j\ge 1$ such that $\fy
(x)D^{\alpha}(\op (h_j)f)\in L^\infty $ when $|\alpha |\le N$ and
$\fy \in C_0^\infty (\rr d)$ for some $ h_j \in S ^{-j\rho}_{\rho,0}
$. This implies that $\mathsf S_3$ is empty, provided $N$ (and
therefore $j$) was chosen large enough.

\par

Finally we consider $\mathsf S_1$. By the assumptions it follows
that $a_0=1-c_j=0$ in $\Gamma$, and by replacing $ \Gamma $ with a
smaller cone, if necessary, we may assume that $ a_0 = 0 $ in a
conical neighborhood of $ \Gamma$. Hence, if $\Gamma
=\Gamma _1$, and $\Gamma _1$, $\Gamma _2$, $J_1$ and $J_2$ are the
same as in the
proof of Proposition \ref{keyprop2AA}, then it follows from that proof
and the fact that $a_0(x,\xi )\in S^0_{\rho ,0}$ is compactly
supported in the $x$-variable, that $J_1<+\infty$ and for each $N\ge
0$, there are constants $C_N$ and $C_N'$ such that
\begin{multline}\label{estagain4}
| \op (a_0)f |_{\mathcal B(\Gamma _2)} \le
C_N (J_1+J_2)
\\[1ex]
\le C_N' \Big (J_1+ \Big ( \int _{\Gamma _2}\int _{\complement
\Gamma _1}\eabs {\xi} ^{-N/2}\eabs \eta ^{(N_0-N/2)}\, d\eta d\xi
\Big )^{1/q}\Big ).
\end{multline}
for some $N_0\ge 0$. By choosing $N>2N_0+2d$, it follows that $|
\op (a_0)f |_{\mathcal C(\Gamma _2)}
< \infty$. This proves that $(x_0,\xi _0)\notin \mathsf S_1$, and
the proof is complete.
\end{proof}

\par

\begin{rem}
By Theorem 18.5.10 in \cite{Ho1} it follows that the first embedding
in \eqref{wavefrontemb1} remains valid if $\op (a)$ is replaced by
$\op _t(a)$.
\end{rem}

\par

\begin{rem}\label{rho0}
We note that the inclusions in Theorem \ref{mainthm2}
may be violated when $\omega _0=1$ and the assumption
$\rho >0$ is replaced by $\rho =0$. In fact, let $a(x,\xi )=e^{-i\scal
{x_0}\xi}$ for some fixed $x_0 \in \rr d$, and choose $\alpha$ in such
way that $f_\alpha (x) =\delta _0^{(\alpha )}$ does not belong to
$\mathcal B=\mathscr FL^q_{(\omega )}(\rr {d})$. Since
$$
(\op (a)f_\alpha )(x) = f_\alpha (x-x_0),
$$
by straight-forward computations, it follows that for some closed cone
$\Gamma$ in $\rr d\setminus 0$ we have
\begin{align*}
\WF _{\mathcal B} (f)  &= \sets {(0,\xi )}{\xi \in
\Gamma}
\\[1ex]
\WF _{\mathcal B} (\op (a)f)  &= \sets {(x_0,\xi )}{\xi
\in \Gamma},
\end{align*}
which are not overlapping when $x_0\neq 0$.
\end{rem}

\medspace

Next we apply Theorem \ref{mainthm2} on
operators which are elliptic with respect to $S^{(\omega _0)}_{\rho
,\delta}(\rr {2d})$, where $\omega _0\in \mathscr P(\rr {2d})$. More
precisely, assume that $0\le \delta <\rho \le1$ and $a\in S^{(\omega
_0)}_{\rho ,\delta}(\rr {2d})$. Then $a$ and
$\op (a)$ are called (locally) \emph{elliptic} with respect to
$S^{(\omega _0)}_{\rho ,\delta}(\rr {2d})$ or $\omega _0$,
if for each compact set $K\subseteq \rr d$, there are positive
constants $c$ and $R$ such that
$$
|a(x,\xi )| \ge c\omega _0(x,\xi ),\quad x\in K,\ |\xi |\ge R
$$
Since $|a(x,\xi )|\le C\omega _0(x,\xi )$, it follows from the
definitions that for each multi-index $\alpha$, there are constants
$C_\alpha$ such that
\begin{equation*}
|\partial ^\alpha _x\partial ^\beta _\xi a(x,\xi )| \le C_{\alpha
,\beta}|a(x,\xi )|\eabs \xi ^{-\rho |\beta |+\delta |\alpha |},\quad
x\in K,\   |\xi |>R.
\end{equation*}
(See e.{\,}g. \cite {Ho1,BBR}.) It immediately follows from the
definitions that $\Char _{(\omega _0)}(a)=\emptyset$ when $a$ is
elliptic with respect to $\omega _0$. The following result is now an
immediate consequence of Theorem \ref{mainthm2}.

\par

\begin{thm}\label{hypoellthm}
Let $\omega \in \mathscr P(\rr
{2d})$, $\omega _0\in \mathscr P_{\rho ,0}(\rr {2d})$, $q\in
[1,\infty ]$, $\rho >0$, and let $a\in S^{(\omega _0)} _{\rho ,0} (\rr
{2d})$ be elliptic with respect to $\omega _0$. Also let $\mathcal B$
and $\mathcal C$ be as in Theorem \ref{mainthm2}. If
$f\in \mathscr S'(\rr d)$, then
$$
\WF _{\mathcal C} (\op (a)f)= \WF _{\mathcal B} (f).
$$
\end{thm}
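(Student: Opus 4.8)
The plan is to derive Theorem \ref{hypoellthm} directly from Theorem \ref{mainthm2} together with the observation, made just before the statement, that an operator which is (locally) elliptic with respect to $\omega _0$ has empty characteristic set, i.e. $\Char _{(\omega _0)}(a)=\emptyset$. Once this is in hand, the double inclusion \eqref{wavefrontemb1} collapses to an equality, and essentially no further work is required. Thus the proof is a two-line deduction; the only thing worth spelling out is why $\Char _{(\omega _0)}(a)=\emptyset$, which is the single nontrivial point.

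First I would recall the ellipticity hypothesis: for every compact $K\subseteq \rr d$ there are $c,R>0$ with $|a(x,\xi )|\ge c\,\omega _0(x,\xi )$ for $x\in K$, $|\xi |\ge R$, while the symbol estimate \eqref{Somegadef} gives $|a(x,\xi )|\le C\omega _0(x,\xi )$ everywhere. Fix a point $(x_0,\xi _0)\in \rr d\times(\rr d\setminus 0)$, choose a bounded open neighbourhood $X$ of $x_0$, an open conical neighbourhood $\Gamma$ of $\xi _0$, and $R>0$ as above with $K=\overline X$. Following the strategy already used in Remark \ref{compchar}, I would pick a cut-off $\fy \in S^0_{\rho ,0}(\rr {2d})$ supported in $X\times\Gamma$ (intersected with $|\xi |\ge R$) and equal to one on a smaller conical neighbourhood of $(x_0,\xi _0)$, and set $b=\fy /a$. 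The lower bound on $|a|$ together with the elliptic symbol estimates $|\partial _x^\alpha\partial _\xi ^\beta a|\le C_{\alpha ,\beta}|a|\,\eabs \xi ^{-\rho |\beta |+\delta |\alpha |}$ (valid on $K$, $|\xi |\ge R$, and quoted in the excerpt) show, by the Leibniz rule and induction on $|\alpha |+|\beta |$, that $b\in S^{(1/\omega _0)}_{\rho ,0}(\rr {2d})$. Then $b\,a=\fy =c$ on the whole of $\rr {2d}$ with $c=\fy$ being $(X',\Gamma ',R)$-unitary for suitable smaller $X',\Gamma '$, so taking $h=0\in S^{-\rho}_{\rho ,0}$ we see $(x_0,\xi _0)$ is non-characteristic. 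Since $(x_0,\xi _0)$ was arbitrary, $\Char _{(\omega _0)}(a)=\emptyset$.

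Finally I would invoke Theorem \ref{mainthm2} with this $a$: the chain \eqref{wavefrontemb1} reads $\WF _{\mathcal C}(\op (a)f)\subseteq \WF _{\mathcal B}(f)\subseteq \WF _{\mathcal C}(\op (a)f)\cup\Char _{(\omega _0)}(a)=\WF _{\mathcal C}(\op (a)f)$, and the desired equality $\WF _{\mathcal C}(\op (a)f)=\WF _{\mathcal B}(f)$ follows at once. I expect no genuine obstacle here; the mild technical point is the verification that $b=\fy /a$ lies in $S^{(1/\omega _0)}_{\rho ,0}$, which is the usual ``reciprocal of an elliptic symbol is a symbol of the reciprocal order'' computation and is exactly the kind of routine estimate the excerpt already carries out in Remark \ref{compchar}. (Alternatively, one could simply cite the standard parametrix construction for elliptic operators in $S^{(\omega _0)}_{\rho ,\delta}$, e.g. via the Hörmander calculus referenced after the Proposition in Subsection \ref{subsec1}.) Everything else is bookkeeping, and the proof is complete once these two ingredients are assembled.
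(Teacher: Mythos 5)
Your proposal is correct and follows essentially the same route as the paper: the authors note immediately before the theorem that ellipticity with respect to $\omega _0$ forces $\Char _{(\omega _0)}(a)=\emptyset$ (which you verify in detail via $b=\fy /a$, exactly the construction suggested by Remark \ref{compchar}), and then state the theorem as an immediate consequence of Theorem \ref{mainthm2}. Your write-up merely makes explicit the routine symbol estimate for $\fy /a$ that the paper leaves to the reader.
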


\par

\begin{cor}\label{cormainthm23B}
Assume that the hypothesis in Theorem \ref{hypoellthm} is
fulfilled with $\omega =\omega _0$, and let
$E$ be a parametrix for $\op (a)$. Then $\fy E\in \mathscr
FL^\infty _{(\omega )}$ for every $\fy \in C_0^\infty (\rr d)$,
i.{\,}e. for each $x_0\in $ and $\fy \in C_0^\infty (\rr d)$, there
is a constant $C$ such that \eqref{huvaligen} holds.
\end{cor}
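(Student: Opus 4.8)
The plan is to deduce Corollary~\ref{cormainthm23B} directly from Theorem~\ref{hypoellthm} together with the local equivalence of the wave-front set $\WF_{\mathscr FL^\infty_{(\omega)}}$ with membership in $\mathscr FL^\infty_{(\omega)}$ after multiplication by a test function. First I would recall that a parametrix $E$ for $\op(a)$ is, by definition, a continuous operator (or its kernel) such that $\op(a)E = \delta_0 + \fy_0$ and $E\op(a) = \delta_0 + \fy_1$ for some $\fy_0,\fy_1\in C^\infty$; equivalently, writing $g=Ef$ for a suitable $f$, one has $\op(a)(\fy E) \in C^\infty$ locally. The key point is that $\delta_0 + \fy_0$ is smooth away from the origin, and in particular, for any $\fy\in C_0^\infty(\rr d)$, the distribution $\op(a)(\fy E)$ differs from a smooth compactly supported function by a term whose wave-front set (of any Fourier-Lebesgue type) is empty; hence $\WF_{\mathcal C}(\op(a)(\fy E)) = \emptyset$, where $\mathcal C = \mathscr FL^\infty_{(\omega/\omega_0)} = \mathscr FL^\infty$ since we are taking $\omega=\omega_0$.

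Next I would invoke Theorem~\ref{hypoellthm} with this choice $\omega=\omega_0$ and $q=\infty$: it gives $\WF_{\mathcal C}(\op(a)g) = \WF_{\mathcal B}(g)$ for all $g\in\mathscr S'$, where $\mathcal B = \mathscr FL^\infty_{(\omega)}$. Applying this with $g = \fy E$ (after checking $\fy E\in\mathscr S'$, which holds since $E$ is a parametrix and hence maps nicely on the relevant spaces), and combining with the previous step, I conclude $\WF_{\mathscr FL^\infty_{(\omega)}}(\fy E) = \emptyset$. By the very definition of the wave-front set (Definition~\ref{wave-frontdef}) together with the remark that locally $\mathscr FL^q_{(\omega)}$-membership is detected by the emptiness of the wave-front set, emptiness of $\WF_{\mathscr FL^\infty_{(\omega)}}(\fy E)$ for all $\fy\in C_0^\infty$ is equivalent to $\psi E\in\mathscr FL^\infty_{(\omega)}$ for all $\psi\in C_0^\infty$. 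Spelling out the $\mathscr FL^\infty_{(\omega)}$-norm finiteness, $\nm{\widehat{\psi E}\cdot\omega(x,\cdot)}{L^\infty}<\infty$, yields exactly the pointwise bound $|\mathscr F(\fy E)(\xi)|\le C\omega(x,\xi)^{-1}$ of \eqref{huvaligen}.

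The one technical step needing care is the passage from ``$\WF_{\mathcal B}(f)=\emptyset$ for $f\in\mathscr D'(X)$'' to ``$\fy f\in\mathcal B$''. This is where I would spend most of the effort: one covers $\supp\fy$ by finitely many open sets, on each of which, by compactness of $\mathsf S^{d-1}$, one can choose finitely many cones $\Gamma_1,\dots,\Gamma_M$ covering $\rr d\setminus 0$ such that $|\fy f|_{\mathcal B(\Gamma_j)}<\infty$ for each $j$ (using that every direction $\xi$ lies in $\Theta_{\mathcal B}(\psi f)$ for an appropriate $\psi$); summing these semi-norms and using a partition of unity controls the full $\mathcal B$-norm. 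This is essentially the standard argument that a distribution with empty wave-front set with respect to $\mathcal B$ is locally in $\mathcal B$, and is implicit in Definition~\ref{wave-frontdef} and the discussion of $\Theta_{\mathcal B}$, $\Sigma_{\mathcal B}$; I would cite it as such rather than re-deriving it. The remaining ingredients---smoothness of $\delta_0+\fy_0$ off the origin, hence emptiness of its Fourier-Lebesgue wave-front set---are immediate from the fact that $\mathscr FL^\infty$ contains $C_0^\infty$ and from the already-established micro-locality in Theorem~\ref{wavefrontprop1}.
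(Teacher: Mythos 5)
Your overall strategy is the paper's: apply Theorem \ref{hypoellthm} with $q=\infty$ and $\omega =\omega _0$ to the identity $\op (a)E=\delta _0+\fy _0$, conclude that $\WF _{\mathscr FL^\infty _{(\omega )}}(E)$ is empty, and then convert emptiness of the wave-front set into \eqref{huvaligen}. Your closing covering argument (a compactly supported distribution with empty $\mathcal B$-wave-front set belongs to $\mathcal B$) is the right way to make the last step precise, and the paper indeed leaves it implicit. However, two steps in the first half do not go through as written. You localize inside the operator and assert $\WF _{\mathcal C}(\op (a)(\fy E))=\emptyset$ on the grounds that $\op (a)E=\delta _0+\fy _0$; but $\op (a)(\fy E)$ and $\fy \cdot \op (a)E$ differ by the commutator $[\op (a),\fy ]E$, and the only control available on its wave-front set (via the first inclusion of Theorem \ref{mainthm2}) is in terms of $\WF _{\mathcal B}(E)$ --- precisely what you are trying to prove is empty, so the argument is circular as stated. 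The fix is not to localize at this stage at all: Theorem \ref{hypoellthm} is stated for arbitrary $f\in \mathscr S'(\rr d)$, so apply it directly to $E$ itself and only cut off by $\fy$ at the very end, when passing from the empty wave-front set to the estimate \eqref{huvaligen}.

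The second problem is your justification that $\delta _0+\fy _0$ has empty wave-front set: ``smooth away from the origin'' and ``a term whose wave-front set (of any Fourier-Lebesgue type) is empty.'' Smoothness away from the origin says nothing about the fibre over $x_0=0$, and the parenthetical claim is false in general: $\delta _0$ does not belong locally to $\mathscr FL^q_{(\vartheta )}$ when $q<\infty$ or when $\vartheta $ is unbounded, so $\WF _{\mathscr FL^q_{(\vartheta )}}(\delta _0)\neq \emptyset$ for such choices. What makes the corollary work is exactly the hypothesis $\omega =\omega _0$ together with $q=\infty$: then $\mathcal C=\mathscr FL^\infty$ unweighted, and $\widehat{\psi \delta _0}$ is a constant for every $\psi \in C_0^\infty (\rr d)$, so $\delta _0$ lies locally in $\mathscr FL^\infty$ and $\WF _{\mathscr FL^\infty}(\delta _0+\fy _0)=\emptyset$. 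This one-line observation, which is the reason the paper gives, should replace the appeal to smoothness off the origin.
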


\par

\begin{proof}
>From the assumptions it follows that $\op (a)E=\delta _0+\fy $ for
some $\fy \in C^\infty (\rr d)$. The result is then a consequence of
$$
\WF _{\mathscr FL^\infty _{(\omega _0)}} (E) = \WF _{\mathscr
FL^\infty } (\op (a)E) = \WF _{\mathscr FL^\infty } (\delta _0
+\fy )=\emptyset ,
$$
by Theorem \ref{hypoellthm}, where the last equality follows
from the fact that $\delta _0\in \mathscr FL^\infty$.
\end{proof}

\par

\begin{example}\label{examplheat}
Let $a(x,\xi )=a(\xi )$ be the symbol to the hypoelliptic partial
differential operator $\op (a)$ with constant coefficients
(cf. \cite[Chapter XI]{Ho1} for strict definitions). Then $a$ is
elliptic with respect to
$$
\omega (x,\xi )=\omega (\xi ) =(1+|a(\xi )|),
$$
which belongs to $\mathscr P_{\rho ,0}(\rr d)$ for some $\rho
>0$. Hence it follows from Theorem \ref{hypoellthm} and Corollary
\ref{cormainthm23B} that if $\fy \in C_0^\infty (\rr d)$, then
$$
|(1+|a(\xi )|)\mathscr F(\fy \cdot E)(\xi )|
\in L^\infty (\rr {d}).
$$

\par

An important hypoelliptic operator concerns the heat operator
$\partial _t-\Delta _x$, $(x,t)\in \rr {d+1}$, with symbol $a(x,t,\xi
,\tau )=|\xi |^2+i\tau$. In this case, $a$ is
elliptic with respect to
$$
\omega (x,t,\xi ,\tau )= (1+|\xi |^2+|\tau |).
$$
Hence it follows that
$$
(1+|\xi |^4+|\tau |^2)^{1/2}|\mathscr F(\fy \cdot E)(\xi ,\tau)|
\in L^\infty (\rr {d+1}),
$$
when $\fy \in C_0^\infty (\rr {d+1})$.

\par

For the heat operator we note that
$$
\Char '(a)=\sets {(x,t,0,\tau )}{x\in \rr d,\ t\in \mathbf R,\ \tau
\neq 0},
$$
which is not empty (see Remark \ref{compchar} for the definition of
$\Char '(a)$). Hence, $\Char _{(\omega _a)}(a)$ is strictly smaller
than $\Char '(a)$ in this case.
\end{example}

\par

\section{Wave-front sets of sup and inf types and
pseudo-differential operators}\label{sec5}

\par

In this section we put the micro-local analysis in a more general
context comparing to previous sections,
and define wave-front sets with respect to sequences of
Fourier Lebesgue type spaces. We also explain some consequences of
the investigations in previous sections in this general setting. For
example we show how one can obtain micro-local results which involve
only classical wave-front sets (cf. Remark \ref{remstandWF} and
Theorem \ref{thmclassicWFs} below).

\par

Let $\omega _j \in \mathscr
P(\rr {2d})$ and $q_j\in [1,\infty ]$ when $j$ belongs to some index
set $J$, and let $\mathcal B$ be the array of spaces, given by
\begin{equation}\label{notconvsequences}
 (\mathcal B_j) \equiv (\mathcal B_j)_{j\in J},\quad
\text{where}\quad \mathcal B_j=\mathscr FL^{q_j}_{(\omega
_j)}=\mathscr FL^{q_j}_{(\omega _j)} (\rr d), \quad j \in J.
\end{equation}

\par

If $f\in \mathscr S'(\rr d)$, and $(\mathcal B_j)$ is given by
\eqref{notconvsequences}, then we let
$\Theta_{(\mathcal B_j) }^{\sup}(f)$ be the set of all $\xi \in \rr
d\setminus 0$ such that for some $\Gamma = \Gamma _{\xi}$ and each
$j\in J$ it holds $|f|_{\mathcal B_j(\Gamma )} < \infty$. We
also let $\Theta _{(\mathcal B_j) }^{\inf}(f)$ be the set of all $
\xi \in \rr d\setminus 0 $ such that for some $\Gamma = \Gamma
_{\xi}$ and some $j\in J$ it holds $|f|_{\mathcal B_j(\Gamma)} <
\infty$. Finally we let $\Sigma _{(\mathcal B_j) }^{\sup} (f)$ and
$\Sigma _{(\mathcal B_j) }^{\inf} (f)$ be the complements in $\rr
d\setminus 0 $ of $\Theta_{(\mathcal B_j) }^{\sup}(f)$ and $\Theta
_{(\mathcal B_j) }^{\inf} (f)$ respectively.

\par

\begin{defn}\label{defsuperposWF}
Let $J$ be an index set, $q_j\in [1,\infty ]$, $\omega _j\in \mathscr
P(\rr {2d})$ when $j\in J$, $(\mathcal B_j)$ be as in
\eqref{notconvsequences}, and let $X$ be an open subset of $\rr d$.
\begin{enumerate}
\item The wave-front set of $f\in \mathscr D'(X)$,
of \emph{sup-type} with respect to $(\mathcal B_j) $,
$\WF ^{\, \sup} _{(\mathcal B_j) }(f) $,
 consists of all pairs
$(x_0,\xi_0)$ in $X\times (\rr d \setminus 0)$ such that
$
\xi _0 \in  \Sigma ^{\sup} _{(\mathcal B_j)} (\fy f)
$
holds for each $\fy \in C_0^\infty (X)$ such that $\fy (x_0)\neq
0$;

\vrum

\item The wave-front set of $f\in \mathscr D'(X)$,
of \emph{inf-type} with respect to $(\mathcal B_j)$,
$
\WF ^{\, \inf} _{(\mathcal B_j)}(f) $
consists of all pairs
$(x_0,\xi_0)$ in $X\times (\rr d \setminus 0)$ such that
$
\xi _0 \in  \Sigma ^{\sup} _{(\mathcal B_j)} (\fy f)
$
holds for each $\fy \in C_0^\infty (X)$ such that $\fy (x_0)\neq
0$.
\end{enumerate}
\end{defn}

\par

\begin{rem}\label{remstandWF}
Let $\omega _j(x,\xi ) = \eabs \xi ^{-j}$ for $j\in J=\mathbf
N_0$. Then it follows that $\WF _{(\mathcal B_j)}^{\, \sup}(f)$ in
Definition \ref{defsuperposWF} is equal to the standard wave front
set $\WF (f)$ in Chapter VIII in \cite{Ho1}.
\end{rem}

\par

The following result follows immediately from Theorems \ref{mainthm2}
and its proof. We omit the details. Here we let
\begin{equation} \label{notconvCsequence}
\mathcal C_j = \mathscr FL^{q_j}_{(\omega _j/\omega _0)}(\rr d)\quad
\text{and}\quad (\mathcal C_j) =(\mathcal C_j)_{j\in J}.
\end{equation}

\par

\renewcommand{\rubrik}{Theorem \ref{mainthm2}$'$}

\begin{tom}
Let $\rho >0$, $\omega _j\in \mathscr P(\rr
{2d})$, $\omega _0 \in \mathscr P_{\rho ,0}(\rr
{2d})$, $a\in S^{(\omega _0)}_{\rho ,0} (\rr {2d})$, $f\in \mathscr
S'(\rr d)$ and $q_j\in [1,\infty ]$ for $j\in J$. Also let
\begin{equation*}
(\mathcal B_j) \equiv (\mathcal B_j)_{j\in J}\quad \text{and}\quad
(\mathcal C_j) \equiv (\mathcal C_j)_{j\in J},
\end{equation*}
where
$$
\mathcal B_j = \mathscr FL^{q_j}_{(\omega _j)} = \mathscr
FL^{q_j}_{(\omega _j)}(\rr d) \quad \text{and}\quad  \mathcal C_j =
\mathscr FL^{q_j}_{(\omega _j/\omega _0)}=\mathscr FL^{q_j}_{(\omega
_j/\omega _0)}(\rr d).
$$
Then
\begin{multline}\tag*{(\ref{wavefrontemb1})$'$}
\WF ^{\, \sup}_{(\mathcal C_j)} (\op (a)f) \subseteq
\WF ^{\, \sup}_{(\mathcal B_j)} (f)
\\[1ex]
\subseteq \WF ^{\, \sup}_{(\mathcal C_j)} (\op (a)f)\ttbigcup
\Char _{(\omega _0)}(a),
\end{multline}
and
\begin{multline}\tag*{(\ref{wavefrontemb1})$''$}
\WF ^{\, \inf}_{(\mathcal C_j)} (\op (a)f) \subseteq
\WF ^{\, \inf}_{(\mathcal B_j)} (f)
\\[1ex]
\subseteq \WF ^{\, \inf}_{(\mathcal C_j)} (\op (a)f)\ttbigcup
\Char _{(\omega _0)}(a).
\end{multline}
\end{tom}

\par

\begin{rem}\label{remhormWFsets}
We note that many properties valid for the wave-front sets of Fourier
Lebesgue type also hold for wave-front sets in the present
section. For example, the conclusions in Remark \ref{rho0} and Theorem
\ref{hypoellthm} hold for wave-front sets of sup- and inf-types.
\end{rem}

\par

There are (somewhat technical) generalizations of Theorems
\ref{mainthm2} and \ref{mainthm2}$'$ to
pseudo-differential operators with symbols in $S^{(\omega _0)}_{\rho
,\delta}$, when $0\le \delta <\rho \le1$. For example, when
generalizing Theorem \ref{mainthm2}$'$ to $\delta \ge 0$ the key
estimate \eqref{F1aest} needs to be modified into
$$
|(\mathscr F_1a)(\xi ,\eta )|\le C_N\eabs \xi ^{-N}\eabs \eta ^{\delta
N}\omega _0(\eta ).
$$
This in turn implies that in \eqref{wavefrontemb1}$'$ and
\eqref{wavefrontemb1}$''$, the array $(\mathcal C_j)$ on the left-hand
(right-hand) side embeddingsshould be replaced by $(\mathscr
FL^{q_j}_{(\omega _{j,-N_j}/\omega _0)})$, and the right-hand side
embeddings by $(\mathscr FL^{q_j}_{(\omega _{j,N_j}/\omega
_0)})$. Here
$$
\omega _{j,s}(x,\xi )=\omega _j(x,\xi )\eabs \xi ^s,
$$
and
\begin{equation}\label{Njdef}
N_j=\delta (t_j+C),
\end{equation}
where $C$ should be chosen large enough and depends on the order of
the involved distribution $f$ in \eqref{wavefrontemb1}$'$ and
\eqref{wavefrontemb1}$''$ and the dimension $d$, and $t_j$ is chosen
such that the inequality
\begin{equation}\label{t2jdef}
\omega _{j}(x,\xi _1+\xi _2)\le C\omega _{j}(x, \xi _1)\eabs {\xi
_2}^{t_j},
\end{equation}
should hold.

\par

The following generalization of Theorem \ref{mainthm2}$'$ is
obtained by modifying the proof of Proposition \ref{keyprop2AA} and
Theorem \ref{mainthm2}$'$. The details are left for the reader.

\par

\begin{thm}\label{mainthm44}
Let $0\le \delta <\rho \le 1$, $\omega _j\in \mathscr P(\rr
{2d})$, $\omega _0 \in \mathscr P_{\rho ,\delta}(\rr
{2d})$, $a\in S^{(\omega _0)}_{\rho ,0} (\rr {2d})$, $f\in \mathscr
S'(\rr d)$ and $q_j\in [1,\infty ]$ for $j\in J$. Also let $N_j$ be
given by \eqref{Njdef} with $C$ only depending on the order of $f$ and
the dimension $d$,
\begin{equation*}
(\mathcal B_j) \equiv (\mathcal B_j)_{j\in J}\quad \text{and}\quad
(\mathcal C_j^{\pm}) \equiv (\mathcal C_j^{\pm})_{j\in J},
\end{equation*}
where
$$
\mathcal B_j = \mathscr FL^{q_j}_{(\omega _j)} = \mathscr
FL^{q_j}_{(\omega _j)}(\rr d) \quad \text{and}\quad  \mathcal C_j
^{\pm}= \mathscr FL^{q_j}_{(\omega _{j,\pm N_j}/\omega _0)}=\mathscr
FL^{q_j}_{(\omega _{j,\pm N_j}/\omega _0)}(\rr d).
$$
Then
\begin{multline}\label{wavefrontemb2}
\WF ^{\, \sup}_{(\mathcal C^- _j)} (\op (a)f) \subseteq
\WF ^{\, \sup}_{(\mathcal B_j)} (f)
\\[1ex]
\subseteq \WF ^{\, \sup}_{(\mathcal C^+ _j)} (\op (a)f)\ttbigcup
\Char _{(\omega _0)}(a),
\end{multline}
and
\begin{multline}\tag*{(\ref{wavefrontemb2})$'$}
\WF ^{\, \inf}_{(\mathcal C^- _j)} (\op (a)f) \subseteq
\WF ^{\, \inf}_{(\mathcal B_j)} (f)
\\[1ex]
\subseteq \WF ^{\, \inf}_{(\mathcal C^+ _j)} (\op (a)f)\ttbigcup
\Char _{(\omega _0)}(a),
\end{multline}
provided $C$ in \eqref{Njdef} is chosen large enough.
\end{thm}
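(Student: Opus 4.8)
The plan is to re-run the arguments behind Proposition \ref{keyprop2AA} and the proof of Theorem \ref{mainthm2}$'$, this time keeping track of the polynomial factors in $\xi$ which the $\delta$-growth of the symbol class produces under $x$-differentiation. As in the proof of Proposition \ref{keyprop2AA}, the local and conical character of the assertions reduces us to the case where $f\in\mathscr E'(\rr d)$, where $a$ is compactly supported in the $x$-variable, and where the weights $\omega _j$ are independent of $x$. Since $\omega _0\in\mathscr P_{\rho ,\delta}$, the $x$-derivatives of $a$ obey $|\partial _x^\alpha a(x,\eta )|\le C_\alpha\,\omega _0(\eta )\eabs\eta^{\delta |\alpha |}$, so $N$ integrations by parts in $x$ give the replacement for \eqref{F1aest},
\begin{equation*}
|(\mathscr F_1a)(\xi ,\eta )|\le C_N\eabs\xi^{-N}\eabs\eta^{\delta N}\omega _0(\eta ).
\end{equation*}
Throughout, $N$ will be taken large; the loss it produces is $N_j=\delta N$, and choosing $N$ of the form $t_j+C$ with $t_j$ as in \eqref{t2jdef} yields precisely \eqref{Njdef}.

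Inserting this estimate into \eqref{pseudoreform}, using the elementary inequality $\eabs\eta^{\delta N}\le C\eabs\xi^{\delta N}\eabs{\xi -\eta }^{\delta N}$ and the moderateness of $\omega _0$ and $\omega _j$, one obtains for each large $N$
\begin{equation*}
\bigl|\mathscr F(\op (a)f)(\xi )\,\omega _{j,-N_j}(\xi )/\omega _0(\xi )\bigr|\le C\int _{\rr d}\eabs{\xi -\eta }^{-(1-\delta )N+t_j+N_0}\,|\widehat f(\eta )\omega _j(\eta )|\,d\eta ;
\end{equation*}
since $\delta <1$ the exponent is as negative as we wish once $N$ is large, so after H{\"o}lder's inequality this is \eqref{estpseudo} with $\omega$ replaced by $\omega _j$ and $\omega /\omega _0$ replaced by $\omega _{j,-N_j}/\omega _0$. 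The cone-splitting estimates for $J_1$ and $J_2$ in the proof of Proposition \ref{keyprop2AA} then go through verbatim and give: if $\overline{\Gamma _2}\subseteq\Gamma _1$ and $|f|_{\mathscr FL^{q_j}_{(\omega _j)}(\Gamma _1)}<\infty$, then $|\op (a)f|_{\mathcal C_j^-(\Gamma _2)}<\infty$ for $N$ large enough. Combining this with Proposition \ref{propmain1AA} to discard the part of $f$ away from $x_0$, exactly as in the proof of Theorem \ref{mainthm2}, yields the first inclusions in \eqref{wavefrontemb2} and \eqref{wavefrontemb2}$'$.

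For the second inclusions we follow the proof of Theorem \ref{mainthm2}$'$. Assume $(x_0,\xi _0)\notin\WF ^{\,\sup}_{(\mathcal C_j^+)}(\op (a)f)\ttbigcup\Char _{(\omega _0)}(a)$; by Proposition \ref{propmain1AA} we may take $f\in\mathscr E'(\rr d)$. We pick $b_j,c_j,h_j$ as in Lemma \ref{pseudolemma3} (available since $0\le\delta <\rho\le 1$) and write $f=\op (1-c_j)f+\op (b_j)\op (a)f+\op (h_j)f$. The term $\op (h_j)f$, with $h_j\in S^{-j\mu}_{\rho ,\delta}$, is smooth enough near $(x_0,\xi _0)$ to lie locally in every $\mathscr FL^{q_{j'}}_{(\omega _{j'})}$ once the approximation order $j$ is large enough; the term $\op (b_j)\op (a)f$ is controlled by the first inclusion applied to $b_j\in S^{(1/\omega _0)}_{\rho ,\delta}$, where the weight factor $\eabs\cdot^{N_{j'}}$ built into $\mathcal C^+_{j'}$ is exactly what absorbs the loss coming from $\op (b_j)$, provided $C$ in \eqref{Njdef} is large; and the term $\op (1-c_j)f$ is handled by the cone-splitting estimate of Proposition \ref{keyprop2AA}, since $1-c_j$ vanishes on a conical neighbourhood of $(x_0,\xi _0)$. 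Adding the three contributions gives $(x_0,\xi _0)\notin\WF ^{\,\sup}_{(\mathcal B_j)}(f)$, and replacing "each $j'$" by "some $j'$" throughout gives the inf-type statement \eqref{wavefrontemb2}$'$.

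The main obstacle is the bookkeeping of these polynomial losses: one has to verify that a single $N_j$ — equivalently a single choice of $C$ in \eqref{Njdef}, depending only on $\delta$, on $t_j$, on the order of $f$ and on $d$ — simultaneously dominates the decay needed for the $J_2$-type integrals, the loss under $\op (a)$ in Proposition \ref{keyprop2AA}, and the loss under $\op (b_j)$ in the parametrix step, and that the cone $\Gamma =\Gamma _{\xi _0}$ and the cutoffs furnished by Lemma \ref{pseudolemma3} can be chosen independently of the approximation order $j$. These checks are routine but notationally heavy, which is why the details are left to the reader.
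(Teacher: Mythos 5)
Your proposal follows exactly the route the paper intends: the modified kernel estimate $|(\mathscr F_1a)(\xi ,\eta )|\le C_N\eabs \xi ^{-N}\eabs \eta ^{\delta N}\omega _0(\eta )$ fed into the proof of Proposition \ref{keyprop2AA}, with the $\eabs \eta ^{\delta N}\le C\eabs \xi ^{\delta N}\eabs {\xi -\eta}^{\delta N}$ trade producing the loss $N_j$ absorbed into $\omega _{j,\pm N_j}$, followed by the parametrix decomposition of Lemma \ref{pseudolemma3} as in Theorem \ref{mainthm2}$'$. This is precisely the modification the paper sketches and leaves to the reader, carried out at the same level of detail, so the proof is correct and essentially identical in approach.
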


\par

A combination of Remark \ref{remhormWFsets} and Theorem
\ref{mainthm44} now gives the following result concerning wave-front
sets of H{\"o}rmander type.

\par

\begin{thm}\label{thmclassicWFs}
Let $0\le \delta <\rho \le 1$ and $\omega _0\in \mathscr
P_{\rho,\delta }(\rr {2d})$. For every $f\in \mathscr S'(\rr d)$ and
$a\in S^{(\omega _0)}_{\rho ,\delta}(\rr {2d})$ it holds
$$
\WF  (\op (a)f) \subseteq \WF (f) \subseteq \WF  (\op (a)f)\ttbigcup
\Char _{(\omega _0)}(a).
$$
In particular, if in addition $a$ is elliptic with respect to $\omega
_0$, then
$$
\WF  (\op (a)f) = \WF (f).
$$
\end{thm}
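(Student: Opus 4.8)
The plan is to obtain this statement as a specialization of Theorem \ref{mainthm44}, by using the identification of the classical wave-front set with a sup-type wave-front set. First I would take the index set $J=\mathbf N_0$ together with the weights $\omega _j$ and exponents $q_j$ of Remark \ref{remstandWF}, so that $\WF (g)=\WF ^{\,\sup}_{(\mathcal B_j)}(g)$ for every $g\in \mathscr D'(X)$, where $\mathcal B_j=\mathscr FL^{q_j}_{(\omega _j)}(\rr d)$. Then I would apply to $a\in S^{(\omega _0)}_{\rho ,\delta}(\rr {2d})$ the natural extension to $\delta \ge 0$ of Theorem \ref{mainthm44} indicated in the discussion preceding it; the only change in its proof is the extra factor $\eabs \eta ^{\delta N}$ in the analogue of \eqref{F1aest}, and since $J=\mathbf N_0$ exhausts all orders this factor is absorbed by passing from $\omega _j$ to $\omega _{j,\pm N_j}$. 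With $\omega _{j,\pm N_j}(x,\xi )=\omega _j(x,\xi )\eabs \xi ^{\pm N_j}$, the inclusions \eqref{wavefrontemb2} then read
$$
\WF ^{\,\sup}_{(\mathcal C^-_j)}(\op (a)f)\subseteq \WF (f)\subseteq \WF ^{\,\sup}_{(\mathcal C^+_j)}(\op (a)f)\ttbigcup \Char _{(\omega _0)}(a),
$$
where $\mathcal C_j^{\pm}=\mathscr FL^{q_j}_{(\omega _{j,\pm N_j}/\omega _0)}(\rr d)$.

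The core remaining step is to identify the two sup-type sets on the ends with the classical wave-front set of $\op (a)f$, i.e. to show $\WF ^{\,\sup}_{(\mathcal C_j^{\pm})}(\op (a)f)=\WF ^{\,\sup}_{(\mathcal B_j)}(\op (a)f)$. By locality and the $x$-independence of the cone semi-norms, it is enough to compare the sets $\Theta ^{\sup}$ for a compactly supported localization and with $x$ fixed in a compact set. On such a set $\omega _0\in \mathscr P_{\rho ,\delta}\subseteq \mathscr P$ is squeezed between two fixed powers of $\eabs \xi$, and each exponent $N_j$ from \eqref{Njdef} is finite; hence the weight of $\mathcal C_j^{\pm}$ differs from that of $\mathcal B_j$ only by a factor bounded above and below by powers of $\eabs \xi$, so the shifted family $\{\omega _{j,\pm N_j}/\omega _0\}_{j\in \mathbf N_0}$ still exhausts all orders. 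Consequently, for any cone $\Gamma$ the conditions ``$|h|_{\mathcal C_j^{\pm}(\Gamma )}<\infty$ for all $j$'' and ``$|h|_{\mathcal B_j(\Gamma )}<\infty$ for all $j$'' are equivalent, modulo the standard slight shrinking of $\Gamma$, exactly as in the proof of Theorem \ref{wavefrontprop1}; therefore $\WF ^{\,\sup}_{(\mathcal C_j^{\pm})}(\op (a)f)=\WF (\op (a)f)$ by Remark \ref{remstandWF}. Substituting this back gives $\WF (\op (a)f)\subseteq \WF (f)\subseteq \WF (\op (a)f)\ttbigcup \Char _{(\omega _0)}(a)$, and when $a$ is elliptic with respect to $\omega _0$ we have $\Char _{(\omega _0)}(a)=\emptyset$ (as recorded just before Theorem \ref{hypoellthm}), so the two inclusions collapse to the equality $\WF (\op (a)f)=\WF (f)$.

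Most of this is bookkeeping and citation; I expect the two points that need genuine care to be (i) checking that the proof of Theorem \ref{mainthm44} really does go through for symbols in $S^{(\omega _0)}_{\rho ,\delta}$ and not only $S^{(\omega _0)}_{\rho ,0}$, and (ii) the cofinality argument of the second paragraph — that dividing the test weights by the admissible weight $\omega _0$ and shifting them by the finite exponents $N_j$ does not change the resulting sup-type wave-front set. Both rest on $\omega _0$ being polynomially moderate together with $J=\mathbf N_0$ being rich enough that the perturbed family of weights still reaches every order.
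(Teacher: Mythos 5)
Your proposal is correct and follows exactly the route the paper intends: the paper's entire proof is the one-line remark that the theorem follows by combining Remark \ref{remstandWF}/\ref{remhormWFsets} with Theorem \ref{mainthm44}, and your write-up is precisely that combination with the omitted details (the identification $\WF=\WF^{\,\sup}_{(\mathcal B_j)}$ and the cofinality argument showing $\WF^{\,\sup}_{(\mathcal C_j^{\pm})}(\op (a)f)=\WF(\op (a)f)$, which works because $N_j=\delta (t_j+C)$ grows with slope $\delta <1$ so the shifted weights still exhaust all orders) made explicit. The two points you flag as needing care are exactly the points the paper leaves to the reader.
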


\par

\section{Wave front sets with respect to modulation
spaces}\label{sec6}

\par

In this section we define wave-front sets with respect to modulation
spaces, and show that they coincide with wave-front sets of Fourier
Lebesgue types. In particular, any property valid for wave-front set
of Fourier Lebesgue type carry over to wave-front set of modulation
space type.

\par

Let $\phi \in \mathscr S(\rr d )\setminus 0$, $\omega \in
\mathscr P(\rr {2d})$, $\Gamma \subseteq \rr d\setminus 0$ be an open
cone and let $p,q\in [1,\infty ]$. For any $f\in \mathscr S'(\rr d)$
we set
\begin{multline}\label{modseminorm}
|f|_{\mathcal B(\Gamma )} = |f|_{\mathcal B(\phi ,\Gamma )}
\equiv
\Big ( \int _{\Gamma} \Big ( \int _{\rr {d}} | V_\phi f(x,\xi )\omega
(x,\xi )|^p\, dx\Big )^{q/p}\, d\xi \Big )^{1/q}
\\[1ex]
\text{when}\quad \mathcal B=M^{p,q}_{(\omega )}=M^{p,q}_{(\omega
)}(\rr d)
\end{multline}
(with obvious interpretation when $p=\infty$ or $q=\infty$). We
note that $|\cdo |_{\mathcal B(\Gamma )}$ defines a semi-norm
on $\mathscr S'$ which might attain the value $+\infty$. If $\Gamma
=\rr d\setminus 0$, then $|f|_{\mathcal B(\Gamma )} = \nm
f{M^{p,q}_{(\omega )}}$. We also set
\begin{multline}\label{modseminorm2}
|f|_{\mathcal B(\Gamma )} = |f|_{\mathcal B(\phi ,\Gamma )}
\equiv
\Big ( \int _{\rr {d}} \Big ( \int _{\Gamma} | V_\phi f(x,\xi )\omega
(x,\xi )|^q\, d\xi \Big )^{p/q}\, dx \Big )^{1/p}
\\[1ex]
\text{when}\quad \mathcal B=W^{p,q}_{(\omega )}=W^{p,q}_{(\omega
)}(\rr d)
\end{multline}
and note that similar properties hold for this semi-norm comparing to
\eqref{modseminorm}.

\par

Let $\omega \in \mathscr P(\rr {2d})$, $p,q\in [1,\infty ]$, $f\in
\mathscr D'(X)$, and let
$\mathcal B=M^{p,q}_{(\omega )}$ or $\mathcal B=W^{p,q}_{(\omega
)}$. Then $\Theta _{\mathcal B}(f)$, $\Sigma
_{\mathcal B}(f)$ and the wave-front set $\WF
_{\mathcal B}(f)$ of $f$ with respect to the modulation space
$\mathcal B$ are defined in the same way as in Section
\ref{sec2}, after replacing the semi-norms of Fourier Lebesgue types in
\eqref{notconv} with the semi-norms in \eqref{modseminorm} or
\eqref{modseminorm2}.

\par

The following result shows that wave-front sets of Fourier Lebesgue
 and modulation space types agree with each others.

\par

\begin{thm}\label{wavefrontsequal}
Let $p,q\in [1,\infty ]$, an open set $X\subseteq \rr d$,
$\omega \in \mathscr P(\rr {2d})$, $\mathcal B=\mathscr FL^q_{(\omega
)}(\rr d)$, and let $\mathcal C=M^{p,q}_{(\omega )}(\rr d)$ or
$W^{p,q}_{(\omega )}(\rr d)$. If $f\in \mathscr D'(X)$, then
\begin{equation}\label{WFidentities2}
\WF _{\mathcal B}(f)= \WF _{\mathcal C}(f).
\end{equation}
In particular, $\WF  _{\mathcal C}(f)$ is independent of $p$ and
$\phi \in \mathscr S(\rr d)\setminus 0$ in \eqref{modseminorm} and
\eqref{modseminorm2}.
\end{thm}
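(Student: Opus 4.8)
The plan is to reduce the equality $\WF_{\mathcal B}(f)=\WF_{\mathcal C}(f)$ to a purely local statement about the conical semi-norms $|\cdot|_{\mathcal B(\Gamma)}$ and $|\cdot|_{\mathcal C(\Gamma)}$, namely: if $f\in\mathscr E'(\rr d)$ and $\xi_0\in\rr d\setminus 0$, then $\xi_0\in\Theta_{\mathcal B}(f)$ if and only if $\xi_0\in\Theta_{\mathcal C}(f)$. Since both wave-front sets are defined by localizing with $\fy\in C_0^\infty(X)$ and testing membership of $\fy f$ in the appropriate conical space, and $\fy f\in\mathscr E'$, it suffices to work with compactly supported distributions throughout. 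Also, since $\WF_{\mathcal B}(f)$ is independent of the base point $x$ in the weight, we may assume $\omega(x,\xi)=\omega(\xi)$ is $x$-independent; and by Proposition~\ref{p1.4}(1) the modulation-space seminorm is independent of the window $\phi\in M^1_{(v)}\setminus 0$ up to equivalent norms, so we may fix $\phi\in\mathscr S(\rr d)\setminus 0$ once and for all, which will also yield the claimed independence of $p$ and $\phi$ at the end.

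The core of the argument is the comparison between $\widehat{\fy f}$ and $V_\phi(\fy f)$ in a cone. First I would show $\Theta_{\mathcal B}(f)\subseteq\Theta_{\mathcal C}(f)$ for the case $\mathcal C=M^{p,q}_{(\omega)}$: choose a window of the form $\phi=\overline{\widehat\psi}$ or, more directly, use the identity $V_\phi g(x,\xi)=(2\pi)^{-d/2}\,e^{-i\scal x\xi}\,(\widehat g*\widehat{\overline{\phi(\cdot)}})(\xi)\cdot$ (up to modulation), so that $|V_\phi g(x,\xi)|=(2\pi)^{-d/2}|(\widehat g*\Phi)(\xi)|$ with $\Phi=\mathscr F\big(\overline{\phi(-\cdot)}\big)\in\mathscr S$. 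Thus, for fixed $x$, $\xi\mapsto V_\phi g(x,\xi)$ is a Schwartz-class mollification of $\widehat g$ in the frequency variable, uniformly in $x$ (here $g=\fy f$ has compact support, so $V_\phi g(x,\cdot)$ decays rapidly in $x$ by Lemma~\ref{stftcompact}). Now run exactly the cone-splitting estimate from the proof of Theorem~\ref{wavefrontprop1}: with $\overline{\Gamma_2}\subseteq\Gamma_1$ and $\Gamma_1\subseteq\Theta_{\mathcal B}$-good for $g$, split the convolution $\widehat g*\Phi$ over $\Gamma_1$ and $\complement\Gamma_1$; on $\Gamma_1$ apply Young's inequality (the $x$-integral is harmless because of the rapid decay in $x$, giving an $L^p_x$ factor that is finite), and on $\complement\Gamma_1$ use $|\xi-\eta|\ge c\max(|\xi|,|\eta|)$ together with the polynomial bound $|\widehat g(\eta)|\le C\eabs\eta^{N_0}$ and the rapid decay of $\Phi$ to absorb everything. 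This gives $|g|_{M^{p,q}_{(\omega)}(\Gamma_2)}<\infty$, i.e. $\xi_0\in\Theta_{\mathcal C}(g)$. The Wiener-amalgam case $\mathcal C=W^{p,q}_{(\omega)}$ is identical after interchanging the roles of the $x$- and $\xi$-integrations.

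For the reverse inclusion $\Theta_{\mathcal C}(f)\subseteq\Theta_{\mathcal B}(f)$, I would exploit Lemma~\ref{stftproperties}: choosing $\phi_1,\phi_2,\phi_3\in\mathscr S$ with $(\phi_3,\phi_1)_{L^2}\ne 0$ we get $V_{\phi_2}g=(\phi_3,\phi_1)_{L^2}^{-1}(V_{\phi_1}g)\,\widehat *\,(V_{\phi_2}\phi_3)$, and since $V_{\phi_2}\phi_3\in\mathscr S(\rr{2d})$ this is the vehicle for transferring conical control back and forth; alternatively, and more simply, recover $\widehat g$ from $V_\phi g$ by the inversion formula $\widehat g(\xi)=c\int V_\phi g(x,\xi)\,\overline{\widehat\phi(\cdots)}\,dx$-type identity combined again with a cone-splitting estimate. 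The point is that $|\widehat g(\xi)\omega(\xi)|$ is controlled, on a slightly smaller cone $\Gamma_2$, by $\int_{\Gamma_1}|V_\phi g(x,\xi)\omega(\xi)|\,dx$ plus a rapidly-decaying off-cone remainder, and the first term is finite in $L^q_\xi(\Gamma_2)$ by hypothesis (using again that $g$ has compact support, so $x$-integration over a compact set is bounded and the remainder integrals converge). Passing from the seminorm equivalence on $\mathscr E'$ to the equality of wave-front sets is then immediate from the definitions. The main obstacle, and the step deserving the most care, is the bookkeeping of the off-cone terms in both directions: one must check that the extra base-point/window integration (the $\int dx$ in the modulation seminorm versus its absence in the Fourier--Lebesgue seminorm) does not destroy the rapid-decay gain, which is exactly where the compact support of $\fy f$ and Lemma~\ref{stftcompact} are essential, and where the hypotheses $0\le\delta$, $\rho>0$ play no role (the statement is elementary in that respect) but the moderateness of $\omega$ does.
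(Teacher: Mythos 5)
Your proposal follows essentially the same route as the paper: reduce to $f\in\mathscr E'$ with $x$-independent weight, prove window-independence via Lemma \ref{stftproperties} plus the cone-splitting estimate of Theorem \ref{wavefrontprop1}, pass from $\mathscr FL^q_{(\omega)}$ to the conical $M^{p,q}_{(\omega)}$ seminorm by viewing $V_\phi g(x,\cdot)$ as a Schwartz mollification of $\widehat g$, and come back by averaging $V_\phi g$ in $x$ (the paper realizes this as $\widehat f=\mathscr F\bigl(f\int\phi(\cdot-x)\,dx\bigr)$), using the compact support of $g$ and of the window to control the $x$-integration. The only point where the paper is more careful than your sketch is the handling of general $p$: it establishes the two inclusions at the endpoints $p=1$ and $p=\infty$ and then connects an arbitrary $p$ by H\"older's inequality on the compact $x$-support, whereas your direct argument for general $p<\infty$ tacitly needs the joint bound $|V_\phi g(x,\xi)|\le C_N\eabs x^{-N}\bigl(|\widehat g|*\eabs\cdot^{-N}\bigr)(\xi)$ (obtainable by inserting a cutoff equal to $1$ on $\supp g$), not merely the two separate facts of rapid decay in $x$ and uniform-in-$x$ mollification in $\xi$.
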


\par

\begin{proof}
We only consider the case $\mathcal C = M^{p,q}_{(\omega )}$. 
The case $\mathcal C = W^{p,q}_{(\omega )}$ follows by similar arguments and is
left for the reader. We may also assume that $f\in \mathscr E'(\rr
d)$ and that $\omega (x,\xi )=\omega (\xi )$, since the statements only
involve local assertions.

\par

First we prove that $\WF _{\mathcal C}(f)$ is independent of $\phi \in
\mathscr S(\rr d)\setminus 0$. Therefore assume that $\phi ,\phi _1\in
\mathscr S\setminus 0$ and let $|\cdo |_{\mathcal C_1(\Gamma )}$ be
the semi-norm in \eqref{modseminorm} after $\phi$ has been replaced by
$\phi _1$. Let $\Gamma _1$ and $\Gamma _2$ be open cones in $\rr d$
such that $\overline {\Gamma _2}\subseteq \Gamma _1$. The asserted
independency of $\phi$ follows if we prove that
\begin{equation}\label{est2.6}
|f|_{\mathcal C(\Gamma _2)} \le C(|f|_{\mathcal C_1(\Gamma _1)}+1),
\end{equation}
for some constant $C$.

\par

When proving \eqref{est2.6} we shall mainly follow the proof of
\eqref{cuttoff1}. Let $v\in \mathscr P$ be chosen such that $\omega$
is $v$-moderate, let
$$
\Omega _1=\sets {(x,\xi )}{\xi \in \Gamma _1}\subseteq \rr {2d}\quad
\text{and}\quad \Omega
_2=\complement \Omega _1\subseteq \rr {2d},
$$
with characteristic functions $\chi _1$ and $\chi _2$ respectively,
and set
$
F_k(x,\xi )=|V_{\phi _1}f(x,\xi )|\omega (\xi )\chi _k(x,\xi )$,
$k=1,2,$ and $  G=|V_\phi \phi _1(x,\xi )|v(\xi ).$
By Lemma \ref{stftproperties}, and the fact that $\omega$ is
$v$-moderate we get
$$
|V_\phi f(x,\xi )\omega (x,\xi )|\le C \big ( (F_1+F_2)*G\big )(x,\xi
),
$$
for some constant $C$, which implies that
\begin{equation}\label{fJ1J2igen}
|f|_{\mathcal C(\Gamma _2)} \le C(J_1+J_2),
\end{equation}
where
$$
J_k = \Big (\int _{\Gamma _2} \Big (\int |(F_k*G)(x,\xi )|^p\, dx\Big
)\, d\xi \Big )^{1/q}, \quad k=1,2.
$$

\par

Since $G$ turns rapidly to zero at infinity, Young's inequality
gives
\begin{equation}\label{J1estimateB}
J_1\le \nm {F_1*G}{L^{p,q}_1}\le \nm G{L^1}\nm {F_1}{L^{p,q}_1}
=C|f|_{\mathcal C_1(\Gamma _1)},
\end{equation}
where $C=\nm G{L^1}<\infty$.

\par

Next we consider $J_2$. By Lemma \ref{stftcompact} and the proof of
\eqref{cuttoff1}, it follows that for every $N\ge 0$ there are
constants $C_N$ such that
$$
F_2(x,\xi ) \le C_N\eabs x^{-N}\eabs \xi ^{N_0},\quad \text{and}\quad
\eabs {\xi -\eta }^{-2N}\le C_N\eabs \xi ^{-N}\eabs \eta ^{-N}
$$
when $\xi \in \Gamma _2$ and $\eta \in \complement \Gamma _1$. This in
turn implies that for every $N\ge 0$ there are constants $C_N$ such
that
$$
(F_2*G)(x,\xi )\le C_N\eabs x^{-N}\eabs \xi ^{-N},\quad \xi \in \Gamma
_2.
$$
Consequently, $J_2<\infty$. The estimate \eqref{est2.6} is now a
consequence of \eqref{fJ1J2igen}, \eqref{J1estimateB} and the fact
that $J_2<\infty$. This proves that $\WF _{\mathcal C}(f)$ is
independent of $\phi \in \mathscr S(\rr d)\setminus 0$.

\medspace

In order to prove \eqref{WFidentities2} we assume from now on that
$\phi$ in \eqref{modseminorm} has compact support. We choose
$p_0,p_1\in [1,\infty ]$ such
that $p_0\le p$ and $1/p_1+1/p_0=1+1/p$, and we set $\mathcal C_0
=M^{p_0,q}_{(\omega )}$. The result follows if we prove
\begin{gather}
\Theta _{\mathcal C_0}(f)\subseteq \Theta _{\mathcal B}(f)\subseteq
\Theta _{\mathcal C}(f)\quad \text{when}\ p_0=1,\
p=\infty ,\label{Thetaest1}
\intertext{and}
\Theta _{\mathcal C}(f)\subseteq \Theta _{\mathcal
C_0}(f).\label{Thetaest2}
\end{gather}

\par

We start to prove \eqref{Thetaest1}. We have
\begin{align*}
|f|_{\mathcal B(\Gamma )} &\le
C_1\Big ( \int _{\Gamma} |\widehat f(\xi )\omega (\xi )|^{q}\, d\xi
\Big )^{1/q}
\\[1ex]
&= C_2\Big ( \int _{\Gamma} |\mathscr F \Big ( f \int _{\rr {d}}\phi
(\cdot - x)\,  dx\Big ) (\xi ) \omega (\xi )|^{q}\, d\xi \Big
)^{1/q}
\\[1ex]
& C_2\leq \Big ( \int _{\Gamma} \Big (  \int _{\rr {d}} |\mathscr F (
f \phi (\cdot -x)) (\xi ) \omega (\xi )|\,  dx \Big )^{q}\, d\xi \Big
)^{1/q}
\\[1ex]
&\leq C_3 \Big ( \int _{\Gamma} \Big (  \int _{\rr {d}} |V_{\phi}
f(x,\xi )\omega (\xi )|\,  dx \Big )^{q} \, d\xi \Big )^{1/q} = C_3
|f|_{\mathcal C_0(\Gamma )}
\end{align*}
for some constants $C_1$, $C_2$ and $C_3$. This gives the first
inclusion in \eqref{Thetaest1}.

\par

Next we prove the second inclusion in \eqref{Thetaest1}. Let $K
\subseteq \rr d$ be compact and chosen such that $V_\phi f(x,\xi )=0$
outside $K$. This is possible since both $f$ and $\phi$ have compact
supports. Then
\begin{multline*}
|f|_{\mathcal C(\Gamma _2)} = \Big ( \int _{\Gamma _2}
\sup_{x \in K} | V_\phi f(x,\xi )\omega (x,\xi ) |^{q}\, d\xi \Big
)^{1/q}
\\[1ex]
\leq C_1 \Big ( \int _{\Gamma _2} \sup_{x \in  \rr d}
|( |\widehat f| * | \mathscr F (\phi (\cdot - x)) | ) (\xi) \omega
(\xi )|^{q}\, d\xi \Big )^{1/q}
\\[1ex]
= C_1  \Big ( \int _{\Gamma _2}
|( |\widehat f| * | \widehat \phi  | ) (\xi) \omega (\xi )|^{q}\,
d\xi \Big )^{1/q}
\\[1ex]
\leq C_2  \Big ( \int _{\Gamma _2} \big ( ( |\widehat f \cdot  \omega
| * |\widehat \phi \cdot v| ) (\xi) \big )^{q} \, d\xi \Big )^{1/q},
\end{multline*}
for some positive constants $C_1$, $C_2$ and $C_3$. By combining the
latter estimates with \eqref{cuttoff1} and
its proof it now follows that for each $N\ge 0$ there are constants
$C_N$ such that \eqref{cuttoff1} holds with $\fy =1$ and $\mathcal
B_0=M^{p,q}_{(\omega )}$. Since $f$ has compact support it follows
that the right-hand side of \eqref{cuttoff1} is finite when $|
f|_{\mathcal B(\Gamma _1)}<\infty$, provided $N$ is chosen large
enough. This proves \eqref{Thetaest1}.

\par

It remains to prove \eqref{Thetaest2}. Let $K$ be as above. By
H{\"o}lder's inequality we get
\begin{multline*}
|f|_{\mathcal C_0(\Gamma )} = \Big ( \int _{\Gamma} \Big
( \int _{\rr {d}} | V_\phi f(x,\xi )\omega (x,\xi )|^{p_0}\, dx\Big
)^{q/p_0}\, d\xi \Big )^{1/q}
\\[1ex]
\le C_K\Big ( \int _{\Gamma} \Big (
\int _{\rr {d}} | V_\phi f(x,\xi )\omega (x,\xi )|^{p}\, dx\Big
)^{q/p}\, d\xi \Big )^{1/q} = C_K|f|_{\mathcal C(\Gamma )}.
\end{multline*}
This gives \eqref{Thetaest2}, and the proof is complete.
\end{proof}

\par

\begin{cor}
Let $ f \in  \mathscr E'(\rr d)$, and let
$\mathcal B$ be equal to $\mathscr FL^{q}_{(\omega)}$,
$M^{p,q}_{(\omega )}$ or $W^{p,q}_{(\omega )}$. Then
$$
f \in \mathcal B \quad
\Longleftrightarrow \quad
\WF _{\mathcal B }(f) =\emptyset.
$$
\end{cor}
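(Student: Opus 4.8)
The plan is to reduce to a single function space and then prove the two implications separately. By Theorem \ref{wavefrontsequal} the three wave-front sets $\WF_{\mathscr FL^q_{(\omega)}}(f)$, $\WF_{M^{p,q}_{(\omega)}}(f)$ and $\WF_{W^{p,q}_{(\omega)}}(f)$ coincide, and by the local identities $\mathscr FL^q_{(\omega)}\cap\mathscr E' = M^{p,q}_{(\omega)}\cap\mathscr E' = W^{p,q}_{(\omega)}\cap\mathscr E'$ (Remark 4.4 in \cite{RSTT}) the three membership conditions $f\in\mathcal B$ are equivalent for $f\in\mathscr E'$. Hence it suffices to prove $f\in\mathscr FL^q_{(\omega)}\Leftrightarrow\WF_{\mathscr FL^q_{(\omega)}}(f)=\emptyset$, and I set $\mathcal B=\mathscr FL^q_{(\omega)}$.

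For the direction $f\in\mathcal B\Rightarrow\WF_{\mathcal B}(f)=\emptyset$, I would first record the elementary fact that multiplication by any $\fy\in C_0^\infty(\rr d)$ maps $\mathscr FL^q_{(\omega)}$ into itself: since $\widehat{\fy f}=(2\pi)^{-d/2}\widehat\fy*\widehat f$ and $\omega$ is $v$-moderate for some $v\in\mathscr P(\rr d)$, one has the pointwise bound $|\widehat{\fy f}\,\omega|\le C\,(|\widehat\fy|\,v)*(|\widehat f|\,\omega)$ with $|\widehat\fy|\,v\in L^1$ and $|\widehat f|\,\omega\in L^q$, so Young's inequality gives $\fy f\in\mathscr FL^q_{(\omega)}$. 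Consequently, if $f\in\mathscr FL^q_{(\omega)}$ then for every $\fy\in C_0^\infty(\rr d)$ we have $|\fy f|_{\mathcal B(\rr d\setminus 0)}=\nm{\fy f}{\mathscr FL^q_{(\omega)}}<\infty$ (obvious modification when $q=\infty$), so $\Sigma_{\mathcal B}(\fy f)=\emptyset$, and therefore $\WF_{\mathcal B}(f)=\emptyset$.

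The substance of the statement is the converse. Assume $\WF_{\mathcal B}(f)=\emptyset$ and fix $x_0\in\rr d$. For each $\xi_0\in\mathsf S^{d-1}$ the point $(x_0,\xi_0)$ lies outside the wave-front set, so there are $\fy_{\xi_0}\in C_0^\infty(\rr d)$ with $\fy_{\xi_0}(x_0)\neq 0$ and an open cone $\Gamma_{\xi_0}\ni\xi_0$ with $|\fy_{\xi_0}f|_{\mathcal B(\Gamma_{\xi_0})}<\infty$; shrinking slightly I also fix an open cone $\Gamma_{\xi_0}'$ with $\xi_0\in\Gamma_{\xi_0}'$ and $\overline{\Gamma_{\xi_0}'}\subseteq\Gamma_{\xi_0}$. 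By compactness of $\mathsf S^{d-1}$ finitely many $\Gamma_{\xi_1}',\dots,\Gamma_{\xi_n}'$ cover $\mathsf S^{d-1}$, hence cover $\rr d\setminus 0$. Writing $\fy_j=\fy_{\xi_j}$, $\Gamma_j=\Gamma_{\xi_j}$, $\Gamma_j'=\Gamma_{\xi_j}'$, choose $\psi\in C_0^\infty(\rr d)$ with $\psi(x_0)\neq 0$ and $\supp\psi\subseteq\bigcap_j\{\fy_j\neq 0\}$. Then $\psi/\fy_j\in C_0^\infty(\rr d)$, $\psi f=(\psi/\fy_j)(\fy_j f)$, and applying the estimate \eqref{cuttoff1} from the proof of Theorem \ref{wavefrontprop1} (with $\mathcal B_0=\mathcal B$, i.e. $r=q$, $\vartheta=\omega$) to the compactly supported distribution $\fy_j f$ yields, for $N$ large enough,
\begin{equation*}
|\psi f|_{\mathcal B(\Gamma_j')}\le C_N\Big(|\fy_j f|_{\mathcal B(\Gamma_j)}+\sup_{\xi\in\rr d}\big(|\widehat{\fy_j f}(\xi)\omega(\xi)|\eabs\xi^{-N}\big)\Big)<\infty ,
\end{equation*}
the supremum being finite since $\widehat{\fy_j f}$ is polynomially bounded and $\omega\in\mathscr P$. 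As $\rr d\setminus 0=\bigcup_j\Gamma_j'$, summing over $j$ (taking the maximum when $q=\infty$) gives $\nm{\psi f}{\mathscr FL^q_{(\omega)}}\le\sum_j|\psi f|_{\mathcal B(\Gamma_j')}<\infty$, so $\psi f\in\mathscr FL^q_{(\omega)}$.

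Finally, the open sets $\{\psi\neq 0\}$ produced above (as $x_0$ ranges over $\supp f$) form an open cover of the compact set $\supp f$; extracting a finite subcover with associated functions $\psi_1,\dots,\psi_m$ and choosing $\chi_k\in C_0^\infty$ with $\supp\chi_k\subseteq\{\psi_k\neq 0\}$ and $\sum_k\chi_k=1$ near $\supp f$, one gets $f=\sum_k\chi_k f$ with each $\chi_k f=(\chi_k/\psi_k)(\psi_k f)\in\mathscr FL^q_{(\omega)}$ by the multiplication property, hence $f\in\mathscr FL^q_{(\omega)}=\mathcal B$. The main obstacle is precisely the use of \eqref{cuttoff1}: multiplying $\fy_j f$ by the test function $\psi/\fy_j$ does not preserve the cone $\Gamma_j$, so one must pass to the strictly smaller cone $\Gamma_j'$ and control the new semi-norm by $|\fy_j f|_{\mathcal B(\Gamma_j)}$ plus a harmless term coming from compact support — this is exactly the microlocal content of Theorem \ref{wavefrontprop1}. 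Arranging that the shrunken cones $\Gamma_j'$ still cover the sphere, and the concluding partition-of-unity gluing, are then routine.
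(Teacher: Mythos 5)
Your proof is correct, and it fills in exactly the argument the paper leaves implicit when it states this as a corollary: reduction to $\mathcal B=\mathscr FL^q_{(\omega)}$ via Theorem \ref{wavefrontsequal} and the local coincidence of the three spaces, the easy direction via Young's inequality, and the converse via a compactness/patching argument over $\mathsf S^{d-1}$ whose only nontrivial ingredient is the estimate \eqref{cuttoff1} from the proof of Theorem \ref{wavefrontprop1}. All the delicate points (passing to the strictly smaller cones $\Gamma_j'$, the quotients $\psi/\fy_j$ being in $C_0^\infty$ because $\fy_j$ is bounded away from zero on $\supp\psi$, and the final partition of unity) are handled correctly.
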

In particular, we recover Theorem 2.1 and Remark 4.4 in \cite{RSTT}.

\par

\section{Wave-front sets and pseudo-differential
operators with non-smooth symbols}\label{sec7}

\par

In this section we generalize wave-front results in Section \ref{sec3}
to pseudo-differential operators with symbols in $\mho _{(\omega
)}^{s,\rho}(\rr {2d})$ (see Definition \ref{symbolclass}).
In order to state the results we use the convention
\begin{equation}\label{conven1}
(\vartheta _1,\vartheta _2)\lesssim (\omega _1,\omega _2)
\end{equation}
when $\omega _j,\vartheta _j \in \mathscr P(\rr {2d})$ for $j=1,2$
satisfy $\vartheta _j\le C\omega _j$ for some constant $C$. We recall that
\begin{equation}\label{t1t2}
\omega _j(x,\xi _1+\xi _2)\le C\omega  _{j}(x, \xi _1)\eabs
{\xi _2} ^{t_j}, \quad j=1,2,
\end{equation}
for some positive constants $C$, $t_1$ and $ t_2 $, which are
independent of $x,\xi _1,\xi _2\in \rr d$. We let
$\omega_{s,\rho}$ be the same as in \eqref{omegasrho} and we use the
notation $\omega \preccurlyeq (\omega _1,\omega _2 )$ when
\eqref{e5.9} holds for some constant $C$.

\par

\begin{thm}\label{mainthm1}
Let $q\in [1,\infty ]$, $\omega _j,\vartheta _j\in \mathscr
P(\rr {2d})$, $\omega \in \mathscr P_{\rho ,0} (\rr {4d})$, $0<\rho
\le 1$, and let $t_j \geq 0$, $j=1,2$ be chosen such that
\eqref{conven1} and \eqref{t1t2} holds. Also let $\mathcal B=\mathscr
FL^q_{(\omega _1)}$ and $\mathcal C=\mathscr FL^q_{(\omega _2)}$.
Moreover, assume that $\omega
_{s,\rho}$ in \eqref{omegasrho} satisfy $\omega _{s,\rho}\preccurlyeq
(\omega _1,\omega _2)$ $\omega _{s,\rho}\preccurlyeq (\vartheta
_1,\vartheta _2)$ for some choices of
$$
s_1 \geq 0,\quad s_2\in \mathbf N,\quad s_3>t_1+t_2+2d, \quad
\mbox{and} \quad s_4\in \mathbf R.
$$
If $a\in \mho _{(\omega )}^{s,\rho}(\rr {2d})$ and
 $f\in M^{\infty}_{(\vartheta _1)}(\rr d)$, then
$$
\WF _{\mathcal C} (\op (a)f)\subseteq \WF _{\mathcal B} (f).
$$
\end{thm}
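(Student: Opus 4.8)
The strategy is to mimic the proof of Theorem~\ref{mainthm2}, with the elementary Fourier-transform bound \eqref{F1aest} --- which used smoothness and compact $x$-support of the symbol --- replaced by a short-time Fourier transform estimate adapted to the class $\mho_{(\omega)}^{s,\rho}$. As in Theorem~\ref{mainthm2} the statement is local, so it suffices to show that $(x_0,\xi_0)\notin\WF_{\mathcal B}(f)$ forces $(x_0,\xi_0)\notin\WF_{\mathcal C}(\op(a)f)$; by Theorem~\ref{wavefrontsequal} both wave-front sets may be computed with the $M^{\infty,q}$-semi-norms \eqref{modseminorm}, which matches the hypothesis $f\in M^\infty_{(\vartheta_1)}$. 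I would pick $\fy,\psi\in C_0^\infty(\rr d)$ with $\fy(x_0)\neq0$, $\psi\equiv1$ near $\supp\fy$, and supports so small that $|\psi f|_{\mathcal B(\Gamma_1)}<\infty$ for some open cone $\Gamma_1\ni\xi_0$, and write $f=\psi f+(1-\psi)f$. Since $\fy(x)\,\op(a)g=\op(\fy a)g$ and $\fy a\in\mho_{(\omega)}^{s,\rho}(\rr{2d})$ is compactly supported in $x$, the term $\fy\,\op(a)((1-\psi)f)=\op(\fy a)((1-\psi)f)$ has kernel supported away from the diagonal; the non-smooth analogue of Proposition~\ref{propmain1AA} then applies: integrating by parts in $\xi$ replaces $\fy a$ by $\Delta_\xi^{s_2}(\fy a)$, a sum of terms $\partial_\xi^\alpha(\fy a)$ with $|\alpha|\le2s_2$, each in a suitable weighted $M^{\infty,1}$ space by Definition~\ref{symbolclass}, and this yields enough off-diagonal decay to conclude that $(x_0,\xi_0)$ does not lie in $\WF_{\mathcal C}$ of this term.

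The substantial part is $\op(a)g$ with $g=\psi f$, which is compactly supported, lies in $M^\infty_{(\vartheta_1)}$, and satisfies $|g|_{\mathcal B(\Gamma_1)}<\infty$. Here I would establish, from Lemma~\ref{stftproperties}, the representation of $V_\phi(\op(a)g)$ in terms of $V_\Phi a$ and $V_\phi g$ for a fixed Schwartz window $\Phi$ on $\rr{2d}$, together with the weight inequalities \eqref{e5.9} encoded in $\omega_{s,\rho}\preccurlyeq(\omega_1,\omega_2)$ and $\omega_{s,\rho}\preccurlyeq(\vartheta_1,\vartheta_2)$, a convolution-type pointwise estimate of the form
\begin{gather*}
|V_\phi(\op(a)g)(x,\xi)\,\omega_2(x,\xi)|\le C\,(H*G)(x,\xi),\qquad G(y,\eta)=|V_\phi g(y,\eta)|\,\omega_1(y,\eta),
\end{gather*}
and a companion estimate with $(\omega_1,\omega_2)$ replaced by $(\vartheta_1,\vartheta_2)$, where $H\ge0$ is a fixed function built from $V_\Phi(\partial_\xi^\alpha a)$, $|\alpha|\le2s_2$. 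The point of allowing $\xi$-derivatives up to order $2s_2$ in Definition~\ref{symbolclass} is that these furnish, by integrations by parts in the frequency variables, the extra decay of $V_\Phi a$ in the position-shift variable and in $\xi$ needed to compensate for the factor $\eabs{\xi}^{\rho s_2}$ by which the $\alpha=0$ weight $\omega_{s(0),\rho}$ exceeds $\omega_{s,\rho}$; combined with the $\eabs{\zeta}^{-s_3}$ factor in \eqref{omegasrho}, this makes $H$ integrable and decaying like $\eabs{\xi-\eta}^{-s_3}$ in the frequency-shift variable. These estimates play the role of \eqref{F1aest}.

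With the convolution estimate at hand I would run the cone-splitting argument of Proposition~\ref{keyprop2AA} almost verbatim. Fix a cone $\Gamma_2$ with $\overline{\Gamma_2}\subseteq\Gamma_1$ and split the $\eta$-integration into $\eta\in\Gamma_1$ and $\eta\notin\Gamma_1$. The first part (the analogue of $J_1$) is bounded by Young's inequality together with $|g|_{\mathcal B(\Gamma_1)}<\infty$. The second part (the analogue of $J_2$) uses $|\xi-\eta|\ge c\max(|\xi|,|\eta|)$ for $\xi\in\Gamma_2$, $\eta\notin\Gamma_1$, the decay $H(x-y,\xi-\eta)\le C_N\eabs{x-y}^{-N}\eabs{\xi-\eta}^{-s_3}$, the bound $|V_\phi g(y,\eta)\vartheta_1(y,\eta)|\le C$ coming from $g\in M^\infty_{(\vartheta_1)}$, and the moderation inequalities \eqref{t1t2}; the hypothesis $s_3>t_1+t_2+2d$ is exactly what makes the resulting double integral converge. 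Combining the two parts gives $|\op(a)g|_{\mathcal C(\Gamma_2)}<\infty$, hence $(x_0,\xi_0)\notin\WF_{\mathcal C}(\op(a)g)$, and together with the off-diagonal term this gives $(x_0,\xi_0)\notin\WF_{\mathcal C}(\op(a)f)$.

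I expect the main difficulty to be the convolution estimate itself: one has to produce, from membership of the $\partial_\xi^\alpha a$ in the weighted $M^{\infty,1}$ spaces, a single nonnegative kernel $H$ with simultaneous, quantitatively sufficient decay in the position-shift and frequency-shift variables, and then verify that the two weight conditions $\omega_{s,\rho}\preccurlyeq(\omega_1,\omega_2)$ and $\omega_{s,\rho}\preccurlyeq(\vartheta_1,\vartheta_2)$ are precisely what is needed for the output weight to be absorbed on the cone $\Gamma_1$ (where $g$ is controlled in $\mathcal B$) and outside it (where $g$ is controlled only in $M^\infty_{(\vartheta_1)}$). Once this estimate is established, the remainder is a routine transcription of the proofs of Propositions~\ref{propmain1AA} and \ref{keyprop2AA}.
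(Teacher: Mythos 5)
Your proposal is essentially correct in architecture and matches the paper's skeleton: localize $f=\psi f+(1-\psi)f$, kill the off-diagonal piece by the non-smooth analogue of Proposition \ref{propmain1AA} (this is exactly Proposition \ref{propmain1} in the paper, proved by the $\Delta _\xi ^{s_2}$ integration by parts you describe -- and indeed that is the only place where the derivatives $\partial _\xi ^\alpha a$, $|\alpha |\le 2s_2$, of Definition \ref{symbolclass} are consumed, not the main estimate), then prove a cone-splitting estimate for the compactly supported piece. Where you genuinely diverge is in that main estimate. The paper never forms the short-time Fourier transform of $\op (a)g$: it writes $\mathscr F_1a(\xi ,\eta )=\iint (V_\phi a)(x,\eta ,\xi ,z)e^{i\scal z\eta}\, dxdz$ by Fourier inversion and reads off from $a\in M^{\infty ,1}_{(1/\omega _s)}$ (via the space $\widetilde M$ of Remark \ref{coorb}) the pointwise bound $|(\mathscr F_1a)(\xi -\eta ,\eta )\omega _2(\xi )|\le C\eabs {\xi -\eta}^{-s_3}\omega _1(\eta )$, which is a drop-in replacement for \eqref{F1aest}--\eqref{keyprop2AAest}; the cone-splitting is then run verbatim on $\widehat {\op (a)g}$ in the Fourier--Lebesgue semi-norms, with $F(\eta )\le C\eabs \eta ^{t_1}$ replaced by the bound coming from $f\in M^\infty _{(\vartheta _1)}\cap \mathscr E'$, and $s_3>t_1+t_2+2d$ closing the $J_2$ integral. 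Your route -- a convolution domination of $V_\phi (\op (a)g)\omega _2$ by $H*G$ and cone-splitting in the semi-norms \eqref{modseminorm}, transferring back by Theorem \ref{wavefrontsequal} -- is viable and is essentially how the continuity result Proposition \ref{p5.4} is proved; it would adapt to $p<\infty$, but it is longer, and the whole weight of your argument rests on the convolution estimate you leave unproved, whereas the paper's $\mathscr F_1a$ bound is a three-line computation.

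One concrete inaccuracy to repair if you pursue your route: the decay $H(x-y,\xi -\eta )\le C_N\eabs {x-y}^{-N}\eabs {\xi -\eta}^{-s_3}$ for arbitrary $N$ is not available from the symbol class. The weight $1/\omega _{s,\rho}$ supplies only the fixed power $\eabs z^{s_1}$ (with $s_1\ge 0$ given) in the position-shift variable, together with $L^1$-integrability of the dominating kernel in $(\zeta ,z)$; that is enough for the Young-inequality step ($J_1$) but not for the pointwise spatial decay you invoke in $J_2$. The decay needed there to make the $y$-integration converge must instead be extracted from $V_\phi g$ itself, using that $g\in \mathscr E'$ so that Lemma \ref{stftcompact} gives $|V_\phi g(y,\eta )|\le C_N\eabs y^{-N}\eabs \eta ^{N_0}$, interpolated with the bound $|V_\phi g(y,\eta )|\le C/\vartheta _1(\eta )$ from $g\in M^\infty _{(\vartheta _1)}$. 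The paper's formulation sidesteps this entirely because no position variable survives in $\widehat {\op (a)g}(\xi )=c\int (\mathscr F_1a)(\xi -\eta ,\eta )\widehat g(\eta )\, d\eta$.
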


\par

By Proposition \ref{p5.4}, it follows that $\op (a)f$ in
Theorem \ref{mainthm1} makes sense as an element in $M^\infty
_{(\vartheta _2)}(\rr {d})$, which contains each space
$M^{p,q}_{(\omega _2)}(\rr {d})$.

\par

When proving Theorem \ref{mainthm1}, we shall mainly follow the ideas
in the proof of Theorem \ref{mainthm2}, and prove some preparing
results. The first one of these results can be considered as a
generalization of Proposition \ref{propmain1AA} in the case $\delta
=0$.

\par

\begin{prop}\label{propmain1}
Let  $a\in \mho_{(\omega )}^{s,\rho}(\rr {2d})$, where
$\omega \in \mathscr P_{\rho ,0} (\rr {2d}\oplus \rr {2d})$,
$0\le s_1$, $0\le s_2\in \mathbf Z$, and let $L_a$ be the operator
from $\mathscr S(\rr d)$ to $\mathscr S'(\rr d)$ which is given by
\eqref{Ladef}. Then the following is true:
\begin{enumerate}
\item $L_a =\op (a_0)$, for some $a_0\in \mathscr S'(\rr {2d})$ such
that
\begin{equation}\label{newsymb}
\partial ^{\alpha}_{\xi} a_0(x,\xi ) \in \underset {s_4\ge
0} \ttbigcap M^{\infty ,1}_{(1/\omega _{s,\rho})}(\rr {2d}),
\end{equation}
for all multi-indices $\alpha$ such that $|\alpha |\le 2s_2$;

\vrum

\item if $p,q\in [1,\infty ]$, and $\omega _1,\omega _2\in \mathscr
P(\rr {2d})$ fulfill $\omega _{s,\rho}\preccurlyeq (\omega _1,\omega
_2)$, then the definition of $L_a$ extends uniquely to a continuous
map from $M^{p,q}_{(\omega _1)}(\rr d)$ to $M^{p,q}_{(\omega _2)}(\rr
d)$.
\end{enumerate}
\end{prop}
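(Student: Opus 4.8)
The plan is to imitate the proof of Proposition \ref{propmain1AA}, replacing the pointwise estimates on the kernel of $L_a$ by short-time Fourier transform estimates on the Kohn--Nirenberg symbol of $L_a$, in the spirit of the proof of Theorem \ref{wavefrontsequal}. First I would verify that $L_a$ is a well-defined continuous operator from $\mathscr S(\rr d)$ to $\mathscr S'(\rr d)$: multiplication by $\fy _2\in S^0_{0,0}(\rr d)$ preserves $\mathscr S(\rr d)$, $\op (a)$ maps $\mathscr S(\rr d)$ continuously into $\mathscr S'(\rr d)$ because $a\in \mathscr S'(\rr {2d})$, and multiplication by $\fy _1\in C_0^\infty (\rr d)$ lands in $\mathscr E'(\rr d)\subseteq \mathscr S'(\rr d)$. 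By the Schwartz kernel theorem recalled in Section \ref{sec1} there is then a unique $a_0\in \mathscr S'(\rr {2d})$ with $L_a=\op (a_0)$, which is the first assertion of (1).

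For the decay of $a_0$ I would write $L_a$ as the amplitude operator with amplitude $b(x,y,\xi )=\fy _1(x)\fy _2(y)a(x,\xi )$. Since $\supp \fy _1$ is compact and $\supp \fy _2$ is closed and they are disjoint, they lie at positive distance; this lets me insert a cut-off $\chi \in C^\infty (\rr d)$ vanishing near the origin and equal to one on the set of $z$ for which $(x,z)\mapsto \fy _1(x)\fy _2(x-z)$ is not identically zero. Passing from the amplitude to the Kohn--Nirenberg symbol and using the $2s_2$ available $\xi$-derivatives of $a$ to integrate by parts --- which is precisely the device converting the off-diagonal support into $\xi$-decay for $a_0$ --- one arrives, up to a fixed constant, at a representation
\[
\partial _\xi ^\alpha a_0(x,\xi )=\mathscr F_z\Big[\, m_\alpha (x,\cdot )\cdot \big (\mathscr F_2^{-1}\big ((-\Delta _\xi )^{s_2}a\big )\big )(x,\cdot )\, \Big](\xi ),\qquad |\alpha |\le 2s_2,
\]
where $m_\alpha (x,z)=\fy _1(x)\fy _2(x-z)\chi (z)(-iz)^\alpha |z|^{-2s_2}$. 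Here $(-\Delta _\xi )^{s_2}a$ lies in $M^{\infty ,1}_{(1/\omega _{s(\beta ),\rho})}(\rr {2d})$ with $|\beta |=2s_2$ by the definition of $\mho _{(\omega )}^{s,\rho}$ (a space carrying the weight factor $\eabs \xi ^{-2\rho s_2}$), and $m_\alpha $ is smooth, compactly supported in $x$ thanks to $\fy _1$, and bounded together with all its $z$-derivatives since $\eabs z^{|\alpha |-2s_2}\chi (z)$ is bounded for $|\alpha |\le 2s_2$.

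Next I would estimate $V_\Phi (\partial _\xi ^\alpha a_0)$ for a product window $\Phi =\Phi _1\otimes \Phi _2\in \mathscr S(\rr {2d})$, justifying the formal manipulations by first taking $a\in \mathscr S(\rr {2d})$ and passing to the limit. The partial Fourier transform $\mathscr F_2$ (and its inverse) intertwines short-time Fourier transforms on the phase space over $\rr {2d}$ up to a permutation of the position/frequency slots and a unimodular factor, while the pointwise product $m_\alpha (x,\cdot )\cdot (\cdots )$ passes, on the transform side, to a convolution in the frequency variables (exactly as in the proofs of \eqref{cuttoff1} and of Theorem \ref{wavefrontsequal}). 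This bounds $|V_\Phi (\partial _\xi ^\alpha a_0)|$ by a convolution over $\rr {4d}$ of the modulus of a short-time Fourier transform of $(-\Delta _\xi )^{s_2}a$ with a short-time Fourier transform of $m_\alpha $. One then bounds the $M^{\infty ,1}_{(1/\omega _{s,\rho})}$-norm of $\partial _\xi ^\alpha a_0$, for arbitrary $s_4\ge 0$, just as in the proof of \eqref{cuttoff1}: Young's inequality absorbs the convolution; the loss $\eabs x^{s_4}$ is absorbed by the compact $x$-support of $\fy _1$, via Lemma \ref{stftcompact} applied to the short-time Fourier transform of $m_\alpha $; the decay in the frequency variables beyond $\eabs \zeta ^{-s_3}$ and $\eabs z^{-s_1}$ comes from the Schwartz decay of the short-time Fourier transform of $m_\alpha $ together with the decay built into $(-\Delta _\xi )^{s_2}a\in M^{\infty ,1}_{(1/\omega _{s(\beta ),\rho})}$; and the factor $\eabs \xi ^{-\rho s_2}$ is produced by combining the $\eabs \xi ^{-2\rho s_2}$-weight on $(-\Delta _\xi )^{s_2}a$, the $\eabs z^{-2s_2}$-type decay of $m_\alpha $, and the $\mathscr P_{\rho ,0}$-structure of $\omega$. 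The moderateness of the weights is used throughout to move weight factors between the two factors of the convolution. This gives \eqref{newsymb}.

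For (2), I would take $\alpha =0$ in \eqref{newsymb}, so that $a_0\in M^{\infty ,1}_{(1/\omega _{s,\rho})}(\rr {2d})$, and apply Proposition \ref{p5.4} with $p=\infty $ and $q=1$: the relation $1/p_1-1/p_2=1/q_1-1/q_2=1-1/p-1/q$ then forces $p_1=p_2$ and $q_1=q_2$, the side condition $q\le p_2$, $q_2\le p$ is automatic, and the inequality \eqref{e5.9} reads precisely $\omega _{s,\rho}\preccurlyeq (\omega _1,\omega _2)$; hence $L_a=\op (a_0)$ extends to a continuous map from $M^{p,q}_{(\omega _1)}(\rr d)$ to $M^{p,q}_{(\omega _2)}(\rr d)$ for every $p,q\in [1,\infty ]$, uniqueness of the extension following from the density statements in Proposition \ref{p1.4}. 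I expect the third step to be the main obstacle: faithfully propagating the four weight factors of $\omega _{s,\rho}$ through the two partial Fourier transforms and the convolution --- in particular extracting the gain $\eabs \xi ^{-\rho s_2}$, which $a$ itself does not possess --- while relying only on the modulation-space regularity of the (possibly non-smooth) symbol $a$ in the position directions, in place of the pointwise derivative bounds that were available in Proposition \ref{propmain1AA}.
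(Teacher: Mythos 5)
Your proposal is correct in outline and follows the same overall strategy as the paper: both start from the integrated-by-parts amplitude $b_s$ of \eqref{bsformula} inherited from Proposition \ref{propmain1AA}, convert that amplitude into the Kohn--Nirenberg symbol $a_0$, track the four weight factors, and then deduce (2) from (1) via Proposition \ref{p5.4} exactly as you do. Where you genuinely diverge is in the execution of the amplitude-to-symbol conversion. The paper places $b_s$ in a weighted $M^{\infty ,1}$ space over $\rr {3d}$ (Proposition 4.3 in \cite{RSTT}), applies the symbol-change operator $e^{i\scal {D_\xi}{D_y}}$ using its continuity on modulation spaces (Proposition 1.7 in \cite{Toft4}), and finally restricts to the diagonal $y=x$ by a trace theorem (Theorem 3.2 in \cite{To8}), which forces it to verify a $\sup$-$\inf$ condition on the auxiliary weight $\kappa$. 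You instead exploit the fact that $b_s(x,y,\xi )$ splits as a cut-off in $(x,y)$ times $(-\Delta _\xi )^{s_2}a_1(x,\xi )$, so the composite of these two operations collapses to the closed formula $\partial _\xi ^\alpha a_0=\mathscr F_z\big [m_\alpha (x,\cdo )\, (\mathscr F_2^{-1}(-\Delta _\xi )^{s_2}a)(x,\cdo )\big ]$ --- which is correct and removes the diagonal-restriction step entirely --- at the price of carrying out the STFT convolution estimate by hand rather than quoting the cited results. Your weight bookkeeping is consistent: $(-\Delta _\xi )^{s_2}a$ carries the decay $\eabs \xi ^{-2\rho s_2}$, which exceeds the target $\eabs \xi ^{-\rho s_2}$ in \eqref{newsymb}, and the compact $x$-support of $\fy _1$ absorbs $\eabs x^{s_4}$ for every $s_4\ge 0$ via a Lemma \ref{stftcompact}-type bound. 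One small caution: $m_\alpha (x,\cdo )$ is \emph{not} rapidly decreasing in $z$ when $|\alpha |=2s_2$ (it is only bounded with bounded derivatives), so its STFT decays rapidly in the frequency slots and in the $x$-position slot but not in the $z$-position slot; this is harmless, since $\omega _{s,\rho}$ demands no extra decay there beyond what $(-\Delta _\xi )^{s_2}a$ already supplies --- this is the same allowance the paper builds into $\nu _{s,t,\rho}$ through the factor $\eabs {x-y}^{2s_2}$ --- but the phrase ``Schwartz decay of the short-time Fourier transform of $m_\alpha $'' should be weakened accordingly. Both routes are sound; yours is more self-contained, the paper's is shorter on the page.
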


\par

\begin{proof}
We use the same notations as in the proof of Proposition
\ref{propmain1AA}, with the difference that $s=(s_1,s_2,s_3,s_4)\in
\rr 4$.

\par

(1) From the proof of Proposition \ref{propmain1AA} it follows that
$L_a=\op (b_s)$, where $b_s$ is given by \eqref{bsformula}. By the
support properties of $\fy _1$, $\fy _2$ and $\fy$, it follows from
Proposition 4.3 in \cite{RSTT} that
\begin{align*}
b_s&\in M^{\infty ,1}_{(\nu _{s,t,\rho })}(\rr {3d}),\quad \mbox{for
every}\quad s_4,t\ge 0,
\intertext{where}
\nu _{s,t,\rho } (x,y,\xi ,\zeta ,\eta ,z) &= \omega _{s,\rho}(x,\xi
,\zeta ,z)^{-1}\kappa (x,y,\xi ,\zeta ,\eta ,z)
\\[1ex]
&=\omega (x,\xi ,\zeta ,z)^{-1}\eabs x^{s_4}\eabs \zeta ^{s_3}\eabs
{x-y}^{2s_2}\eabs \xi ^{\rho s_2}\eabs \eta ^{t}\eabs z^{s_1}.
\end{align*}
Here
$$
\kappa (x,y,\zeta ,\xi ,\eta ,z) = \eabs {z-y}^{2s_2}\eabs \eta ^t
$$

\par

In view of Sections 18.1 and 18.2 in \cite {Ho1} it follows that $\op
(b_s)=\op (a_0)$ when
$$
c_s(x,y,\xi )=e^{i\scal {D_\xi}{D_y}}b_s(x,y,\xi )\quad \text{and}\quad
a_0(x,\xi )=c_s(x,x,\xi ).
$$
We have to prove that $a_0$ is well-defined and fulfills
\eqref{newsymb} when $|\alpha |\le 2s_2$. By Proposition
1.7 in \cite{Toft4} we have
\begin{equation} \label{csts4}
c_s \in M^{\infty ,1}_{(\widetilde \nu _{s,t,\rho })}(\rr {3d}),\quad
\mbox{for every}\quad s_4,t\ge 0,
\end{equation}
where
\begin{gather*}
\widetilde
\nu_{s,t,\rho}(x,y,\xi ,\zeta ,\eta ,z) =
\nu_{s,t,\rho}(x,y-z,\xi -\eta ,\zeta ,\eta ,z)
\\[1ex]
= \omega (x,\xi -\eta ,\zeta ,z)^{-1}\eabs x^{s_4}\eabs \zeta ^{s_3}\eabs
{y-z-x}^{2s_2} \eabs {\xi -\eta }^{2\rho s_2}\eabs \eta ^t\eabs z^{s_1}
\end{gather*}
Since $\omega \in \mathscr P$, the
right-hand side can be estimated by
$$
C\omega _{s,\rho}(x,\xi ,\zeta ,z)^{-1}\eabs
{y-z}^{2s_2} \eabs \eta ^t = C\omega _{s,\rho}(x,\xi ,\zeta
,z)^{-1}\kappa (x,y,\xi ,\zeta ,\eta ,z),
$$
for some constant $C$, provided $s_4$ and $t$ have been replaced by
larger constants if necessary. Since \eqref{csts4} holds for any
$s_4\ge 0$ and $t\ge 0$ we get
$$
c_s \in M^{\infty ,1}_{(1/\omega _{s,t,\rho })}(\rr {3d}),\quad \mbox{for
every}\quad s_4,t\ge 0,
$$
where
$$
\omega _{s,t,\rho } (x,y,\xi ,\zeta ,\eta ,z)
= \omega _{s,\rho} (x,\xi ,\zeta ,z)/\kappa (x,y,\xi ,\zeta ,\eta ,z).
$$

\par

From the fact that
$$
\sup _{z,\eta }\Big ( \big ( \inf _{x,\zeta} \kappa (x,y,\xi ,\zeta
,\eta ,z) \, \big )^{-1}\Big )= 1<\infty ,
$$
it follows now by  Theorem 3.2 in \cite{To8} that $a_0(x,\xi
)=c_s(x,x,\xi )$ is well-defined and belongs to $M^{\infty
,1}_{(1/\omega _{s,\rho})}(\rr {2d})$, for each $s_4\ge 0$. This
proves \eqref{newsymb} in the case $\alpha =0$.

\par

If we let $b_{s,\alpha }$ for $|\alpha |\le 2s_2$ here above be
defined by
$$
b_{s,\alpha }(x,y,\xi ) = \partial ^\alpha _\xi a_0(x,\xi )\fy_1(x)\fy
_2(y),
$$
then
$$
\op (\partial ^\alpha a_0) = \op (b_{s,\alpha}),\quad |\alpha |\le
2s_2.
$$
By similar arguments as in the first part of the proof it follows that
$\partial ^\alpha a_0 \in M^{\infty ,1}_{(1/\omega _{s,\rho})}(\rr
{2d})$, for each $s_4\ge 0$.
The details are left for the reader. This proves (1), and the
assertion (2) is an immediate consequence of (1) and Proposition
\ref{p5.4}. The proof is complete.
\end{proof}

\medspace

Next we consider properties of the wave-front set of $\op (a)f$ at a
fixed point when $f$ is concentrated to that point. In these
considerations it is natural to assume that involved weight functions
satisfy
\begin{equation}\label{weightcond3}
\begin{alignedat}{2}
\omega _j(x,\xi ) &=\omega _j(\xi ),&\qquad \vartheta
_j(x,\xi ) &=\vartheta _j(\xi ),\quad j=1,2,
\\[1ex]
\omega (0,\xi ,\zeta ,z) &\le C\frac {\omega _1(\xi +\zeta )}{\omega
_2(\xi )},& \qquad \omega (0,\xi ,\zeta ,z)&\le C\frac {\vartheta
_1(\xi +\zeta )}{\vartheta _2(\xi )},
\end{alignedat}
\end{equation}
and we set
\begin{equation}\label{extweight1}
\omega _s(x,\xi, \zeta ,z) = \eabs x^{-s_4}\eabs \zeta ^{-s_3}\omega
(0,\xi ,\zeta ,z), \quad s \in \rr 4.
\end{equation}
We also note that
\begin{equation} \label{extweight4}
\omega _j(\xi _1+\xi _2)\le C\omega  _{j}(\xi _1)\eabs
{\xi _2} ^{t_j},\quad j=1,2,
\end{equation}
for some real numbers $t_1$ and $t_2$.

\par

\begin{prop}\label{keyprop2}
Let $q\in [1,\infty]$, and let $\omega _s\in \mathscr P(\rr{4d})$,
$\omega \in \mathscr P(\rr {3d})$, $\omega _{j},\vartheta _{j}\in
\mathscr P(\rr d)$, $j=1,2$,  fulfill $(\vartheta _{1},\vartheta
_{2})\lesssim (\omega _1,\omega _2)$,
\eqref{weightcond3}--\eqref{extweight4}, $s_4>d$ and
$$
s_3>t_1+t_2+2d.
$$
If  $a\in M^{\infty ,1}_{(1/\omega
_s)}(\rr {2d})$ and $f\in M^\infty _{(\vartheta _{1})}(\rr d)\bigcap
\mathscr E'(\rr d)$ then {\rm{(1)}} and {\rm{(2)}} in Proposition
\ref{keyprop2AA} holds for $\mathcal B=\mathscr FL^q_{(\omega _1)}$ and $\mathcal
C=\mathscr FL^q_{(\omega _2)}$.
\end{prop}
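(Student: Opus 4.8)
The plan is to mimic the proof of Proposition \ref{keyprop2AA} as closely as possible, replacing Hörmander-type symbol estimates by modulation-space estimates, and replacing the identity $\mathscr F(\op(a)f)(\xi)=(2\pi)^{-d/2}\int(\mathscr F_1a)(\xi-\eta,\eta)\widehat f(\eta)\,d\eta$ by its short-time Fourier transform analogue. First I would fix $\phi\in\mathscr S(\rr d)\setminus 0$ with compact support (allowed by Theorem \ref{wavefrontsequal}), and recall from Lemma \ref{stftcompact} that since $f$ has compact support $|V_\phi f(y,\eta)|\le C_M\eabs y^{-M}\eabs\eta^{N_0}$ for every $M\ge 0$; in particular $f\in M^\infty_{(\vartheta_1)}$ gives $|V_\phi f(y,\eta)|\le C\eabs y^{-M}\vartheta_1(\eta)^{-1}$ for every $M$. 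The key computation is an expression for $V_\phi(\op(a)f)(x,\xi)$ as an absolutely convergent integral. Since $a\in M^{\infty,1}_{(1/\omega_s)}(\rr{2d})$, one has a pointwise bound on $V_\Phi a$ for suitable window $\Phi$ on $\rr{2d}$, and, after writing $\op(a)f$ in terms of the twisted convolution $V_\phi(\op(a)f)=V_\Phi a\,\widehat *\,(\text{something built from }V_\phi f)$ — the same device used in \cite{Toft4} and in the proof of Proposition \ref{p5.4} — one obtains, for each $M\ge 0$, an estimate of the form
\begin{equation*}
|V_\phi(\op(a)f)(x,\xi)\,\omega_2(\xi)|\le C_M\eabs x^{-M}\int_{\rr d}\eabs{\xi-\eta}^{-M}\,|V_\phi f(y_0,\eta)\vartheta_1(\eta)|\,d\eta
\end{equation*}
after exploiting \eqref{weightcond3} to trade the two weights $\omega_2(\xi)$ and $\omega_1(\xi+\zeta)$ against $\omega(0,\cdot)$, and absorbing the polynomial factors from \eqref{extweight4} and from the $\eabs\zeta^{-s_3}$, $\eabs x^{-s_4}$ decay in $\omega_s$ into the rapidly decaying kernel; here the hypotheses $s_4>d$ and $s_3>t_1+t_2+2d$ are exactly what is needed so that the $z$- and $\zeta$-integrations in the twisted convolution converge and leave a clean $\eabs{\xi-\eta}^{-M}$ kernel. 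This is the analogue of \eqref{keyprop2AAest}.

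With this estimate in hand, part (1) follows verbatim from the argument in Proposition \ref{keyprop2AA}. Assume $\Gamma_1,\Gamma_2$ are open cones with $\overline{\Gamma_2}\subseteq\Gamma_1$ and $|f|_{\mathcal B(\Gamma_1)}<\infty$, i.e. $\chi_{\Gamma_1}\widehat f\,\omega_1\in L^q$ (equivalently $\chi_{\Gamma_1}(\eta)|V_\phi f(y,\eta)\vartheta_1(\eta)|$ is, for $y$ in the relevant compact set, $L^q$ in $\eta$, up to the equivalence of Fourier–Lebesgue and modulation-space localizations from Remark 4.4 in \cite{RSTT} and Theorem \ref{wavefrontsequal}). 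Using Hölder's inequality exactly as in \eqref{estpseudo}, then splitting $\int_{\rr d}d\eta=\int_{\Gamma_1}+\int_{\complement\Gamma_1}$ and setting
\begin{equation*}
J_1=\Big(\int_{\Gamma_2}\int_{\Gamma_1}\eabs{\xi-\eta}^{-N}F(\eta)^q\,d\eta\,d\xi\Big)^{1/q},\qquad
J_2=\Big(\int_{\Gamma_2}\int_{\complement\Gamma_1}\eabs{\xi-\eta}^{-N}F(\eta)^q\,d\eta\,d\xi\Big)^{1/q},
\end{equation*}
with $F(\eta)=|\widehat f(\eta)\omega_1(\eta)|$, one estimates $J_1$ by Young's inequality and $J_1\le C|f|_{\mathcal B(\Gamma_1)}<\infty$, while for $J_2$ the cone separation gives $|\xi-\eta|\ge c\max(|\xi|,|\eta|)$ for $\xi\in\Gamma_2$, $\eta\in\complement\Gamma_1$, and since $F(\eta)\le C\eabs\eta^{t_1}$ (compact support of $f$) one gets $J_2<\infty$ by choosing $N$ large; together with the $\eabs x^{-M}$ decay, which makes the $x$-integral in the $M^{p,q}$ (or $W^{p,q}$) localization seminorm converge, this yields $|\op(a)f|_{\mathcal C(\Gamma_2)}<\infty$. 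Part (2) is then immediate from (1) and the definitions, just as in Proposition \ref{keyprop2AA}.

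The main obstacle I expect is the bookkeeping in the first paragraph: getting from $a\in M^{\infty,1}_{(1/\omega_s)}$ and $f\in M^\infty_{(\vartheta_1)}\cap\mathscr E'$ to the clean kernel estimate with a rapidly decaying $\eabs{\xi-\eta}^{-M}$ and the correct weights, keeping precise track of which of the four slots $(x,\xi,\zeta,z)$ each polynomial factor lives in, and verifying that the conditions $s_4>d$, $s_3>t_1+t_2+2d$ together with \eqref{weightcond3}--\eqref{extweight4} and $(\vartheta_1,\vartheta_2)\lesssim(\omega_1,\omega_2)$ suffice to close every integral. This is where the twisted convolution identity (Lemma \ref{stftproperties}), the composition behaviour of short-time Fourier transforms under $\op(a)$ used in Proposition \ref{p5.4}, and the compact-support decay from Lemma \ref{stftcompact} all have to be combined carefully; once that estimate is established, the remaining cone-decomposition argument is routine and parallels Proposition \ref{keyprop2AA} line by line.
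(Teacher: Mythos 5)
Your plan diverges from the paper's proof at the crucial first step, and as written it contains a genuine gap. The paper does \emph{not} pass to $V_\phi(\op(a)f)$ and the twisted-convolution composition formula; it stays entirely on the Fourier side, keeping the identity \eqref{pseudoreform} from Proposition \ref{keyprop2AA} and replacing only the symbol estimate: choosing $\phi=\phi_1\otimes\phi_2$ with $\int\phi_1=1$ and $\phi_2(0)=(2\pi)^{-d/2}$, one writes $\mathscr F_1a(\xi,\eta)=\iint (V_\phi a)(x,\eta,\xi,z)e^{i\scal z\eta}\,dxdz$ and uses the embedding $M^{\infty,1}_{(1/\omega_s)}\subseteq\widetilde M_{(1/\omega_s)}$ of Remark \ref{coorb} (the $L^1$-integration in $z$ comes from the norm, the $x$-integration from $\eabs x^{-s_4}$ with $s_4>d$), together with \eqref{weightcond3} and \eqref{extweight4}, to get $|(\mathscr F_1a)(\xi-\eta,\eta)\,\omega_2(\xi)|\le C\nm a{M^{\infty,1}_{(1/\omega_s)}}\eabs{\xi-\eta}^{-s_3+t_1+t_2}\omega_1(\eta)$. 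This is the replacement for \eqref{keyprop2AAest}; no appeal to Theorem \ref{wavefrontsequal} is needed, which also spares you the cone-nesting bookkeeping that your translation between Fourier--Lebesgue and modulation seminorms would require for the quantitative statement (1).

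The gap: your key estimate claims decay $\eabs x^{-M}\eabs{\xi-\eta}^{-M}$ \emph{for every} $M\ge 0$. That is not available here. The symbol lies in $M^{\infty,1}_{(1/\omega_s)}$ for one fixed $s$, so the only decay transversal to the diagonal is the fixed rate $\eabs{\xi-\eta}^{-s_3}$ (reduced to $-s_3+t_1+t_2$ after trading $\omega_2(\xi)$ for $\omega_1(\eta)$), and likewise only $\eabs x^{-s_4}$ in $x$; this is precisely the difference from Proposition \ref{keyprop2AA}, where smoothness and compact $x$-support of $a$ give \eqref{F1aest} for all $N$. Consequently your treatment of $J_2$ --- ``$F(\eta)\le C\eabs\eta^{t_1}$ by compact support, so choose $N$ large'' --- breaks down: $N$ is capped at $s_3-t_1-t_2$, which the hypothesis only guarantees to exceed $2d$, while the polynomial order of $\widehat f$ for a general $f\in\mathscr E'$ can be arbitrarily large. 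This is exactly where the unused hypothesis $f\in M^\infty_{(\vartheta_1)}$ and the second weight condition in \eqref{weightcond3} for the pair $(\vartheta_1,\vartheta_2)$ must enter, to control the off-cone contribution with only the fixed rate $s_3-t_1-t_2>2d$ available. Your plan lists these hypotheses among the ingredients but never deploys them, so the argument does not close as stated.
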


\par

\begin{proof}
As for the proof of Proposition \ref{keyprop2AA}, we only prove the
result for $q<\infty$. The slight modifications to
the case $q=\infty$ are left for the reader. We also use similar
notations as in the proof of Proposition \ref{keyprop2AA}.

\par

Let $\phi _1,\phi _2\in C_0^\infty (\rr d)$ be such that
$$
\int _{\rr {d}} \phi _1(x)\, dx =1,\quad \phi _2(0)=(2\pi )^{-d/2},
$$
and set $\phi =\phi _1\otimes \phi _2$. It follows from Fourier's inversion
formula that
\begin{equation*}
\mathscr F_1a(\xi ,\eta ) = \iint (V_\phi a)(x,\eta ,\xi
,z)e^{i\scal z\eta }\, dxdz .
\end{equation*}
By Remark \ref{coorb} it follows that
\begin{multline*}
|(\mathscr F_1a)(\xi -\eta, \eta )\omega _2(\xi )| \le C_1\iint
|(V_\phi a)(x,\eta, \xi -\eta  ,z)\omega _2(\xi )|\, dxdz
\\[1ex]
\le C_2 \iint \big ( \sup _{\eta }  \big ( \sup _{x,\xi \in \rr d}|V_\phi
a(x,\xi ,\zeta ,z)\, \omega (x,\xi ,\zeta, z)^{-1}| \big ) \big )
\times
\\[1ex]
\times
\eabs x^{-s_4}\eabs{\xi- \eta} ^{-s_3}\omega (0, \eta, \xi
-\eta , z )\omega _2(\xi )\, dxdz
\\[1ex]
\le C_{3} \nm a{\widetilde M_{(1/\omega _s)}}\eabs{\xi- \eta}
^{-s_3} (\sup _{z} \omega (0,\eta, \xi -\eta , z))\omega _2(\xi )
\\[1ex]
\le C_4 \nm {a}{M^{\infty ,1}_{(1/\omega _s)}}  \eabs{\xi - \eta} ^{-s_3}
(\sup _{z} \omega (0, \eta ,\xi -\eta , z))\omega _2(\xi )
\\[1ex]
\le C_5 \nm {a}{M^{\infty ,1}_{(1/\omega _s)}}\eabs{\xi - \eta}
^{-s_3}\omega _1(\eta ),
\end{multline*}
for some constants $C_1,\dots ,C_5$. The result now follows by similar
arguments as in the proof of Proposition \ref{keyprop2AA}, after
replacing \eqref{keyprop2AAest} with the latter estimates. The details
are left for the reader, and the proof is complete.
\end{proof}

\par

\begin{proof}[Proof of Theorem \ref{mainthm1}]
Let $\fy \in C_0^\infty (\rr d)$ be such that $\fy =1$
in a neighborhood of $x_0$, and let $\fy _2=1-\fy$.
If $(x_0,\xi _0)\notin \WF  _{\mathcal B}(f)$,
then it
follows from Proposition \ref{propmain1} that
$$
(x_0,\xi _0)\notin \WF  _{\mathcal C}(\op (a)(\fy
_2f)).
$$
Since
$$
\WF  _{\mathcal C}(\op (a)f)
\subseteq \WF  _{\mathcal
C}(\op (a)(\fy f))\ttbigcup \WF  _{\mathcal
C}(\op (a)(\fy _2f)),
$$
the result follows if we prove that
\begin{equation}\label{keystep1}
(x_0,\xi _0)\notin \WF  _{\mathcal C}(\op (a_0)(\fy f)),
\end{equation}
where $a_0(x,\xi )=\fy (x)a(x,\xi )$.

\par

We may assume that $\omega (x,\xi ,\zeta ,z)$, $\omega _j(x,\xi )$ and
$\vartheta _j(x,\xi )$ are independent of the $x$ variable when
proving \eqref{keystep1}, since both $\fy f$ and $a_0$ are compactly
supported with respect to the $x$ variable. The
result is now an immediate consequence of Proposition
\ref{keyprop2}.
\end{proof}

\par

\begin{rem}\label{otherpseudoBB}
Let  $\mho ^{s,\rho ,t}_{(\omega )}(\rr
{2d})$ be as $\mho ^{s,\rho }_{(\omega )}(\rr {2d})$, after
$\omega _{s,\rho}(x,\xi ,\zeta ,z)$ has been replaced by
$$
\omega _{s,t,\rho} (x,\xi ,\zeta ,z) = \omega _{s,\rho}(x+tz,\xi
+t\zeta ,\zeta ,z), \quad t\in \mathbf R,
$$
in the definition of $\mho ^{s,\rho }_{(\omega )}(\rr {2d})$. Then it
follows from Proposition 1.7 in \cite{Toft4} that if $a\in \mho
^{s,\rho }_{(\omega )}(\rr {2d})$, then Theorem \ref{mainthm1}
remains valid after $\omega (x,\xi ,\zeta ,z)$ has been replaced
by $\omega (x+tz,\xi +t\zeta ,\zeta ,z)$ and $\op (a)$ has been
replaced by $\op _t(a)$.
\end{rem}

\vspace{1cm}


\begin{thebibliography}{150}


\bibitem{BL} {J. Bergh and J. L{\"o}fstr{\"o}m}
\emph{Interpolation Spaces, An Introduction}, Springer-Verlag,
{Berlin}, 1976.

\bibitem{BBR} P. Boggiatto, E. Buzano, L. Rodino \emph{Global
Hypoellipticity and Spectral Theory},  Mathematical Research, 92,
Akademie Verlag, Berlin, 1996.

\bibitem{CG}
E.~{C}ordero, K.~{G}r{\"o}chenig,
\emph{{T}ime-frequency analysis of localization operators},
 {J}. {F}unct. {A}nal., \textbf{205(1)} (2003), 107--131.


\bibitem{Czaja} W. Czaja, Z. Rzeszotnik
\emph{Pseudodifferential operators and Gabor frames: spectral
asymptotics}, {Math. Nachr.} \textbf{233-234} (2002), 77--88.

\bibitem{F1}  H.~G.~Feichtinger \emph{Modulation spaces on locally
compact abelian groups. Technical report}, {University of
Vienna}, Vienna, 1983; also in: M. Krishna, R. Radha,
S. Thangavelu (Eds) Wavelets and their applications, Allied
Publishers Private Limited, New Dehli, 2003, pp. 99--140.

\bibitem{Feichtinger6} \bysame \emph{Modulation spaces: Looking back
and ahead},
Sampl. Theory Signal Image Process., \textbf{5} (2006), 109--140.

\bibitem{Feichtinger3}  {H. G. Feichtinger and K. H. Gr{\"o}chenig}
\emph{Banach spaces related to integrable group representations and
their atomic decompositions, I}, J. Funct. Anal., \textbf{86}
(1989), 307--340.

\bibitem{Feichtinger4} \bysame \emph{Banach spaces related to
integrable group representations and their atomic decompositions, II},
Monatsh. Math., \textbf{108} (1989), 129--148.

\bibitem{Feichtinger5} \bysame \emph{Gabor frames and time-frequency
analysis of distributions}, {J. Funct.
Anal.,} \textbf {146} (1997), 464--495.


\bibitem{Fo}  {G. B. Folland} \emph
{Harmonic analysis in phase space}, {Princeton U. P., Princeton},
1989.


\bibitem{Grochenig0a} {K. H. Gr{\"o}chenig} \emph {Describing
functions: atomic decompositions versus frames},
{Monatsh. Math.},\textbf{112} (1991), 1--42.

\bibitem{Gro-book} K. Gr\"{o}chenig, \newblock \textit{Foundations of
Time-Frequency Analysis},
\newblock Birkh\"auser, Boston, 2001.


\bibitem{Grochenig0}  {K. H. Gr{\"o}chenig and C. Heil} \emph
{Modulation spaces and pseudo-differential operators}, Integr.
Equat. Oper. Th., \textbf{34} (1999), 439--457.

\bibitem{Grochenig1b}  \bysame \emph {Modulation spaces as symbol
classes for pseudodifferential operators {\rm {in: M. Krishna,
R. Radha, S. Thangavelu (Eds) Wavelets and their applications}}},
Allied Publishers Private Limited, New Dehli, 2003, pp. 151--170.




\bibitem{HTW} A. Holst, J. Toft, P. Wahlberg \emph{Weyl product
algebras and modulation spaces}, {J. Funct. Anal.}, \textbf{251}
(2007), 463--491.

\bibitem{Ho1}  L. H{\"o}rmander \emph{The Analysis of Linear
Partial Differential Operators}, vol {I--III},
Springer-Verlag, Berlin, 1983, 1985.

\bibitem{Hrm-nonlin} \bysame \emph{Lectures on Nonlinear Hyperbolic
Differential Equations}, Springer-Verlag, Berlin, 1997.

\bibitem{JPTT} {K. Johansson, S. Pilipovi\'c, N. Teofanov, J. Toft},
\emph{Discrete Wave-front sets of Fourier Lebesgue
types}, preprint, 2009.



\bibitem{Pilipovic2} {S. Pilipovi\'c, N. Teofanov} \emph{On a symbol
class of Elliptic Pseudodifferential Operators}, {Bull. Cl. Sci. Math. Nat. Sci. Math.},
\textbf {27} (2002), 57--68.

\bibitem{Pilipovic3} \bysame \emph{Pseudodifferential operators on
ultra-modulation spaces}, J. Funct. Anal., \textbf{208} (2004),
194--228.

\bibitem{PTT2} {S. Pilipovi\'c, N. Teofanov, J. Toft},
\emph{Micro-local analysis in Fourier Lebesgue and modulation
spaces. Part II}, preprint, in arXiv:0805.4476, 2008.

\bibitem{RSTT} M. Ruzhansky, M. Sugimoto, N. Tomita, J. Toft
\emph{Changes of variables in modulation and Wiener amalgam spaces},
Preprint, 2008, Available at arXiv:0803.3485v1.

\bibitem{Sjostrand1}  {J. Sj{\"o}strand} \emph{An algebra of
pseudodifferential operators}, {Math. Res. Lett.}, \textbf 1 (1994),
185--192.



\bibitem{Tachizawa1}  {K. Tachizawa} \emph{The boundedness of
pseudo-differential operators on modulation spaces},
Math. Nachr., \textbf{168} (1994), 263--277.

\bibitem{Teofanov1}  {N. Teofanov} \emph{Ultramodulation spaces and
pseudodifferential operators}, {Endowment Andrejevi\'c}, Beograd,
2003.

\bibitem{Teofanov2} \bysame \emph{Modulation spaces, Gelfand-Shilov
spaces and pseudodifferential operators}, Sampl. Theory Signal
Image Process., \textbf{5} (2006), 225--242.

\bibitem{Toft2} J. Toft \emph{Continuity properties for
modulation spaces with applications to pseudo-differential calculus,
I}, {J. Funct. Anal.}, \textbf{207} (2004),
399--429.


\bibitem{To8} \bysame \emph{Continuity
properties for modulation spaces with applications to
pseudo-differential calculus, II}, {Ann. Global Anal. Geom.},
\textbf{26} (2004), 73--106.


\bibitem{Toft4} \bysame \emph{Continuity and Schatten
properties for pseudo-differential operators on modulation spaces {\rm
{in: J. Toft, M. W. Wong, H. Zhu (Eds) Modern Trends in
Pseudo-Differential Operators,}}} Operator Theory: Advances and
Applications \textbf{172}, Birkh{\"a}user Verlag, Basel, 2007,
pp. 173--206.

\end{thebibliography}
\end{document}